\def \wideubar{\underaccent{{\cc@style\underline{\mskip15mu}}}}
\def \widebar{\accentset{{\cc@style\underline{\mskip10mu}}}}
\definecolor{blue}{rgb}{0,0,0.9}
\definecolor{red}{rgb}{0.9,0,0}
\definecolor{green}{rgb}{0,0.9,0}
\definecolor{brown}{rgb}{0.6,0.1,0.1}
\definecolor{lightgreen}{rgb}{0.1,0.5,0.1}
\newcommand{\red}[1]{\begin{color}{red}#1\end{color}}
\begin{document}

\newtheorem{property}{Property}[section]
\newtheorem{proposition}{Proposition}[section]
\newtheorem{append}{Appendix}[section]
\newtheorem{definition}{Definition}[section]
\newtheorem{lemma}{Lemma}[section]
\newtheorem{corollary}{Corollary}[section]
\newtheorem{theorem}{Theorem}[section]
\newtheorem{remark}{Remark}[section]
\newtheorem{problem}{Problem}[section]
\newtheorem{example}{Example}
\newtheorem{assumption}{Assumption}
\renewcommand*{\theassumption}{\Alph{assumption}}

\renewcommand{\vartheta}{\widehat{\theta}} 
\def\myfbox#1{\fbox{\begin{minipage}{15cm}\red{#1}\end{minipage}}\\ }

\title{Inexact Bregman Proximal Gradient Method and its Inertial Variant with Absolute and Partial Relative Stopping Criteria}

\author{
Lei Yang\thanks{School of Computer Science and Engineering, and Guangdong Province Key Laboratory of Computational Science, Sun Yat-sen University ({\tt yanglei39@mail.sysu.edu.cn}). The research of this author is supported in part by the National Natural Science Foundation of China under grant 12301411, the Natural Science Foundation of Guangdong under grant 2023A1515012026, and the Basic and Applied Basic Research Foundation of Guangzhou under grant 2024A04J4184.
},
\quad
Kim-Chuan Toh\thanks{Department of Mathematics, and Institute of Operations Research and Analytics, National University of Singapore, Singapore 119076 ({\tt mattohkc@nus.edu.sg}). The research of this author is supported in part by the Ministry of Education, Singapore, under Academic Research Fund Tier 1 Grant A00084930000.
}
}	

\maketitle

\begin{abstract}
The Bregman proximal gradient  method (BPGM), which uses the Bregman distance as a proximity measure in the iterative scheme, has recently been re-developed for minimizing convex composite problems \textit{without} the global Lipschitz gradient continuity assumption. This makes the BPGM appealing for a wide range of applications, and hence it has received growing attention in recent years. However, most existing convergence results are only obtained under the assumption that the involved subproblems are solved \textit{exactly}, which is unrealistic in many applications and limits the applicability of the BPGM. To make the BPGM implementable and practical, in this paper, we develop inexact versions of the BPGM (denoted by iBPGM) by employing either an absolute-type stopping criterion or a partial relative-type stopping criterion for solving the subproblems. The $\mathcal{O}(1/k)$ convergence rate and the convergence of the sequence are also established for our iBPGM under some conditions. Moreover, we develop an inertial variant of our iBPGM (denoted by v-iBPGM) and establish the $\mathcal{O}(1/k^{\gamma})$ convergence rate, where $\gamma\geq1$ is a restricted relative smoothness exponent depending on the smooth function in the objective and the kernel function. Specially, when the smooth function in the objective has a Lipschitz continuous gradient and the kernel function is strongly convex, we have $\gamma=2$ and thus the v-iBPGM improves the convergence rate of the iBPGM from $\mathcal{O}(1/k)$ to $\mathcal{O}(1/k^2)$, in accordance with the existing results on the exact accelerated BPGM. Finally, some preliminary numerical experiments for solving the discrete quadratic regularized optimal transport problem are conducted to illustrate the convergence behaviors of our iBPGM and v-iBPGM under different inexactness settings.

\vspace{5mm}
\noindent {\bf Keywords:}~~Proximal gradient method; Bregman distance; relative smoothness; inexact stopping criteria; Nesterov's acceleration.


\end{abstract}

\section{Introduction}

In this paper, we consider the following convex composite problem:
\begin{equation}\label{mainpro}
\min\limits_{\bm{x}\in\mathcal{Q}}~F(\bm{x}) := P(\bm{x}) + f(\bm{x}),
\end{equation}
where $\mathcal{Q}\subseteq\mathbb{E}$ is a closed convex set with nonempty interior denoted by $\mathrm{int}\,\mathcal{Q}$, and $\mathbb{E}$ is a real finite dimensional Euclidean space equipped with an inner product $\langle\cdot,\cdot\rangle$ and its induced norm $\|\cdot\|$. The functions $f,\,P: \mathbb{E} \rightarrow (-\infty,\infty]$ are proper closed convex functions and $f$ is differentiable on $\mathrm{int}\,\mathcal{Q}$. Problem \eqref{mainpro} is a generic optimization problem that arises in many application areas such as machine learning, data science, image/signal processing. It has been extensively studied in the literature; see, for example, \cite{bbt2017descent,bt2009a,cd2015convergence, hrx2018accelerated,jst2012inexact,n2013gradient,srb2011convergence,t2018simplified, t2010approximation}.

A classical method for solving problem \eqref{mainpro} is the proximal gradient  method (PGM) \cite{fm1981a,lm1979splitting} (also known as the forward-backward splitting method), whose basic iterative step is given by
\begin{equation*}
\bm{x}^{k+1}=\mathop{\mathrm{argmin}}\limits_{\bm{x}\in\mathcal{Q}} \left\{P(\bm{x}) + \langle \nabla f(\bm{x}^k), \,\bm{x}-\bm{x}^k\rangle + \frac{L}{2}\|\bm{x}-\bm{x}^k\|^2\right\},
\end{equation*}
where $L>0$ is a constant depending on the Lipschitz constant of $\nabla f$. This scheme is developed based on the construction of a quadratic upper approximation of the smooth part $f$. Therefore, a central assumption required in the development and analysis of the PGM is that $\nabla f$ is Lipschitz continuous on $Q$. Moreover, one can consider the Bregman distance $\mathcal{D}_{\phi}$ associated with a kernel function $\phi$ as a proximity measure (see next section for definition), and naturally generalize the PGM to the Bregman proximal gradient method (BPGM)  (see, for example, \cite{at2006interior,bbt2017descent,lu2018relatively,t2008on,t2010approximation}), whose basic iterative step reads as follows:
\begin{equation}\label{bpgscheme}
\bm{x}^{k+1} = \mathop{\mathrm{argmin}}\limits_{\bm{x}\in\mathcal{Q}} \left\{P(\bm{x}) + \langle \nabla f(\bm{x}^k), \,\bm{x}-\bm{x}^k\rangle + L\mathcal{D}_{\phi}(\bm{x},\,\bm{x}^k)\right\}.
\end{equation}
Compared to the PGM, the iterative scheme of the BPGM is more flexible. A suitable choice of the kernel function $\phi$ can help explore the underlying geometry or structure of the problem and potentially lead to a more tractable subproblem \eqref{bpgscheme} (see, for example, \cite[Section 5]{bt2003mirror} and
\cite[Lemma 4]{n2005smooth}). More importantly, as investigated in recent insightful works \cite{bbt2017descent,lu2018relatively}, the
BPGM can be developed based on the notion of \textit{relative smoothness}, which is weaker than the longstanding global Lipschitz gradient continuity assumption. This makes the BPGM applicable for a wider range of problems, such as the Poisson inverse problem \cite{bbt2017descent}, D-optimal design \cite{lu2018relatively}, symmetric nonnegative matrix factorization \cite{ddb2021quartic}, orthogonal nonnegative matrix factorization \cite{ahgp2021multi}, symmetric nonnegative matrix tri-factorization \cite{ahgp2021block}, deep linear neural networks \cite{mwlco2019bregman}, and sparse phase retrieval with Poisson loss \cite{ltap2022bregman}. Additionally, Ahookhosh and Nesterov \cite{an2024high} recently extended the results of \cite{n2023inexact} and shown that employing an (accelerated) inexact high-order proximal-point method at the upper level, while solving the proximal subproblem by the BPGM at the lower level, can result in a superfast method for solving problem \eqref{mainpro}. Due to this appealing potential, the BPGM and its variants have garnered increasing attention in recent years; see, for example, \cite{atp2021bregman,bbctw2019linear,bbt2017descent,bstv2018first,dtdb2019optimal,gp2018perturbed,lu2018relatively,t2018simplified,v2017forward,zls2019simple}.

Unfortunately, although the underlying geometry/structure can possibly be captured by the kernel function, the subproblem \eqref{bpgscheme} still has
no closed-form solution in many applications and its computation may still be demanding numerically; see examples in \cite{cv2018entropic,jscb2020efficient,rd2020bregman} and the one in our numerical section. Thus, to make the BPGM truly implementable and practical, it must allow one to solve the subproblem \textit{approximately} with progressive accuracy and the corresponding stopping criterion should be practically \textit{verifiable}. However, unlike the inexact PGM which has been widely studied under different inexact criteria (see, for example, \cite{cw2005signal,srb2011convergence,vsbv2013accelerated}), the study on the inexact BPGM is still at an early stage. Rebegoldi et al. \cite{rbp2018bregman} considered a more general problem of the form \eqref{mainpro} by allowing a nonconvex $f$ and proposed a Bregman inexact linesearch–based forward–backward algorithm, where an $\varepsilon$-minimizer for the subproblem \eqref{bpgscheme} is permitted at each iteration. However, they only establish the subsequential convergence for the proposed inexact algorithm.
Moreover, their analysis requires the assumption that the kernel function $\phi$ satisfies the condition $0\in\mathrm{int}\,\mathrm{dom}\,\phi$. This assumption may exclude several commonly used kernel functions, such as the Boltzmann-Shannon entropy kernel function $\phi(\bm{x}):=\sum_{i=1}^nx_{i}(\log x_{i}-1)$ and the Burg's entropy kernel function $\phi(\bm{x}):=-\sum_{i=1}^n\log x_{i}$. Subsequently, Kabbadj \cite{k2020inexact} developed an inexact BPGM that also permits an $\varepsilon$-minimizer for each subproblem \eqref{bpgscheme} and established stronger convergence results under the relative smoothness assumption. However, since the optimal objective value of subproblem \eqref{bpgscheme} is generally unknown, verifying an $\varepsilon$-minimizer may not be straightforward in practice. Recently, Stonyakin et al. \cite{stonyakin2021inexact} proposed a general inexact algorithmic framework for first-order methods containing the BPGM as a special case, where an approximate solution of the subproblem \eqref{bpgscheme} is accepted when a certain inexact condition
\cite[Definition 2.12]{stonyakin2021inexact} holds. In our context, their inexact condition can be covered by our absolute-type stopping criterion (AbSC) with the additional requirements that $\delta_k\equiv0$ and $\bm{x}^{k+1}\equiv\widetilde{\bm{x}}^{k+1}$ (see \textbf{Step 1} in Algorithm \ref{algo-iBPGM}). However, this condition might be difficult to verify in some practical cases; see Remark \ref{rem-feas} for more discussions. In addition, all the aforementioned inexact algorithms in \cite{k2020inexact,rbp2018bregman,stonyakin2021inexact} do not accommodate the use of a relative-type stopping criterion, which limits their flexibility in practical applications.

Over the last few decades, Nesterov's works \cite{n1983a,n1988on,n2003introductory,n2005smooth} on accelerated gradient methods have inspired numerous accelerated variants of the PGM (see, e.g.,  \cite{bt2009a,bpr2016variable,n2013gradient,t2008on,t2010approximation}) as well as their inexact counterparts (see, e.g., \cite{ad2015stability,bgk2023fista,brr2018inertial,jst2012inexact,srb2011convergence,vsbv2013accelerated}). Naturally, those developments also motivated many researchers to explore whether and how the BPGM can be accelerated. A successful attempt was made by Auslender and Teboulle \cite[Section 5]{at2006interior}, who proposed an improved interior gradient algorithm (a Bregman gradient scheme) for solving a special case of problem \eqref{mainpro} with $P\equiv0$ and showed that this algorithm can achieve a faster convergence rate of $\mathcal{O}(1/k^2)$. Later on, it was extended by Tseng \cite{t2008on,t2010approximation} to handle problem \eqref{mainpro} in a more general composite setting. Other accelerated variants of the BPGM can also be found in \cite{llm2011primal,t2008on,t2010approximation}. Note that all these works \cite{at2006interior,llm2011primal,t2008on,t2010approximation} just mentioned require the assumptions that $f$ has a global Lipschitz gradient and that
the kernel function of the Bregman distance is strongly convex. Until recently, based on the relative smoothness assumption and a so-called triangle scaling property of the Bregman distance, Hanzely, Richt\'{a}rik and Xiao \cite{hrx2018accelerated} developed an accelerated BPGM that attains a convergence rate of $\mathcal{O}(1/k^{\gamma})$, where $\gamma\in(0,2]$ is the triangle scaling exponent. Later, a similar result was also derived under a slightly broader smoothness condition by Gutman and Pe{\~n}a \cite[Section 3.3]{gp2018perturbed}. Very recently, Dragomir et al. \cite{dtdb2019optimal} established a crucial result that the convergence rate of $\mathcal{O}(1/k)$ is indeed \textit{optimal} for the BPGM to solve the class of problems merely satisfying the relative smoothness assumption, and thus the BPGM cannot be accelerated if no other regularity condition is imposed. But empirically improving the BPGM via an inertial step and a certain adaptive backtracking strategy is still possible, as discussed in \cite{hrx2018accelerated,mops2020convex}.

However, among the aforementioned accelerated works that have investigated the possibility of improving the BPGM, none of them take into account of the possible errors incurred when one solves the involved subproblem \textit{approximately}. Indeed, as far as we know, the rigorous study on the inexact inertial BPGM is very limited. Stonyakin et al. \cite{stonyakin2021inexact} have developed an inexact inertial variant of the BPGM in their work. However, as we have mentioned before, the inexact stopping criterion used in \cite{stonyakin2021inexact} can be covered by our absolute-type stopping criterion and may still require improvements regarding its practical verification. Moreover, we also note that Hien et al. \cite{hpgap2022block} recently proposed a block alternating Bregman majorization-minimization framework with extrapolation, which gives an inertial BPGM if only one block of variables is considered, and employed a surrogate function for $P$ to obtain a more tractable auxiliary subproblem which possibly admits a closed-form solution. But such a surrogate function could be nontrivial to find when $P$ is complex, and the auxiliary subproblem needs to be solved \textit{exactly}. Thus, the kind of inexactness considered in \cite{hpgap2022block} is very different from what we consider in this paper. Moreover, replacing $P$ by its surrogate might also potentially slow down the BPGM, especially when the surrogate is not a good approximation of $P$.

In this paper, to facilitate the practical implementations of the BPGM and its inertial variant, we attempt to develop their inexact counterparts and provide theoretical insights on how the error incurred in the inexact minimization of the subproblem would affect the convergence rate in terms of the objective function value. As we shall see later, our inexact framework is developed based on \textit{either} an absolute-type stopping criterion \textit{or} a partial relative-type stopping criterion, and both of them also have a two-point feature. Thus, the resulting inexact framework is rather broad and can handle different types of errors that may occur when solving the subproblem, and it is also helpful to circumvent the underlying feasibility difficulty in evaluating the Bregman distance when dealing with the problem that has a complex feasible set; see Remark \ref{rem-feas} for more details.

\subsection{Contributions}

The contributions of this paper are summarized as follows.
\begin{itemize}[leftmargin=1.15cm]
\item[{\bf 1.}] We develop an inexact Bregman proximal gradient method (iBPGM) based on \textit{either} an absolute-type stopping criterion \textit{or} a partial relative-type stopping criterion, both of which are distinct from the existing ones in \cite{k2020inexact,rbp2018bregman,stonyakin2021inexact} and are practically verifiable. The $\mathcal{O}(1/k)$ convergence rate and the convergence of the sequence are also established for our iBPGM under some proper conditions.

\item[{\bf 2.}] We develop an inertial variant of our iBPGM (denoted by v-iBPGM) and establish the $\mathcal{O}(1/k^{\gamma})$ ($\gamma\geq1$) convergence rate under an additional restricted relative smoothness assumption (see Assumption \ref{assumC}). Our result can subsume the related results in \cite{at2006interior,hrx2018accelerated,t2008on,t2010approximation} wherein the subproblem is solved \textit{exactly}, and as a byproduct, our analysis indeed provides a unified treatment of these existing results which are developed under different conditions. In particular, when the smooth part in the objective has a Lipschitz continuous gradient and the kernel function is strongly convex, we have $\gamma=2$, and thus the v-iBPGM improves the convergence rate of the iBPGM from $\mathcal{O}(1/k)$ to $\mathcal{O}(1/k^2)$.

\item[{\bf 3.}] We conduct preliminary numerical experiments to evaluate the performances of our iBPGM and v-iBPGM under different inexactness settings. The computational results empirically validate the necessity and significance of developing inexact versions for the BPGM and its inertial variant. Furthermore, they reveal that different methods with different types of stopping criteria have different inherent inexactness tolerance requirements, aligning with our theoretical findings.

\end{itemize}

\subsection{Related works}

The two-point-type inexact framework developed in this paper is motivated by our recent works \cite{clty2020an,yt2020bregman} on a new inexact Bregman proximal point algorithm (iBPPA), which solves problem \eqref{mainpro} via the following iterative scheme: at the $k$-th iteration, find a pair $(\bm{x}^{k+1},\,\widetilde{\bm{x}}^{k+1})$ and an error pair $(\Delta^k, \delta_k)$ by approximately solving
\begin{equation}\label{BPPAsubpro-gen}
\min\limits_{\bm{x}}~
P(\bm{x}) + f(\bm{x}) + \gamma_k\mathcal{D}_{\psi}(\bm{x}, \,\bm{x}^k),
\end{equation}
such that $\bm{x}^{k+1}\in\mathrm{int}\,\mathcal{Q}$, $\widetilde{\bm{x}}^{k+1}\in\mathrm{dom}\,(P+f)\cap\mathcal{Q}$, and
\begin{equation}\label{BPPAcond-gen}
\begin{aligned}
&\Delta^{k} \in \partial_{\delta_k}(P+f)(\widetilde{\bm{x}}^{k+1})
+ \gamma_k\big(\nabla\psi(\bm{x}^{k+1}) - \nabla\psi(\bm{x}^{k})\big) \\
&~~\mathrm{with}~~\|\Delta^{k}\| \leq \eta_k,
~~\delta_k\leq\nu_k, ~~\mathcal{D}_{\psi}(\widetilde{\bm{x}}^{k+1}, \,\bm{x}^{k+1}) \leq \mu_k,
\end{aligned}
\end{equation}
where $\{\gamma_k\}$ is a given proximal parameter sequence, and $\{\eta_k\}$, $\{\nu_k\}$, $\{\mu_k\}$ are three given tolerance parameter sequences. By the definition of the Bregman distance, it can be easily verified that
\begin{equation*}
P(\bm{x}) + f(\bm{x}) + \mathcal{D}_{\psi}(\bm{x}, \,\bm{x}^k)
= P(\bm{x}) + f(\bm{x}^k) + \langle \nabla f(\bm{x}^k), \,\bm{x}-\bm{x}^k\rangle + \lambda\mathcal{D}_{\phi}(\bm{x},\,\bm{x}^k),
\end{equation*}
when $\psi:=\lambda\phi-f$ for any $\lambda>0$. Therefore, the subproblem \eqref{subpro-iBPGM} in the iBPGM developed later in Algorithm \ref{algo-iBPGM} is equivalent to the subproblem \eqref{BPPAsubpro-gen} with $\psi:=\lambda\phi-f$ and $\gamma_k\equiv1$ in the above iBPPA. This demonstrates that, when the \textit{exact} minimization of the subproblem \eqref{subpro-iBPGM} is achievable, the exact BPGM can be viewed as an exact BPPA with a special kernel $\psi$ that depends on $f$. Such an equivalence has also been pointed out in
\cite[Theorem 4.1]{atp2021bregman}. However, when considering the \textit{inexact} minimization of each subproblem, this equivalence may no longer hold. Indeed, with $\psi:=\lambda\phi-f$ and $\gamma_k\equiv1$, the inexact condition \eqref{BPPAcond-gen} used in the above iBPPA becomes
\begin{equation*}
\begin{aligned}
&\Delta^{k} \in \partial_{\delta_k}(P+f)(\widetilde{\bm{x}}^{k+1})
+ \lambda\big(\nabla \phi(\bm{x}^{k+1})-\nabla \phi(\bm{x}^{k})\big)
- \big(\nabla f(\bm{x}^{k+1}) - \nabla f(\bm{x}^{k})\big)  \\
&~~\mathrm{with}~~\|\Delta^{k}\| \leq \eta_k,
~~\delta_k\leq\nu_k, ~~\mathcal{D}_{\lambda\phi-f}(\widetilde{\bm{x}}^{k+1}, \,\bm{x}^{k+1}) \leq \mu_k,
\end{aligned}
\end{equation*}
which is not equivalent to, nor implied by, condition \eqref{inexcond-iBPGM} satisfying an absolute-type stopping criterion employed in our iBPGM, when $\widetilde{\bm{x}}^{k+1}\neq\bm{x}^{k+1}$. Consequently, the derivation and analysis in \cite{clty2020an,yt2020bregman} are not directly applicable to our iBPGM. In addition, we introduce a relative-type stopping criterion in this paper, which is not addressed in \cite{clty2020an,yt2020bregman}. As a result, by setting $f\equiv0$, our iBPGM with this new relative-type stopping criterion also provides a new inexact version of the BPPA.

\subsection{Organization}

The rest of this paper is organized as follows. We present notation and preliminaries in Section \ref{secnot}. We then describe our iBPGM for solving problem \eqref{mainpro} and establish the convergence results in Section \ref{sec-iBPGM}. A possibly faster inertial variant of our iBPGM is developed and analyzed in Section \ref{sec-v-iBPGM}. Some preliminary numerical results are reported in Section \ref{secnum}, with some concluding remarks given in Section \ref{seccon}.

\section{Notation and preliminaries}\label{secnot}

Assume that $f: \mathbb{E} \rightarrow (-\infty, \infty]$ is a proper closed convex function. For a given $\varepsilon \geq 0$, the $\varepsilon$-subdifferential of $f$ at $\bm{x}\in{\rm dom}\,f$ is defined by $\partial_{\varepsilon} f(\bm{x}):=\{\bm{d}\in\mathbb{E}: f(\bm{y}) \geq f(\bm{x}) + \langle \bm{d}, \,\bm{y}-\bm{x} \rangle - \varepsilon, ~\forall\,\bm{y}\in\mathbb{E}\}$, and when $\varepsilon=0$, $\partial_{\varepsilon} f$ is simply denoted by $\partial f$, which is referred to as the subdifferential of $f$. The conjugate function of $f$ is the function $f^*: \mathbb{E} \rightarrow (-\infty,\infty]$ defined by $f^*(\bm{y}):=\sup\left\{\langle \bm{y},\,\bm{x}\rangle-f(\bm{x}): \bm{x}\in\mathbb{E}\right\}$. A proper closed convex function $f$ is \textit{essentially smooth} if (i) $\mathrm{int}\,\mathrm{dom}\,f$ is not empty; (ii) $f$ is differentiable on $\mathrm{int}\,\mathrm{dom}\,f$; (iii) $\|\nabla f(x_k)\|\to\infty$ for every sequence $\{x_k\}$ in $\mathrm{int}\,\mathrm{dom}\,f$ converging to a boundary point of $\mathrm{int}\,\mathrm{dom}\,f$; see \cite[page 251]{r1970convex}.

For a vector $\bm{x}\in\mathbb{R}^n$, $x_i$ denotes its $i$-th entry, $\mathrm{Diag}(\bm{x})$ denotes the diagonal matrix whose $i$th diagonal entry is $x_i$, $\|\bm{x}\|$ denotes its $\ell_2$ (Euclidean) norm. For a matrix $A\in\mathbb{R}^{m\times n}$, $a_{ij}$ denotes its $(i,j)$th entry, $A_{:j}$ denotes its $j$th column, $\|A\|_F$ denotes its Frobenius norm. For a closed convex set $\mathcal{X}\subseteq\mathbb{E}$, its indicator function $\iota_{\mathcal{X}}$ is defined by $\iota_{\mathcal{X}}(\bm{x})=0$ if $\bm{x}\in\mathcal{X}$ and $\iota_{\mathcal{X}}(\bm{x})=+\infty$ otherwise.

Given a proper closed strictly convex function $\phi: \mathbb{E} \rightarrow (-\infty,\infty]$, finite at $\bm{x}$, $\bm{y}$ and differentiable at $\bm{y}$ but not necessarily at $\bm{x}$, the \textit{Bregman distance} \cite{b1967relaxation} between $\bm{x}$ and $\bm{y}$ associated with the kernel function $\phi$ is defined as
\begin{equation*}
\mathcal{D}_{\phi}(\bm{x}, \,\bm{y}) := \phi(\bm{x}) - \phi(\bm{y}) - \langle \nabla\phi(\bm{y}), \,\bm{x} - \bm{y} \rangle.
\end{equation*}
It is easy to see that $D_{\phi}(\bm{x}, \,\bm{y})\geq0$ and  equality holds if and only if $\bm{x}=\bm{y}$ due to the strict convexity of $\phi$. When $\mathbb{E}:=\mathbb{R}^n$ and $\phi(\cdot):=\|\cdot\|^2$, $\mathcal{D}_{\phi}(\cdot,\cdot)$ readily recovers the classical squared Euclidean distance. Based on the Bregman distance, we then define the \textit{restricted relative smoothness} as follows.

\begin{definition}[{\bf Restricted relative smoothness on $\mathcal{X}$}]\label{defRe2Sm}
Let $f, \,\phi: \mathbb{E}\to(-\infty, \infty]$ be proper closed convex functions with $\mathrm{dom}\,f\supseteq\mathrm{dom}\,\phi$ and $f,\,\phi$ be differentiable on $\mathrm{int}\,\mathrm{dom}\,\phi$. Given a closed convex set $\mathcal{X}\subseteq\mathbb{E}$ with $\mathcal{X}\cap\mathrm{int}\,\mathrm{dom}\,\phi\neq\emptyset$, we say that $f$ is $L$-smooth relative to $\phi$ restricted on $\mathcal{X}$ if there exists $L\geq0$ such that
\begin{equation}\label{relsmoothcond}
f(\bm{y}) \leq f(\bm{x}) + \langle \nabla f(\bm{x}), \,\bm{y}-\bm{x}\rangle + L\mathcal{D}_{\phi}(\bm{y}, \,\bm{x}), \quad \forall\,
\bm{x}\in\mathcal{X}\cap \mathrm{int}\, \mathrm{dom}\,\phi,
~\bm{y}\in\mathcal{X}\cap \mathrm{dom}\,\phi.
\end{equation}
\end{definition}

The above restricted relative smoothness modifies the original relative smoothness (or the Lipschitz-like/convexity condition) introduced in \cite{bbt2017descent,lu2018relatively} by imposing a restricted set $\mathcal{X}$, and it readily reduces to the original notion when $\mathcal{X}=\mathbb{E}$. Such a restriction would help to extend the notion of the relative smoothness to more choices of $(f, \,\phi)$ with a proper $\mathcal{X}$. For example, when $\nabla f$ is $L_f$-Lipschitz continuous on $\mathbb{R}^n$ and $\phi$ is $\mu_{\phi}$-strongly convex on $\mathcal{X}\subseteq\mathbb{R}^n$, then it can be verified that $f$ is $\frac{L_f}{\mu_{\phi}}$-smooth relative to $\phi$ restricted on $\mathcal{X}$, but $f$ may not be $L$-smooth relative to $\phi$ according to the original definition in \cite{bbt2017descent,lu2018relatively} because $\phi$ may not be strongly convex on its domain. For example, the entropy function $\phi(x)=\sum^n_{i=1}x_i(\log x_i - 1)$ is $\frac{1}{\alpha}$-strongly convex on $[0,\,\alpha]^n$ with any $\alpha>0$, but it is not strongly convex on $\mathrm{dom}\,\phi=\mathbb{R}^n_{+}$. Therefore, employing the notion of restricted relative smoothness in Definition \ref{defRe2Sm} could broaden the possible applications of the
BPGM and its inertial variants. More discussions on the relative smoothness can be found in \cite{bbt2017descent,lu2018relatively}.

In order to establish the rigorous analysis under the relatively smooth setting, we make the following blanket technical assumptions.

\begin{assumption}\label{assumA}
Problem \eqref{mainpro} and the kernel function $\phi$ satisfy the following assumptions.
\begin{itemize}

\item[{\bf A1.}] $\phi:\mathbb{E}\to(-\infty, \infty]$ is essentially smooth and strictly convex on $\mathrm{int}\,\mathrm{dom}\,\phi$. Moreover, $\overline{\mathrm{dom}}\,\phi=\mathcal{Q}$, where $\overline{\mathrm{dom}}\,\phi$ denotes the closure of $\mathrm{dom}\,\phi$.

\item[{\bf A2.}] $P:\mathbb{E}\to(-\infty, \infty]$ is a proper closed convex function with $\mathrm{dom}\,P\cap\mathrm{int}\,\mathcal{Q}\neq\emptyset$.

\item[{\bf A3.}] $f:\mathbb{E}\to(-\infty, \infty]$ is a proper closed convex function with $\mathrm{dom}\,f\supseteq\mathrm{dom}\,\phi$ and $f$ is differentiable on $\mathrm{int}\,\mathrm{dom}\,\phi$. Moreover, there exists a closed convex set $\mathcal{X}\supseteq\mathrm{dom}\,P\cap\mathrm{dom}\,\phi$ such that $f$ is $L$-smooth relative to $\phi$ restricted on $\mathcal{X}$.

\item[{\bf A4.}] $F^*:=\inf\left\{F(\bm{x}) : \bm{x}\in\mathcal{Q} \right\}>-\infty$, i.e., problem \eqref{mainpro} is bounded from below.

\item[{\bf A5.}] Each subproblem in the iBPGM and its inertial variant is well-defined in the sense that the subproblem has a unique solution, which lies in  $\mathrm{int}\,\mathrm{dom}\,\phi$.

\end{itemize}
\end{assumption}

Assumption \ref{assumA}1 implies that the kernel function $\phi$ is of Legendre type; see \cite[Page 258]{r1970convex}. Moreover, $\overline{\mathrm{dom}}\,\phi=\mathcal{Q}$ implies $\mathrm{int}\,\mathrm{dom}\,\phi=\mathrm{int}\,\mathcal{Q}$ due to the convexity of $\mathrm{dom}\,\phi$ and \cite[Proposition 3.36(iii)]{bc2011convex}. Then, one can see from Assumptions \ref{assumA}2\&3 that
\begin{equation*}
\mathrm{dom}\,(P+f)\cap\mathrm{int}\,\mathrm{dom}\,\phi
=\mathrm{dom}\,P\cap\mathrm{int}\,\mathrm{dom}\,\phi=\mathrm{dom}\,P\cap\mathrm{int}\,\mathcal{Q} \not=\emptyset.
\end{equation*}
This, together with $\overline{\mathrm{dom}}\,\phi=\mathcal{Q}$ and
\cite[Proposition 11.1(iv)]{bc2011convex} implies that
\begin{equation}\label{infeq}
F^*=\inf\left\{F(\bm{x}) : \bm{x}\in\mathcal{Q} \right\}
=\inf\left\{F(\bm{x}) : \bm{x}\in\mathrm{dom}\,\phi \right\}.
\end{equation}
Assumption \ref{assumA}5 is typically made to ensure the well-posedness of the BPGM and its inertial variant; see, for example, \cite[Section 3]{bbt2017descent} and \cite[Assumption A5]{hrx2018accelerated}. Indeed, the existence and uniqueness of the solution are commonly used for proximal algorithms in the convex setting. Moreover, for an essentially smooth kernel function $\phi$, its gradient $\nabla\phi$ is defined only on $\mathrm{int}\,\mathrm{dom}\,\phi$. Thus, in a Bregman-type method, it is essential to ensure that the solution at the $k$-th iteration remains in $\mathrm{int}\,\mathrm{dom}\,\phi$ to guarantee the well-definedness of the Bregman proximal term in the subsequent iteration. Assumption \ref{assumA}5 can be satisfied when, for example, $\phi$ is strongly convex over the entire space. Other sufficient conditions are provided in \cite[Lemma 2]{bbt2017descent}.

Finally, we give four supporting lemmas that will be used in the subsequent analysis. The identity in first lemma is routine to verify and the proofs of last two lemmas are relegated to Appendix \ref{apd-lemmas}.

\begin{lemma}[{\bf Four points identity}]
Suppose that a proper closed strictly convex function $\phi: \mathbb{E} \rightarrow (-\infty,\infty]$ is finite at $\bm{a},\,\bm{b},\,\bm{c},\,\bm{d}$ and differentiable at $\bm{a},\,\bm{b}$. Then,
\begin{equation}\label{fourId}
\langle \nabla\phi(\bm{a})-\nabla\phi(\bm{b}),\,\bm{c}-\bm{d} \rangle = \mathcal{D}_{\phi}(\bm{c},\,\bm{b}) + \mathcal{D}_{\phi}(\bm{d},\,\bm{a}) - \mathcal{D}_{\phi}(\bm{c},\,\bm{a}) - \mathcal{D}_{\phi}(\bm{d},\,\bm{b}).
\end{equation}
\end{lemma}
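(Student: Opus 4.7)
The plan is to verify the identity by direct expansion: substitute the definition $\mathcal{D}_{\phi}(\bm{x},\bm{y}) = \phi(\bm{x}) - \phi(\bm{y}) - \langle \nabla\phi(\bm{y}), \bm{x}-\bm{y}\rangle$ into each of the four Bregman distance terms on the right-hand side, and then collect terms. The hypotheses (finiteness at the four points and differentiability at $\bm{a},\bm{b}$) are exactly what is needed for all four quantities $\mathcal{D}_\phi(\bm{c},\bm{b})$, $\mathcal{D}_\phi(\bm{d},\bm{a})$, $\mathcal{D}_\phi(\bm{c},\bm{a})$, $\mathcal{D}_\phi(\bm{d},\bm{b})$ to be well-defined, so no delicate preliminary reductions are required.

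After substitution, I would first observe that the eight values of $\phi$ cancel in pairs: $\phi(\bm{c})$ appears with opposite signs in $\mathcal{D}_\phi(\bm{c},\bm{b})$ and $-\mathcal{D}_\phi(\bm{c},\bm{a})$, and likewise for $\phi(\bm{d})$, $\phi(\bm{a})$, $\phi(\bm{b})$. What remains are four inner-product terms,
\begin{equation*}
-\langle \nabla\phi(\bm{b}), \bm{c}-\bm{b}\rangle - \langle \nabla\phi(\bm{a}), \bm{d}-\bm{a}\rangle + \langle \nabla\phi(\bm{a}), \bm{c}-\bm{a}\rangle + \langle \nabla\phi(\bm{b}), \bm{d}-\bm{b}\rangle,
\end{equation*}
which I would group by gradient. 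Pulling out $\nabla\phi(\bm{a})$ gives $\langle\nabla\phi(\bm{a}), (\bm{c}-\bm{a})-(\bm{d}-\bm{a})\rangle = \langle\nabla\phi(\bm{a}), \bm{c}-\bm{d}\rangle$, and pulling out $\nabla\phi(\bm{b})$ gives $\langle\nabla\phi(\bm{b}), (\bm{d}-\bm{b})-(\bm{c}-\bm{b})\rangle = -\langle\nabla\phi(\bm{b}), \bm{c}-\bm{d}\rangle$. Adding these two recovers the left-hand side $\langle\nabla\phi(\bm{a})-\nabla\phi(\bm{b}), \bm{c}-\bm{d}\rangle$, completing the proof.

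There is no real obstacle here; the paper itself flags the identity as ``routine to verify.'' The only thing worth noting is that strict convexity of $\phi$ plays no role in the identity itself (it is inherited from the blanket definition of the Bregman distance used in the paper), so the proof is a purely algebraic rearrangement and could equally well be stated for any function that is finite at the four points and differentiable at the two base points.
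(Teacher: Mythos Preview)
Your proof is correct and is precisely the ``routine to verify'' direct expansion the paper alludes to; the paper does not actually write out a proof. Your remark that strict convexity plays no role in the identity itself is also accurate.
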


\begin{lemma}[{\cite[Section 2.2]{p1987introduction}}]\label{lemseqcon}
Suppose that $\{\alpha_k\}_{k=0}^{\infty}\subseteq\mathbb{R}$ and $\{\beta_k\}_{k=0}^{\infty}\subseteq\mathbb{R}_+$ are two sequences such that $\{\alpha_k\}$ is bounded from below, $\sum_{k=0}^{\infty} \beta_k < \infty$, and $\alpha_{k+1} \leq \alpha_{k} + \beta_k$ holds for all $k$. Then, $\{\alpha_k\}$ is convergent.
\end{lemma}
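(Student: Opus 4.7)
The plan is to convert the quasi-monotone recursion $\alpha_{k+1}\leq\alpha_k+\beta_k$ into a genuinely monotone one by absorbing the tail of the series $\sum\beta_k$ into a shifted sequence, and then to invoke the monotone convergence theorem for real sequences. The key observation is that if $T_k := \sum_{j=k}^{\infty}\beta_j$ (which is finite by the summability hypothesis on $\{\beta_k\}$), then $T_k - T_{k+1} = \beta_k$, so the error term on the right-hand side of the recursion can be exactly cancelled by a well-chosen translation of $\alpha_k$.

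First I would define $\gamma_k := \alpha_k + T_k$ and use the recursion to show $\gamma_{k+1} = \alpha_{k+1} + T_{k+1} \leq \alpha_k + \beta_k + T_{k+1} = \alpha_k + T_k = \gamma_k$, establishing that $\{\gamma_k\}$ is nonincreasing. Next, reading the hypothesis $\sum_{k=0}^{\infty}\beta_k < \infty$ in its standard form where $\beta_k\geq 0$, I have $T_k\geq 0$ and hence $\gamma_k\geq\alpha_k\geq c$, where $c$ is the lower bound of $\{\alpha_k\}$. Thus $\{\gamma_k\}$ is a nonincreasing sequence bounded from below, so by the monotone convergence theorem it converges to some finite limit $\gamma^{*}$.

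Finally, since $T_k$ is the tail of a convergent series, $T_k\to 0$ as $k\to\infty$, and therefore $\alpha_k = \gamma_k - T_k \to \gamma^{*}$, which yields the desired convergence of $\{\alpha_k\}$. The argument is essentially a one-line trick once the auxiliary sequence $\gamma_k$ is identified, and does not use any of the Bregman-distance, relative-smoothness, or optimization structure developed elsewhere in the paper.

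The main subtlety, rather than a genuine obstacle, lies in the sign convention for $\{\beta_k\}$: if one wishes to accommodate $\beta_k$ of arbitrary sign, the hypothesis $\sum\beta_k<\infty$ should be interpreted as absolute summability $\sum|\beta_k|<\infty$, in which case $T_k$ is still well-defined and vanishes at infinity, and the lower bound for $\{\gamma_k\}$ becomes $\gamma_k\geq\alpha_k-\sum_{j=0}^{\infty}|\beta_j|$, which still suffices for the monotone convergence step. Otherwise the proof is routine and the cited reference \cite{p1987introduction} treats exactly this construction.
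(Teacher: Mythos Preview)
Your argument is correct and is the standard tail-shift proof of this classical lemma. Note, however, that the paper does \emph{not} supply its own proof of this statement: it simply cites \cite[Section 2.2]{p1987introduction} and uses the result as a black box. So there is no ``paper's proof'' to compare against; your write-up is a faithful reconstruction of the textbook argument behind the citation, and nothing more is needed.
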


\begin{lemma}\label{lemseqcon2}
Let $\{\alpha_k\}_{k=0}^{\infty}$ be a nonnegative sequence. If $\sum^{\infty}_{k=0}\alpha_k<\infty$, then $\frac{1}{k}\sum^{k-1}_{i=0}i\alpha_i\to0$.
\end{lemma}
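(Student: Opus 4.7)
The plan is a classical truncation argument, essentially in the spirit of Kronecker's lemma. Set $S:=\sum_{i=0}^{\infty}\alpha_i$, which is finite by hypothesis. Fix an arbitrary $\varepsilon>0$; by the convergence of the series I first pick an index $N=N(\varepsilon)$ large enough so that the tail satisfies $\sum_{i=N}^{\infty}\alpha_i<\varepsilon/2$.

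I then split the average at index $N$:
\[
\frac{1}{k}\sum_{i=0}^{k-1}i\alpha_i \;=\; \underbrace{\frac{1}{k}\sum_{i=0}^{N-1}i\alpha_i}_{\text{head}} \;+\; \underbrace{\frac{1}{k}\sum_{i=N}^{k-1}i\alpha_i}_{\text{tail}}.
\]
For the tail, I use the uniform bound $i\leq k$ on the summation range together with nonnegativity of the $\alpha_i$'s to obtain, for every $k>N$,
\[
\frac{1}{k}\sum_{i=N}^{k-1}i\alpha_i\;\leq\;\sum_{i=N}^{k-1}\alpha_i\;\leq\;\sum_{i=N}^{\infty}\alpha_i\;<\;\frac{\varepsilon}{2}.
\]
For the head, I note that the finite sum $\sum_{i=0}^{N-1}i\alpha_i$ is bounded above by $NS$ independently of $k$, so dividing by $k$ yields a quantity that tends to $0$ as $k\to\infty$ with $N$ fixed. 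Hence I can choose $K>N$ such that $\frac{1}{k}\sum_{i=0}^{N-1}i\alpha_i<\varepsilon/2$ for all $k\geq K$.

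Combining the two estimates gives $\frac{1}{k}\sum_{i=0}^{k-1}i\alpha_i<\varepsilon$ for every $k\geq K$, and since $\varepsilon>0$ was arbitrary, the claimed convergence to $0$ follows. There is no real obstacle here; the only care needed is the order of quantifiers, namely that $N$ must be chosen first (depending only on $\varepsilon$ through the tail of the series) and only afterwards is $k$ sent to infinity to kill the head term.
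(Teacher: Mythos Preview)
Your proof is correct and follows essentially the same truncation argument as the paper: split the sum at an index $\widetilde{N}$ chosen so that the tail of $\sum\alpha_i$ is below $\varepsilon/2$, bound the tail term via $i/k<1$, and kill the head term by sending $k\to\infty$. The only cosmetic difference is that the paper writes the threshold $K$ explicitly as $N_{\varepsilon}:=\max\bigl\{\widetilde{N}+1,\,2\varepsilon^{-1}\sum_{i=0}^{\widetilde{N}-1}i\alpha_i\bigr\}$ rather than invoking its existence.
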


\begin{lemma}\label{supplem}
Suppose that Assumption \ref{assumA} holds and $\mathcal{X}$ is the closed convex set in Assumption \ref{assumA}3. Given any $\bm{y}\in \mathcal{X}\cap
\mathrm{int}\,\mathrm{dom}\,\phi$ and $\lambda \geq 0$, let $(\bm{x}^*,\,\widetilde{\bm{x}}^*)$ be a pair of approximate solutions of
\begin{equation*}
\min\limits_{\bm{x}}\,P(\bm{x}) + \langle \nabla f(\bm{y}), \,\bm{x}-\bm{y}\rangle + \lambda\mathcal{D}_{\phi}(\bm{x},\,\bm{y})
\end{equation*}
such that $\bm{x}^* \in \mathcal{X} \cap\mathrm{int}\,\mathrm{dom}\,\phi$, $\widetilde{\bm{x}}^* \in \mathrm{dom}\,P\cap\mathrm{dom}\,\phi$, and
\begin{equation}\label{inexcond-gen}
\Delta \in \partial_{\delta} P(\widetilde{\bm{x}}^*) + \nabla f(\bm{y}) + \lambda(\nabla \phi(\bm{x}^*)-\nabla \phi(\bm{y})).
\end{equation}
Then, for any $\bm{x}\in\mathrm{dom}\,P\cap\mathrm{dom}\,\phi$, we have
\begin{equation*}
\begin{aligned}
&\quad F(\widetilde{\bm{x}}^*) - F(\bm{x}) \\
&\leq \lambda\mathcal{D}_{\phi}(\bm{x},\,\bm{y})
- \lambda\mathcal{D}_{\phi}(\bm{x},\,\bm{x}^*)
- (\lambda-L)\mathcal{D}_{\phi}(\widetilde{\bm{x}}^*,\,\bm{y})
+ \lambda\mathcal{D}_{\phi}(\widetilde{\bm{x}}^*,\,\bm{x}^*) + \delta
+ |\langle \Delta, \,\widetilde{\bm{x}}^* - \bm{x} \rangle|.
\end{aligned}
\end{equation*}
\end{lemma}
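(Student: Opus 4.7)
The plan is to combine an inexact first-order condition coming from the $\nu$-subdifferential inclusion with the restricted relative smoothness of $f$ and the convexity of $f$, and then use the four points identity \eqref{fourId} as the algebraic engine that produces the $\mathcal{D}_\phi$ terms appearing on the right-hand side. Throughout, I would track the three error contributions separately: one coming from $\|\Delta\|\le\eta$, one from $\mathcal{D}_\phi(\widetilde{\bm{x}}^*,\bm{x}^*)\le\mu$, and one from the $\nu$-slack in the subdifferential.

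First, I would extract the vector $\bm{d}:=\Delta-\nabla f(\bm{y})-\lambda(\nabla\phi(\bm{x}^*)-\nabla\phi(\bm{y}))$, which by \eqref{inexcond-gen} lies in $\partial_\nu P(\widetilde{\bm{x}}^*)$. Applying the definition of the $\nu$-subdifferential at an arbitrary $\bm{x}\in\mathrm{dom}\,P\cap\mathrm{dom}\,\phi$ yields
$P(\widetilde{\bm{x}}^*)-P(\bm{x}) \le \langle\bm{d},\widetilde{\bm{x}}^*-\bm{x}\rangle+\nu$.
For the smooth part, I would use restricted relative smoothness (with $\bm{y}\in\mathrm{int}\,\mathrm{dom}\,\phi\cap\mathcal{X}$ and $\widetilde{\bm{x}}^*\in\mathrm{dom}\,P\cap\mathrm{dom}\,\phi\subseteq\mathcal{X}\cap\mathrm{dom}\,\phi$) to upper-bound $f(\widetilde{\bm{x}}^*)$ by $f(\bm{y})+\langle\nabla f(\bm{y}),\widetilde{\bm{x}}^*-\bm{y}\rangle+L\mathcal{D}_\phi(\widetilde{\bm{x}}^*,\bm{y})$, and convexity of $f$ to lower-bound $f(\bm{x})$ by $f(\bm{y})+\langle\nabla f(\bm{y}),\bm{x}-\bm{y}\rangle$. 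Subtracting these gives $f(\widetilde{\bm{x}}^*)-f(\bm{x})\le \langle\nabla f(\bm{y}),\widetilde{\bm{x}}^*-\bm{x}\rangle+L\mathcal{D}_\phi(\widetilde{\bm{x}}^*,\bm{y})$.

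Adding the two bounds, the $\nabla f(\bm{y})$ contributions from $\bm{d}$ cancel cleanly, leaving
$F(\widetilde{\bm{x}}^*)-F(\bm{x})\le \langle\Delta,\widetilde{\bm{x}}^*-\bm{x}\rangle-\lambda\langle\nabla\phi(\bm{x}^*)-\nabla\phi(\bm{y}),\widetilde{\bm{x}}^*-\bm{x}\rangle+L\mathcal{D}_\phi(\widetilde{\bm{x}}^*,\bm{y})+\nu$.
Next I would apply the four points identity \eqref{fourId} with $(\bm{a},\bm{b},\bm{c},\bm{d})=(\bm{x}^*,\bm{y},\widetilde{\bm{x}}^*,\bm{x})$ to rewrite the inner product as $\mathcal{D}_\phi(\widetilde{\bm{x}}^*,\bm{y})+\mathcal{D}_\phi(\bm{x},\bm{x}^*)-\mathcal{D}_\phi(\widetilde{\bm{x}}^*,\bm{x}^*)-\mathcal{D}_\phi(\bm{x},\bm{y})$. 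After substitution, the $L\mathcal{D}_\phi(\widetilde{\bm{x}}^*,\bm{y})$ and $-\lambda\mathcal{D}_\phi(\widetilde{\bm{x}}^*,\bm{y})$ combine into $-(\lambda-L)\mathcal{D}_\phi(\widetilde{\bm{x}}^*,\bm{y})$, the $\lambda\mathcal{D}_\phi(\bm{x},\bm{y})$ and $-\lambda\mathcal{D}_\phi(\bm{x},\bm{x}^*)$ appear with the correct signs, and the residual term $\lambda\mathcal{D}_\phi(\widetilde{\bm{x}}^*,\bm{x}^*)$ is absorbed via $\mathcal{D}_\phi(\widetilde{\bm{x}}^*,\bm{x}^*)\le\mu$. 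Finally, Cauchy--Schwarz together with $\|\Delta\|\le\eta$ bounds $\langle\Delta,\widetilde{\bm{x}}^*-\bm{x}\rangle$ by $\eta\|\widetilde{\bm{x}}^*-\bm{x}\|$, giving the stated inequality.

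I do not expect a serious obstacle: the only delicate point is checking that every application of convexity, relative smoothness, and the four points identity is legal at the chosen points, which reduces to verifying that $\bm{y}\in\mathrm{int}\,\mathrm{dom}\,\phi\cap\mathcal{X}$, $\widetilde{\bm{x}}^*\in\mathrm{dom}\,\phi\cap\mathcal{X}$, and $\bm{x}^*\in\mathrm{int}\,\mathrm{dom}\,\phi\cap\mathcal{X}$ (so that $\nabla\phi(\bm{y})$ and $\nabla\phi(\bm{x}^*)$ exist, $\mathcal{D}_\phi$ is well defined at the required pairs, and \eqref{relsmoothcond} applies). All of these follow directly from the hypotheses and the inclusion $\mathcal{X}\supseteq\mathrm{dom}\,P\cap\mathrm{dom}\,\phi$ in Assumption \ref{assumA}3.
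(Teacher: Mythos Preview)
Your proposal is correct and follows essentially the same approach as the paper's proof: extract $\bm{d}\in\partial_\nu P(\widetilde{\bm{x}}^*)$ from \eqref{inexcond-gen}, use the $\nu$-subdifferential inequality for $P$, combine restricted relative smoothness with convexity for $f$, and apply the four points identity \eqref{fourId} together with the bounds $\|\Delta\|\le\eta$ and $\mathcal{D}_\phi(\widetilde{\bm{x}}^*,\bm{x}^*)\le\mu$. The only difference is a harmless reordering---the paper applies \eqref{fourId} and Cauchy--Schwarz to the $P$-inequality first before adding the $f$-inequality, whereas you add first and simplify afterwards---but the ingredients and the resulting computation are identical.
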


\section{An inexact Bregman proximal gradient method}\label{sec-iBPGM}

In this section, we develop an inexact Bregman proximal gradient method (iBPGM) based on two types of inexact stopping criteria for solving problem \eqref{mainpro} and study its convergence properties. The complete iterative framework is presented as Algorithm \ref{algo-iBPGM}.

\begin{algorithm}[ht]
\caption{An inexact Bregman proximal gradient  method (iBPGM) for problem \eqref{mainpro}}\label{algo-iBPGM}
\textbf{Input:} Follow Assumption \ref{assumA} to choose a kernel function $\phi$ with $L\geq0$ and $\mathcal{X}\supseteq\mathrm{dom}\,P\cap\mathrm{dom}\,\phi$. Choose
$\bm{x}^0\in \mathcal{X}\cap\mathrm{int}\,\mathrm{dom}\,\phi$ and $\widetilde{\bm{x}}^{0}\in\mathrm{dom}\,P\cap\mathrm{dom}\,\phi$. For (AbSC), choose $\lambda \geq L$ and a sequence of nonnegative scalars $\{\varepsilon_k\}_{k=0}^{\infty}$. For (ReSC), choose $\lambda>L$ and $0\leq\sigma<1-\lambda^{-1}L$. Set $k=0$.   \\
\textbf{while} a termination criterion is not met, \textbf{do} \vspace{-2mm}
\begin{itemize}[leftmargin=2cm]
\item[\textbf{Step 1}.] Find a pair $(\bm{x}^{k+1}, \,\widetilde{\bm{x}}^{k+1})$ and an error pair $(\Delta^k, \delta_k)$ by approximately solving
    \begin{equation}\label{subpro-iBPGM}
    \min\limits_{\bm{x}}~P(\bm{x}) + \langle \nabla f(\bm{x}^k), \,\bm{x}-\bm{x}^k\rangle + \lambda\mathcal{D}_{\phi}(\bm{x},\,\bm{x}^k),
    \end{equation}
    such that $\bm{x}^{k+1} \in \,\mathcal{X}\cap\mathrm{int}\,\mathrm{dom}\,\phi$, $\widetilde{\bm{x}}^{k+1} \in \mathrm{dom}\,P\cap\mathrm{dom}\,\phi$, and
    \begin{equation}\label{inexcond-iBPGM}
    \Delta^k \in \partial_{\delta_k} P(\widetilde{\bm{x}}^{k+1}) + \nabla f(\bm{x}^k) + \lambda\big(\nabla \phi(\bm{x}^{k+1})-\nabla \phi(\bm{x}^{k})\big),
    \end{equation}
    satisfying \textit{either} an absolute-type stopping criterion (AbSC) \textit{or} a partial relative-type stopping criterion (ReSC) as follows:
    \begin{equation*}
    \begin{aligned}
    &\textbf{(AbSC)} \qquad \|\Delta^k\| + \lambda^{-1}\delta_k + \mathcal{D}_{\phi}(\widetilde{\bm{x}}^{k+1}, \,\bm{x}^{k+1}) \leq \varepsilon_{k}, \\[2pt]
    &\textbf{(ReSC)} \qquad \|\Delta^k\| = 0 ~~\mbox{and}~~ \lambda^{-1}\delta_k + \mathcal{D}_{\phi}(\widetilde{\bm{x}}^{k+1}, \,\bm{x}^{k+1}) \leq \sigma\mathcal{D}_{\phi}(\widetilde{\bm{x}}^{k+1}, \,\bm{x}^{k}).
    \end{aligned}
    \end{equation*}

\item [\textbf{Step 2}.] Set $k = k+1$ and go to \textbf{Step 1}. \vspace{-1.5mm}
\end{itemize}
\textbf{end while}  \\
\textbf{Output}: $(\bm{x}^{k}, \,\widetilde{\bm{x}}^{k})$ \vspace{0.5mm}
\end{algorithm}

One can see from Algorithm \ref{algo-iBPGM} that, at each iteration, our inexact framework allows one to \textit{approximately} solve the subproblem \eqref{subpro-iBPGM} under condition \eqref{inexcond-iBPGM} satisfying \textit{either} an absolute-type stopping criterion (AbSC) \textit{or} a partial\footnote{Here, ``partial" means that, unlike (AbSC), a non-zero error term $\Delta^k$ is not allowed in (ReSC).} relative-type stopping criterion (ReSC). Since the subproblem \eqref{subpro-iBPGM} has an unique solution $\bm{x}^{k,*}\in\mathrm{dom}\,P\cap\mathrm{int}\,\mathrm{dom}\,\phi$ (by Assumption \ref{assumA}5), then condition \eqref{inexcond-iBPGM} always holds at $\bm{x}^{k+1}=\widetilde{\bm{x}}^{k+1}=\bm{x}^{k,*}$ and hence it is achievable. When setting $\varepsilon_k\equiv0$ in (AbSC) or setting $\sigma=0$ in (ReSC), it means that $\bm{x}^{k+1}$ (equals to $\widetilde{\bm{x}}^{k+1}$) should be an optimal solution of the subproblem, and thus, our iBPGM readily reduces to the exact BPGM
studied in \cite{bbt2017descent,lu2018relatively,t2010approximation,zls2019simple}.
Moreover, when the conditions $\delta_k\equiv0$ and $\bm{x}^{k+1}\equiv\widetilde{\bm{x}}^{k+1}$ are imposed in (AbSC), condition \eqref{inexcond-iBPGM} with (AbSC) is equivalent to the inexact condition considered in \cite[Definition 2.12]{stonyakin2021inexact} for the subproblem \eqref{subpro-iBPGM}, and hence our iBPGM subsumes the inexact BPGM  studied in \cite{stonyakin2021inexact} as a special case.

Both inexact stopping criteria (AbSC) and (ReSC) are of two-point type (meaning that two points $\bm{x}^{k+1}$ and $\widetilde{\bm{x}}^{k+1}$ are used in the criterion), and they are inspired by the inexact stopping criterion proposed in our recent works \cite{clty2020an,yt2020bregman} for developing a new practical inexact Bregman proximal point algorithm. They may look unusual at the first glance, but actually provide a rather broad inexact framework in which the approximate solutions are handled through the error term $\Delta^{k}$ appearing on the left-hand-side of the optimality conditions, $\partial_{\delta_k}P$ (an approximation of $\partial P$), and the deviation $\mathcal{D}_{\phi}(\widetilde{\bm{x}}^{k+1}, \,\bm{x}^{k+1})$. Thus, our inexact framework is adaptable to various types of errors that may incur when solving the subproblem inexactly. In particular, the admissible deviation allows $\partial_{\delta_k}P$ and $\nabla \phi$ to be computed at two slightly \textit{different} points, respectively. Such a simple strategy would help to circumvent the potential feasibility difficulty of requiring $\bm{x}^{k+1}\in\mathrm{dom}\,P\cap\mathrm{dom}\,\nabla\phi$ by the exact BPGM studied in \cite{bbt2017descent,lu2018relatively,t2010approximation,zls2019simple} and by the inexact BPGM studied in \cite{stonyakin2021inexact}, as exemplified and discussed in Remark \ref{rem-feas}. We have also noticed the recent works by Rebegoldi et al. \cite{rbp2018bregman} and Kabbadj \cite{k2020inexact}.
In their inexact Bregman-type algorithms, both approaches allow for an $\varepsilon$-minimizer for the subproblem \eqref{subpro-iBPGM} at each iteration. However, verifying an $\varepsilon$-minimizer may not be straightforward in practice, as the optimal objective value of the subproblem is generally unknown a priori. In addition, all these existing inexact algorithms in \cite{k2020inexact,rbp2018bregman,stonyakin2021inexact} do not accommodate the use of a relative-type stopping criterion, which limits their flexibility in practical applications. Finally, we would like to point out that both the pair $(\bm{x}^{k+1}, \,\widetilde{\bm{x}}^{k+1})$ and the error pair $(\Delta^k, \delta_k)$ may not be unique. Any candidate satisfying condition \eqref{inexcond-iBPGM} under \textit{either} (AbSC) \textit{or} (ReSC) is acceptable in our inexact framework.

One can also observe that the main difference between (AbSC) and (ReSC) is the strategy to control the error incurred in the inexact minimization of the subproblem \eqref{subpro-iBPGM}. In (AbSC), at each iteration, the error $\|\Delta^k\| + \lambda^{-1}\delta_k + \mathcal{D}_{\phi}(\widetilde{\bm{x}}^{k+1}, \,\bm{x}^{k+1})$ is simply controlled by a prespecified tolerance parameter $\varepsilon_k$. This is a natural and common way to design an inexact stopping criterion. To guarantee the progressive accuracy and convergence under this type of criterion, certain summable conditions (see, for example, Theorem \ref{thm-fval-iBPGM-IC1}) are also required on the error-tolerance sequence $\{\varepsilon_k\}_{k=0}^{\infty}$, which may need careful tuning for the iBPGM to achieve good practical efficiency, as observed from Tables \ref{ResTable-nu1} and \ref{ResTable-nu001} in the numerical section. In contrast, in (ReSC), the error $\lambda^{-1}\delta_k + \mathcal{D}_{\phi}(\widetilde{\bm{x}}^{k+1}, \,\bm{x}^{k+1})$ is controlled by the difference $\sigma\mathcal{D}_{\phi}(\widetilde{\bm{x}}^{k+1}, \,\bm{x}^{k})$, where only a \textit{single} tolerance parameter $\sigma$ is involved and thus the corresponding parameter tuning is typically easier. However, the computation of $\mathcal{D}_{\phi}(\widetilde{\bm{x}}^{k+1}, \,\bm{x}^{k})$ may incur non-negligible extra cost, especially when the problem size is large. In our current convergence analysis, we also require $\|\Delta^k\|=0$ for (ReSC) to eliminate the error term $|\langle \Delta^k, \,\widetilde{\bm{x}}^{k+1} - \bm{x} \rangle|$ in \eqref{suffdes-iBPGM}, enabling the derivation of \eqref{sumFbd-tmp-IC23} in the subsequent analysis for Theorem \ref{thm-fval-iBPGM-IC2}. Although this requirement can be satisfied in many cases (see two examples in the following subsection), it may limit the applicability of (ReSC). Note that Bello-Cruz et al. \cite{bgk2023fista} recently incorporated a \textit{full} relative-type error rule in the FISTA, which can accommodate a nonzero error $\Delta^k$. Thus, it would be interesting to explore whether the requirement $\|\Delta^k\|=0$ can be removed in our methods, which we will leave for future research. The preliminary numerical study on the performances of our iBPGM under these two types of stopping criteria can be found in Section \ref{secnum}.

\subsection{Examples of verifying the stopping criteria}\label{sec:examples}

The practical implementations for verifying the stopping criteria (AbSC) and (ReSC) may strongly depend on the specific subproblem and the subsolver used. Below, we provide two examples to illustrate the basic idea of verifying these stopping criteria. The second one is similar to the test problem in our numerical experiments. For simplicity, we assume that the relative smoothness holds without a restricted set $\mathcal{X}$.

\paragraph{Example 1.} Consider that $P(\bm{x}):=h(B\bm{x})$, where $B\in\mathbb{R}^{m\times n}$ and $h:\mathbb{R}^m\to\mathbb{R}$ is a continuous convex function with $\mathrm{dom}\,h=\mathbb{R}^m$. In this case, the subproblem \eqref{subpro-iBPGM} can be equivalently expressed as
\begin{equation}\label{ex1-pro}
\min\limits_{\bm{x}}~H_k(\bm{x}) := h(B\bm{x}) + \langle \nabla f(\bm{x}^k), \,\bm{x}\rangle + \lambda\mathcal{D}_{\phi}(\bm{x},\,\bm{x}^k),
\end{equation}
which can be further equivalently rewritten as
\begin{equation*}
\begin{aligned}
\min\limits_{\bm{x},\,\bm{y}}~\,h(\bm{y}) + \langle \nabla f(\bm{x}^k), \,\bm{x}\rangle + \lambda\mathcal{D}_{\phi}(\bm{x},\,\bm{x}^k), \quad
~\,\mathrm{s.t.}~~ B\bm{x} = \bm{y}.
\end{aligned}
\end{equation*}
Then, the associated Lagrangian function is given by
\begin{equation*}
\mathcal{L}(\bm{x},\bm{y},\bm{z})
:= h(\bm{y}) + \langle \nabla f(\bm{x}^k), \,\bm{x}\rangle
+ \lambda\mathcal{D}_{\phi}(\bm{x},\,\bm{x}^k)
+ \langle \bm{z}, \,B\bm{x}-\bm{y}\rangle,
\end{equation*}
where $\bm{z}\in\mathbb{R}^m$ is the Lagrangian multiplier. Thus, the dual problem of \eqref{ex1-pro} can be derived as
\begin{equation}\label{ex1-dualpro}
\max\limits_{\bm{z}} ~G_k(\bm{z}) := -h^*(\bm{z})
+ \inf\limits_{\bm{x}}\left\{\langle\nabla f(\bm{x}^k) + B^{\top}\bm{z}, \,\bm{x}\rangle + \lambda\mathcal{D}_{\phi}(\bm{x},\,\bm{x}^k)\right\}.
\end{equation}
As discussed in \cite[Section 3.1]{bbt2017descent}, when $\phi$ is of Legendre type (see \cite[Page 258]{r1970convex}) and $\nabla\phi(\bm{x}^k) - \lambda^{-1}\left(\nabla f(\bm{x}^k) + B^{\top}\bm{z}\right)\in\mathrm{dom}\nabla\phi^*$, the above infimum is attainable at $p^k_{\lambda}\left(\nabla f(\bm{x}^k) + B^{\top}\bm{z}\right)$, where
\begin{equation*}
p^k_{\lambda}(\bm{u})
:=\nabla\phi^*\left(\nabla\phi(\bm{x}^k)-\lambda^{-1}\bm{u}\right)
=\arg\min\limits_{\bm{x}}\left\{\langle\bm{u}, \,\bm{x}\rangle + \lambda\mathcal{D}_{\phi}(\bm{x},\,\bm{x}^k)\right\}
\end{equation*}
is computable for many commonly used kernel functions, e.g, the quadratic kernel function $\phi(\bm{x})=\frac{1}{2}\|\bm{x}\|^2$ and the entropy kernel function $\phi(\bm{x})=\sum_{i=1}^nx_{i}(\log x_{i}-1)$. Therefore, when both problems \eqref{ex1-pro} and \eqref{ex1-dualpro} are attainable and have zero duality gap, one could consider solving the subproblem \eqref{ex1-pro} by solving its dual problem \eqref{ex1-dualpro}. Indeed, if we have a maxmizing sequence $\{\bm{z}^{k,t}\}$ for \eqref{ex1-dualpro}, then
\begin{equation*}
\bm{x}^{k,t}
:=p^k_{\lambda}\big(\nabla f(\bm{x}^k) + B^{\top}\bm{z}^{k,t}\big)
=\nabla\phi^*\left(\nabla\phi(\bm{x}^k)-\lambda^{-1}\big(\nabla f(\bm{x}^k) + B^{\top}\bm{z}^{k,t}\big)\right)
\end{equation*}
will converge to the solution of problem \eqref{ex1-pro} and the duality gap $H_k(\bm{x}^{k,t})-G_k(\bm{z}^{k,t})$ will also converge to 0. Moreover, we see that
\begin{equation*}
\begin{aligned}
&\quad H_k(\bm{x}^{k,t})-G_k(\bm{z}^{k,t}) \\
&= h(B\bm{x}^{k,t}) + \langle \nabla f(\bm{x}^k), \,\bm{x}^{k,t}\rangle + \lambda\mathcal{D}_{\phi}(\bm{x}^{k,t},\,\bm{x}^k)
-\left(-h^*(\bm{z}^{k,t})
+ \langle\nabla f(\bm{x}^k) + B^{\top}\bm{z}^{k,t}, \,\bm{x}^{k,t}\rangle + \lambda\mathcal{D}_{\phi}(\bm{x}^{k,t},\,\bm{x}^k)\right) \\
&= h(B\bm{x}^{k,t}) + h^*(\bm{z}^{k,t}) - \langle\bm{z}^{k,t}, \,B\bm{x}^{k,t}\rangle.
\end{aligned}
\end{equation*}
This together with \cite[Proposition 1.2.1]{hl1993convex} implies that  $\bm{z}^{k,t}$ is a $\delta_{k,t}$-subgradient of $h$ at $B\bm{x}^{k,t}$, i.e., $\bm{z}^{k,t}\in\partial_{\delta_{k,t}}h(B\bm{x}^{k,t})$, with  $\delta_{k,t}:=H_k(\bm{x}^{k,t})-G_k(\bm{z}^{k,t})\to0$, and hence
\begin{equation*}
B^{\top}\bm{z}^{k,t}\in\partial_{\delta_{k,t}}P(\bm{x}^{k,t}).
\end{equation*}
On the other hand, from the construction of $\bm{x}^{k,t}$, we have that
\begin{equation*}
-B^{\top}\bm{z}^{k,t}=\nabla f(\bm{x}^k) + \lambda\big(\nabla \phi(\bm{x}^{k,t})-\nabla \phi(\bm{x}^{k})\big).
\end{equation*}
Consequently, combining the above two relations induces that
\begin{equation*}
\Delta^{k,t}:=0 \in \partial_{\delta_{k,t}} P(\widetilde{\bm{x}}^{k,t}) + \nabla f(\bm{x}^k) + \lambda\big(\nabla \phi(\bm{x}^{k,t}) - \nabla \phi(\bm{x}^{k})\big) ~~\mbox{with}~~\widetilde{\bm{x}}^{k,t}=\bm{x}^{k,t},
\end{equation*}
which falls into the form of condition \eqref{inexcond-iBPGM}. Thus, our stopping criteria (AbSC) and (ReSC) are checkable at the pair $(\bm{x}^{k,t},\bm{x}^{k,t})$ associated with the error pair $(0,\delta_{k,t})$, and they reduce to the verification of $\lambda^{-1}\delta_{k,t}\leq\varepsilon_k$ and $\lambda^{-1}\delta_{k,t}\leq\sigma\mathcal{D}_{\phi}(\bm{x}^{k,t}, \,\bm{x}^{k})$, respectively. When $\varepsilon_k\neq0$ and $\bm{x}^k$ is not the solution of the subproblem \eqref{ex1-pro} (hence $\mathcal{D}_{\phi}(\bm{x}^{k,t}, \,\bm{x}^{k})$ cannot approach zero), these two inequalities must hold after finitely
many $t$ iterations.

\paragraph{Example 2.} Consider that $P$ is an indicator function $\iota_{\mathcal{C}}$ on a convex polyhedral set $\mathcal{C}:=\{\bm{x}\in\mathbb{R}^n:A\bm{x}=\bm{b}, \,\bm{x}\geq0\}$ with $A\in\mathbb{R}^{m\times n}$ and $\bm{b}\in\mathbb{R}^m$, and consider the entropy function $\phi(\bm{x})=\sum_{i=1}^nx_{i}(\log x_{i}-1)$ as the kernel function with {\rm dom}\,$\phi  =\mathbb{R}^n_+$. In this case, the subproblem \eqref{subpro-iBPGM} can be equivalently expressed as
\begin{equation}\label{ex2-pro}
\begin{aligned}
\min\limits_{\bm{x}}~\langle \nabla f(\bm{x}^k), \,\bm{x}\rangle
+ \lambda\mathcal{D}_{\phi}(\bm{x},\,\bm{x}^k) \quad \mathrm{s.t.}
\quad A\bm{x}=\bm{b}.
\end{aligned}
\end{equation}
Then, the associated Lagrangian function is given by
\begin{equation*}
\mathcal{L}(\bm{x},\bm{z})
:= \langle \nabla f(\bm{x}^k), \,\bm{x}\rangle
+ \lambda\mathcal{D}_{\phi}(\bm{x},\,\bm{x}^k)
+ \langle \bm{z}, \,A\bm{x}-\bm{b}\rangle,
\end{equation*}
where $\bm{z}\in\mathbb{R}^m$ is the Lagrangian multiplier. Thus, the dual problem of \eqref{ex2-pro} can be derived as
\begin{equation}\label{ex2-dualpro}
\max\limits_{\bm{z}} ~
-\langle \bm{z}, \,\bm{b}\rangle
+ \inf\limits_{\bm{x}}\left\{
\langle\nabla f(\bm{x}^k) + A^{\top}\bm{z}, \,\bm{x}\rangle + \lambda\mathcal{D}_{\phi}(\bm{x},\,\bm{x}^k)\right\}.
\end{equation}
Since the entropy function $\phi$ is of Legendre type and $\mathrm{dom}\nabla\phi^*=\mathbb{R}^n$, the above infimum is always attainable at $\nabla\phi^*\left(\nabla\phi(\bm{x}^k)-\lambda^{-1}\big(\nabla f(\bm{x}^k)
+ A^{\top}\bm{z}\big)\right)$ for any $\bm{z}$. Moreover, when the feasible set $\mathcal{C}$ is bounded and the intersection $\mathcal{C}\cap\mathbb{R}^{n}_{++}$ is nonempty (as is the case in our experiments), using similar arguments for proving \cite[Proposition 2]{clty2020an}, one can show that both problems \eqref{ex2-pro} and \eqref{ex2-dualpro} have solutions, and for any optimal solution $\bm{z}^{k,*}$ of problem \eqref{ex2-dualpro}, $\bm{x}^{k,*}:=\nabla\phi^*\left(\nabla\phi(\bm{x}^k)-\lambda^{-1}\big(\nabla f(\bm{x}^k) + A^{\top}\bm{z}^{k,*}\big)\right)\in\mathrm{int}\,\mathrm{dom}\,\phi$ is the optimal solution of problem \eqref{ex2-pro}. Thus, we can solve the subproblem \eqref{ex2-pro} by solving its dual problem \eqref{ex2-dualpro}. Indeed, if we have a maxmizing sequence $\{\bm{z}^{k,t}\}$ for \eqref{ex2-dualpro}, then
\begin{equation*}
\bm{x}^{k,t}:=\nabla\phi^*\left(\nabla\phi(\bm{x}^k)-\lambda^{-1}\big(\nabla f(\bm{x}^k) + A^{\top}\bm{z}^{k,t}\big)\right) ~~\to~~ \bm{x}^{k,*}.
\end{equation*}
But one should note that $\bm{x}^{k,t}$ may not be exactly feasible for problem \eqref{ex2-pro}, namely, $\bm{x}^{k,t}\notin\mathcal{C}$. Hence, let $\mathcal{G}_{\mathcal{C}}(\cdot)$ be the projection or rounding operator on $\mathcal{C}$ such that $\mathcal{G}_{\mathcal{C}}(\bm{u})\in\mathcal{C}$ for any $\bm{u}\in\mathbb{R}^n$, and set $\widetilde{\bm{x}}^{k,t}:=\mathcal{G}_{\mathcal{C}}(\bm{x}^{k,t})$. Then, for any $\bm{u}\in\mathcal{C}$,
\begin{equation*}
\begin{aligned}
&\quad \langle - \nabla f(\bm{x}^k) - \lambda\big(\nabla \phi(\bm{x}^{k,t}) - \nabla \phi(\bm{x}^{k})\big), \,\bm{u}-\widetilde{\bm{x}}^{k,t}\rangle \\
&= \langle - \nabla f(\bm{x}^k) - A^{\top}\bm{z}^{k,t} - \lambda\big(\nabla \phi(\bm{x}^{k,t}) - \nabla \phi(\bm{x}^{k})\big), \,\bm{u}-\widetilde{\bm{x}}^{k,t}\rangle = 0,
\end{aligned}
\end{equation*}
which the first equality follows from $\langle A^{\top}\bm{z}^{k,t},\,\bm{u}-\widetilde{\bm{x}}^{k,t}\rangle=\langle \bm{z}^{k,t},\,A\bm{u}-A\widetilde{\bm{x}}^{k,t}\rangle=0$ and the second equality follows from the construction of $\bm{x}^{k,t}$. The above relation readily implies that
\begin{equation*}
\Delta^{k,t}:=0 \in \partial_{\delta_{k,t}:=0}\iota_{\mathcal{C}}(\widetilde{\bm{x}}^{k,t}) + \nabla f(\bm{x}^k) + \lambda\big(\nabla \phi(\bm{x}^{k,t}) - \nabla \phi(\bm{x}^{k})\big),
\end{equation*}
which falls into the form of condition \eqref{inexcond-iBPGM}. Thus, our stopping criteria (AbSC) and (ReSC) are checkable at the pair $(\widetilde{\bm{x}}^{k,t},\bm{x}^{k,t})$ associated with the error pair $(0,0)$, and they reduce to the verification of $\mathcal{D}_{\phi}(\widetilde{\bm{x}}^{k,t}, \,\bm{x}^{k,t})\leq\varepsilon_k$ and $\mathcal{D}_{\phi}(\widetilde{\bm{x}}^{k,t}, \,\bm{x}^{k,t})\leq\sigma\mathcal{D}_{\phi}(\bm{x}^{k,t}, \,\bm{x}^{k})$, respectively. Note from $\bm{x}^{k,t}\to\bm{x}^{k,*}\in\mathrm{int}\,\mathrm{dom}\,\phi$ that $\mathcal{D}_{\phi}(\widetilde{\bm{x}}^{k,t}, \,\bm{x}^{k,t})$ goes to zero as long as $\widetilde{\bm{x}}^{k,t}-\bm{x}^{k,t}\to0$. Therefore, when $\widetilde{\bm{x}}^{k,t}-\bm{x}^{k,t}\to0$, $\varepsilon_k\neq0$ and $\bm{x}^k$ is not the solution of the subproblem \eqref{ex2-pro} (hence $\mathcal{D}_{\phi}(\bm{x}^{k,t}, \,\bm{x}^{k})$ cannot approach zero), these two inequalities must hold after finitely many $t$ iterations.

\begin{remark}[\textbf{Comments on the underlying feasibility difficulty}]\label{rem-feas}
We would like to highlight the underlying feasibility difficulty when solving the subproblem in the BPGM, using \textbf{Example 2}. Indeed, when employing the exact BPGM in \cite{bbt2017descent,lu2018relatively,t2010approximation,zls2019simple} or the inexact BPGM in \cite{stonyakin2021inexact}, one needs to find the exact solution or an inexact solution $\bar{\bm{x}}^{k+1}$ that should be strictly contained in $\mathrm{dom}\,P\cap\mathrm{dom}\,\nabla\phi = \{\bm{x}\in\mathbb{R}^n:A\bm{x}=\bm{b}, \,\bm{x}>0\} =: \mathrm{relin}\,\mathcal{C}$ (namely, the relative interior of $\mathcal{C}$). However, for most choices of $(A,\bm{b})$, a subsolver for solving \eqref{ex2-pro} or its dual \eqref{ex2-dualpro} may only return a candidate approximate solution $\bm{x}^{k,t}$ that is located outside $\mathcal{C}$. Hence, one has to further perform a proper projection/rounding step at $\bm{x}^{k,t}$ to compute an approximate solution $\bar{\bm{x}}^{k+1}$ in $\mathrm{relin}\,\mathcal{C}$, and then check the inexact condition at $\bar{\bm{x}}^{k,t}$. Due to the non-closedness of $\mathrm{relin}\,\mathcal{C}$, this step is indeed difficult to implement especially when some entries of the exact solution are very close to zero. In contrast, as shown in \textbf{Example 2}, our two-point-type inexact condition requires separately $\bm{x}^{k,t} \in
\mathcal{X}\cap\mathrm{int}\,\mathrm{dom}\,\phi$ and $\widetilde{\bm{x}}^{k,t} \in \mathrm{dom}\,P\cap\mathrm{dom}\,\phi=\mathcal{C}$, where $\mathcal{X}\supseteq\mathrm{dom}\,P\cap\mathrm{dom}\,\phi$ can be any closed convex set satisfying Assumption \ref{assumA}3. The point $\bm{x}^{k,t}$ (which is generally located outside $\mathcal{C}$) can be easily obtained from the subsolver and the point $\widetilde{\bm{x}}^{k,t}$ can be computed by performing a proper projection/rounding step at $\bm{x}^{k,t}$ over $\mathcal{C}$, which is clearly easier and more practical than computing a point in $\mathrm{relin}\,\mathcal{C}$. In this regard, our iBPGM is more favorable than the exact BPGM in \cite{bbt2017descent,lu2018relatively,t2010approximation,zls2019simple} and the inexact BPGM  in \cite{stonyakin2021inexact}. More discussions on the potential advantages of such a two-point-type inexact condition over existing ones can be found in \cite[Section 4]{yt2020bregman}.
\end{remark}

\subsection{Convergence analysis for the iBPGM}

We next establish the convergence of our iBPGM under the (AbSC) or (ReSC) when solving the subproblems. The analysis is inspired by \cite{bbt2017descent,lu2018relatively}, but is more involved due to the approximate minimization of the subproblem under (AbSC) or (ReSC). We first give the following sufficient-descent-like property.

\begin{lemma}\label{lem-suffdes-iBPGM}
Suppose that Assumption \ref{assumA} holds. Let $\{\bm{x}^k\}$ and $\{\widetilde{\bm{x}}^k\}$ be the sequences generated by the iBPGM in Algorithm \ref{algo-iBPGM}. Then, for any $k\geq0$ and any $\bm{x}\in\mathrm{dom}\,P\cap\mathrm{dom}\,\phi$,
\begin{equation}\label{suffdes-iBPGM}
\begin{aligned}
F(\widetilde{\bm{x}}^{k+1}) - F(\bm{x})
&\leq \lambda\mathcal{D}_{\phi}(\bm{x},\,\bm{x}^{k})
- \lambda\mathcal{D}_{\phi}(\bm{x},\,\bm{x}^{k+1})
- (\lambda-L)\mathcal{D}_{\phi}(\widetilde{\bm{x}}^{k+1},\,\bm{x}^{k}) \\
&\qquad
+ \lambda\big(\mathcal{D}_{\phi}(\widetilde{\bm{x}}^{k+1},\,\bm{x}^{k+1})
+ \lambda^{-1}\delta_k\big)
+ |\langle \Delta^k, \,\widetilde{\bm{x}}^{k+1} - \bm{x} \rangle|.
\end{aligned}
\end{equation}
\end{lemma}
\begin{proof}
The desired result can be easily obtained by applying Lemma \ref{supplem} with $\widetilde{\bm{x}}^*=\widetilde{\bm{x}}^{k+1}$, $\bm{x}^*=\bm{x}^{k+1}$ and $\bm{y}=\bm{x}^k$.
\end{proof}

We then have the following results concerning the convergence of the function value.

\begin{theorem}[\textbf{Convergence rate of the iBPGM with (AbSC)}]\label{thm-fval-iBPGM-IC1}
Suppose that Assumption \ref{assumA} holds and $\lambda \geq L$. Let $\{\bm{x}^k\}$ and $\{\widetilde{\bm{x}}^k\}$ be the sequences generated by the iBPGM with (AbSC) in Algorithm \ref{algo-iBPGM}, and let $e(\bm{x}):=F(\bm{x})-F^*$, where $F^*:=\inf\left\{F(\bm{x}) : \bm{x}\in\mathcal{Q} \right\}>-\infty$. The following statements hold.
\begin{itemize}
\item[{\rm (i)}] (\textbf{Summability}) For any $\bm{x}\in\mathrm{dom}\,P\cap\mathrm{dom}\,\phi$, we have that
    \begin{equation*}
    \begin{aligned}
    (\lambda-L){\textstyle\sum^k_{i=0}}\mathcal{D}_{\phi}(\widetilde{\bm{x}}^{i+1},\,\bm{x}^{i})
    \leq
    \lambda\mathcal{D}_{\phi}(\bm{x},\,\bm{x}^{0})
    + (k+1)e(\bm{x})
    + (\|\bm{x}\|+\lambda){\textstyle\sum^k_{i=0}}\varepsilon_{i}
    + {\textstyle\sum^k_{i=0}}|\langle \Delta^i, \,\widetilde{\bm{x}}^{i+1} \rangle|.
    \end{aligned}
    \end{equation*}
    Moreover, if problem \eqref{mainpro} has an optimal solution $\bm{x}^*\in\mathrm{dom}\,\phi$, $\lambda>L$, $\sum\varepsilon_{k}<\infty$ and $\sum|\langle \Delta^k, \,\widetilde{\bm{x}}^{k+1} \rangle|<\infty$, then $\sum^{\infty}_{k=0}\mathcal{D}_{\phi}(\widetilde{\bm{x}}^{k+1},\,\bm{x}^{k})<\infty$.

\item[{\rm (ii)}] (\textbf{At the averaged iterate}) For any $k\geq0$ and any $\bm{x}\in\mathrm{dom}\,P\cap\mathrm{dom}\,\phi$, we have that
                 \begin{equation}\label{compavgk-iBPGM}
                 \begin{aligned}
                 &\quad F\left(\frac{1}{k+1}\sum^k_{i=0}\widetilde{\bm{x}}^{i+1}\right) - F(\bm{x})  \\
                 &\leq \frac{1}{k+1}\left( \lambda\mathcal{D}_{\phi}(\bm{x},\,\bm{x}^{0})
                 + (\|\bm{x}\|+\lambda)\sum^k_{i=0}\varepsilon_{i}
                 + \sum^k_{i=0}|\langle \Delta^i, \,\widetilde{\bm{x}}^{i+1} \rangle|\right).
                 \end{aligned}
                 \end{equation}
                 Moreover, if $\frac{1}{k}\sum^{k-1}_{i=0}\varepsilon_i\to0$ and $\frac{1}{k}\sum^{k-1}_{i=0}|\langle \Delta^i, \,\widetilde{\bm{x}}^{i+1} \rangle|\to0$, we have that $F\big({\textstyle\frac{1}{k}\sum^{k-1}_{i=0}}\,\widetilde{\bm{x}}^{i+1}\big) \to F^*$. In addition, if problem \eqref{mainpro} has an optimal solution $\bm{x}^*\in\mathrm{dom}\,\phi$, $\sum\varepsilon_k<\infty$ and $\sum|\langle \Delta^k, \,\widetilde{\bm{x}}^{k+1} \rangle|<\infty$, we have that
                 \begin{equation*}
                 F\left(\frac{1}{k}\sum^{k-1}_{i=0}\,\widetilde{\bm{x}}^{i+1}\right) - F^* \leq \mathcal{O}\left(\frac{1}{k}\right).
                 \end{equation*}

\item[{\rm (iii)}] (\textbf{At the last iterate}) For any $k\geq0$ and any $\bm{x}\in\mathrm{dom}\,P\cap\mathrm{dom}\,\phi$, we have that
                 \begin{equation}\label{compk-iBPGM}
                 \begin{aligned}
                 &\quad F(\widetilde{\bm{x}}^{k+1}) - F(\bm{x})  \\
                 &\leq \frac{1}{k+1}\left( \lambda\mathcal{D}_{\phi}(\bm{x},\,\bm{x}^{0})
                 + (\|\bm{x}\|+2\lambda)\sum^k_{i=0}\big(\varepsilon_i + i\varepsilon_{i}\big)
                 + \sum^k_{i=0}\big(|\langle \Delta^i, \,\widetilde{\bm{x}}^{i+1} \rangle| + i|\langle \Delta^i, \,\widetilde{\bm{x}}^{i+1} - \widetilde{\bm{x}}^i \rangle|\big) \right).
                 \end{aligned}
                 \end{equation}
                 Moreover, if $\frac{1}{k}\sum^{k-1}_{i=0}i\varepsilon_i\to0$ and $\frac{1}{k}\sum^{k-1}_{i=0}\big(|\langle \Delta^i, \,\widetilde{\bm{x}}^{i+1} \rangle| + i|\langle \Delta^i, \,\widetilde{\bm{x}}^{i+1} - \widetilde{\bm{x}}^i \rangle|\big)\to0$, we have that $F(\widetilde{\bm{x}}^k) \to F^*$. In addition, if problem \eqref{mainpro} has an optimal solution $\bm{x}^*\in\mathrm{dom}\,\phi$, $\sum k\varepsilon_k<\infty$ and $\sum\big(|\langle \Delta^k, \,\widetilde{\bm{x}}^{k+1} \rangle| + k|\langle \Delta^k, \,\widetilde{\bm{x}}^{k+1} - \widetilde{\bm{x}}^k \rangle|\big)<\infty$, we have that
                 \begin{equation*}
                 F(\widetilde{\bm{x}}^k) - F^* \leq \mathcal{O}\left(\frac{1}{k}\right).
                 \end{equation*}

\end{itemize}
\end{theorem}
\begin{proof}
\textit{Statement (i)}.
First, it follows from \eqref{suffdes-iBPGM} and (AbSC) that, for any $i\geq0$ and any $\bm{x}\in\mathrm{dom}\,P\cap\mathrm{dom}\,\phi$,
\begin{equation}\label{recuineqistar}
\begin{aligned}
&\quad F(\widetilde{\bm{x}}^{i+1}) - F(\bm{x})  \;\leq \;
 \lambda\mathcal{D}_{\phi}(\bm{x},\,\bm{x}^{i})
- \lambda\mathcal{D}_{\phi}(\bm{x},\,\bm{x}^{i+1})
- (\lambda-L)\mathcal{D}_{\phi}(\widetilde{\bm{x}}^{i+1},\,\bm{x}^{i})  \\
&\hspace{37mm} + |\langle \Delta^i, \,\widetilde{\bm{x}}^{i+1} - \bm{x} \rangle|
+ \lambda\big(\mathcal{D}_{\phi}(\widetilde{\bm{x}}^{i+1},\,\bm{x}^{i+1}) + \lambda^{-1}\delta_i\big) \\
&\leq \lambda\mathcal{D}_{\phi}(\bm{x},\,\bm{x}^{i})
- \lambda\mathcal{D}_{\phi}(\bm{x},\,\bm{x}^{i+1})
- (\lambda-L)\mathcal{D}_{\phi}(\widetilde{\bm{x}}^{i+1},\,\bm{x}^{i})
+ (\|\bm{x}\|+\lambda)\varepsilon_{i}
+ |\langle \Delta^i, \,\widetilde{\bm{x}}^{i+1} \rangle|,
\end{aligned}
\end{equation}
which, together with $F(\widetilde{\bm{x}}^{i+1}) \geq F^*$ for all $i\geq0$, implies that
\begin{equation*}
(\lambda-L)\mathcal{D}_{\phi}(\widetilde{\bm{x}}^{i+1},\,\bm{x}^{i})
\leq \lambda\mathcal{D}_{\phi}(\bm{x},\,\bm{x}^{i})
- \lambda\mathcal{D}_{\phi}(\bm{x},\,\bm{x}^{i+1})
+ e(\bm{x})
+ (\|\bm{x}\|+\lambda)\varepsilon_{i}
+ |\langle \Delta^i, \,\widetilde{\bm{x}}^{i+1} \rangle|.
\end{equation*}
Summing this inequality from $i=0$ to $i=k$ yields
\begin{equation*}
\begin{aligned}
&\quad (\lambda-L){\textstyle\sum^k_{i=0}}\mathcal{D}_{\phi}(\widetilde{\bm{x}}^{i+1},\,\bm{x}^{i}) \\
&\leq
\lambda\mathcal{D}_{\phi}(\bm{x},\,\bm{x}^{0})
- \lambda\mathcal{D}_{\phi}(\bm{x},\,\bm{x}^{k+1})
+ (k+1)e(\bm{x})
+ (\|\bm{x}\|+\lambda){\textstyle\sum^k_{i=0}}\varepsilon_{i}
+ {\textstyle\sum^k_{i=0}}|\langle \Delta^i, \,\widetilde{\bm{x}}^{i+1} \rangle| \\
&\leq \lambda\mathcal{D}_{\phi}(\bm{x},\,\bm{x}^{0})
+ (k+1)e(\bm{x})
+ (\|\bm{x}\|+\lambda){\textstyle\sum^k_{i=0}}\varepsilon_{i}
+ {\textstyle\sum^k_{i=0}}|\langle \Delta^i, \,\widetilde{\bm{x}}^{i+1} \rangle|.
\end{aligned}
\end{equation*}
Moreover, if problem \eqref{mainpro} has an optimal solution $\bm{x}^*\in\mathrm{dom}\,\phi$, we can substitute $\bm{x}^*$ in the above relation and obtain from $e(\bm{x}^*)=0$ that
\begin{equation*}
(\lambda-L){\textstyle\sum^k_{i=0}}\mathcal{D}_{\phi}(\widetilde{\bm{x}}^{i+1},\,\bm{x}^{i})
\leq \lambda\mathcal{D}_{\phi}(\bm{x}^*,\,\bm{x}^{0})
+ (\|\bm{x}^*\|+\lambda){\textstyle\sum^k_{i=0}}\varepsilon_{i}
+ {\textstyle\sum^k_{i=0}}|\langle \Delta^i, \,\widetilde{\bm{x}}^{i+1} \rangle|.
\end{equation*}
Thus, if, in addition, $\lambda>L$, $\sum\varepsilon_{k}<\infty$ and $\sum|\langle \Delta^k, \,\widetilde{\bm{x}}^{k+1} \rangle|<\infty$, one can conclude that $\sum^{\infty}_{k=0}\mathcal{D}_{\phi}(\widetilde{\bm{x}}^{k+1},\,\bm{x}^{k})<\infty$.

\textit{Statement (ii)}.
From \eqref{recuineqistar} and $\lambda \geq L$, we see that, for any $i\geq0$,
\begin{equation*}
\begin{aligned}
F(\widetilde{\bm{x}}^{i+1}) - F(\bm{x})
\leq \lambda\mathcal{D}_{\phi}(\bm{x},\,\bm{x}^{i})
- \lambda\mathcal{D}_{\phi}(\bm{x},\,\bm{x}^{i+1})
+ (\|\bm{x}\|+\lambda)\varepsilon_{i}
+ |\langle \Delta^i, \,\widetilde{\bm{x}}^{i+1} \rangle|.
\end{aligned}
\end{equation*}
Summing this inequality from $i=0$ to $i=k$ gives
\begin{equation}\label{sumFbd}
\begin{aligned}
&\quad {\textstyle\sum^k_{i=0}}F(\widetilde{\bm{x}}^{i+1}) - (k+1)F(\bm{x})  \\
&\leq \lambda\mathcal{D}_{\phi}(\bm{x},\,\bm{x}^{0})
- \lambda\mathcal{D}_{\phi}(\bm{x},\,\bm{x}^{k+1})
+ (\|\bm{x}\|+\lambda){\textstyle\sum^k_{i=0}}\varepsilon_{i}
+ {\textstyle\sum^k_{i=0}}|\langle \Delta^i, \,\widetilde{\bm{x}}^{i+1} \rangle|.
\end{aligned}
\end{equation}
Using this inequality and $F\big(\frac{1}{k+1}\sum^k_{i=0}\widetilde{\bm{x}}^{i+1}\big)\leq\frac{1}{k+1}\sum^k_{i=0}F(\widetilde{\bm{x}}^{i+1})$ (by the convexity of $F$), we obtain \eqref{compavgk-iBPGM}.

Moreover, if $\frac{1}{k}\sum^{k-1}_{i=0}\varepsilon_i\to0$ and $\frac{1}{k}\sum^{k-1}_{i=0}|\langle \Delta^i, \,\widetilde{\bm{x}}^{i+1} \rangle|\to0$, one can see from \eqref{compavgk-iBPGM} that
\begin{equation*}
\limsup\limits_{k\to\infty}\,
F\big({\textstyle\frac{1}{k}\sum^{k-1}_{i=0}}\,\widetilde{\bm{x}}^{i+1}\big) \leq F(\bm{x}), \quad \forall\,\bm{x}\in\mathrm{dom}\,P\cap\mathrm{dom}\,\phi.
\end{equation*}
This together with $\frac{1}{k}\sum^{k-1}_{i=0}\widetilde{\bm{x}}^{i+1}\in\mathrm{dom}\,P\cap\mathrm{dom}\,\phi\subseteq
\mathrm{dom}\,P\cap\mathrm{dom}\,f\cap Q$ and \eqref{infeq} implies that
\begin{equation*}
F^*
\leq\liminf\limits_{k\to\infty}\,
F\big({\textstyle\frac{1}{k}\sum^{k-1}_{i=0}}\,\widetilde{\bm{x}}^{i+1}\big)  \leq\limsup\limits_{k\to\infty}\,
F\big({\textstyle\frac{1}{k}\sum^{k-1}_{i=0}}\,\widetilde{\bm{x}}^{i+1}\big)
\leq F^*,
\end{equation*}
from which, we can conclude that $F\big({\textstyle\frac{1}{k}\sum^{k-1}_{i=0}}\widetilde{\bm{x}}^{i+1}\big) \to F^*$.

In addition, if problem \eqref{mainpro} has an optimal solution $\bm{x}^*\in\mathrm{dom}\,\phi$, $\sum\varepsilon_k<\infty$ and $\sum|\langle \Delta^k, \,\widetilde{\bm{x}}^{k+1} \rangle|<\infty$, one can see from \eqref{compavgk-iBPGM} with $\bm{x}^*$ in place of $\bm{x}$ that $F\big(\frac{1}{k}\sum^{k-1}_{i=0}\widetilde{\bm{x}}^{i+1}\big) - F^* \leq \mathcal{O}(k^{-1})$.

\textit{Statement (iii)}.
For any $i\geq0$, by setting $\bm{x}=\widetilde{\bm{x}}^i$ in \eqref{suffdes-iBPGM}, we obtain
\begin{equation*}
\begin{aligned}
F(\widetilde{\bm{x}}^{i+1}) - F(\widetilde{\bm{x}}^i)
&\leq \lambda\mathcal{D}_{\phi}(\widetilde{\bm{x}}^i,\,\bm{x}^{i})
- \lambda\mathcal{D}_{\phi}(\widetilde{\bm{x}}^i,\,\bm{x}^{i+1})
- (\lambda-L)\mathcal{D}_{\phi}(\widetilde{\bm{x}}^{i+1},\,\bm{x}^{i})  \\
&\qquad + \lambda\big(\mathcal{D}_{\phi}(\widetilde{\bm{x}}^{i+1},\,\bm{x}^{i+1}) + \lambda^{-1}\delta_i\big)
+ |\langle \Delta^i, \,\widetilde{\bm{x}}^{i+1} - \widetilde{\bm{x}}^i \rangle| \\
&\leq \lambda(\varepsilon_{i}+\varepsilon_{i-1})
+ |\langle \Delta^i, \,\widetilde{\bm{x}}^{i+1} - \widetilde{\bm{x}}^i \rangle|,
\end{aligned}
\end{equation*}
where the last inequality follows from $\lambda \geq L$ and (AbSC). Then, we see that, for any $i\geq0$,
\begin{equation*}
\begin{aligned}
iF(\widetilde{\bm{x}}^{i+1})
&\leq iF(\widetilde{\bm{x}}^{i})
+ \lambda i(\varepsilon_{i}+\varepsilon_{i-1})
+ i|\langle \Delta^i, \,\widetilde{\bm{x}}^{i+1} - \widetilde{\bm{x}}^i \rangle| \\
~~\Longleftrightarrow~~
F(\widetilde{\bm{x}}^{i+1})
&\geq (i+1)F(\widetilde{\bm{x}}^{i+1}) - iF(\widetilde{\bm{x}}^{i})
- \lambda i(\varepsilon_{i}+\varepsilon_{i-1})
- i|\langle \Delta^i, \,\widetilde{\bm{x}}^{i+1} - \widetilde{\bm{x}}^i \rangle|,
\end{aligned}
\end{equation*}
which can induce that
\begin{equation*}
{\textstyle\sum^{k}_{i=0}}F(\widetilde{\bm{x}}^{i+1})
\geq (k+1)F(\widetilde{\bm{x}}^{k+1})
- \lambda{\textstyle\sum^{k}_{i=0}}\,i(\varepsilon_{i}+\varepsilon_{i-1})
- {\textstyle\sum^{k}_{i=0}}\,i|\langle \Delta^i, \,\widetilde{\bm{x}}^{i+1} - \widetilde{\bm{x}}^i \rangle|.
\end{equation*}
This together with \eqref{sumFbd} implies that
\begin{equation*}
\begin{aligned}
&\quad (k+1)\big(F(\widetilde{\bm{x}}^{k+1})-F(\bm{x})\big)  \\
&\leq {\textstyle\sum^{k}_{i=0}}F(\widetilde{\bm{x}}^{i+1})
- (k+1)F(\bm{x}) + \lambda{\textstyle\sum^{k}_{i=0}}\,i(\varepsilon_{i}+\varepsilon_{i-1})
+ {\textstyle\sum^{k}_{i=0}}\,i|\langle \Delta^i, \,\widetilde{\bm{x}}^{i+1} - \widetilde{\bm{x}}^i \rangle|   \\
&\leq \lambda\mathcal{D}_{\phi}(\bm{x},\,\bm{x}^{0})
- \lambda\mathcal{D}_{\phi}(\bm{x},\,\bm{x}^{k+1})
+ (\|\bm{x}\|+\lambda){\textstyle\sum^k_{i=0}}\,\varepsilon_i
+ \lambda{\textstyle\sum^{k}_{i=0}}\,i(\varepsilon_{i}+\varepsilon_{i-1}) \\
&\qquad
+ {\textstyle\sum^k_{i=0}}|\langle \Delta^i, \,\widetilde{\bm{x}}^{i+1} \rangle|
+ {\textstyle\sum^{k}_{i=0}}\,i|\langle \Delta^i, \,\widetilde{\bm{x}}^{i+1} - \widetilde{\bm{x}}^i \rangle| \\
&\leq \lambda\mathcal{D}_{\phi}(\bm{x},\,\bm{x}^{0})
- \lambda\mathcal{D}_{\phi}(\bm{x},\,\bm{x}^{k+1})
+ (\|\bm{x}\|+2\lambda){\textstyle\sum^k_{i=0}}\,\big(\varepsilon_i + i\varepsilon_{i}\big)  \\
&\qquad
+ {\textstyle\sum^k_{i=0}}\,\big(|\langle \Delta^i, \,\widetilde{\bm{x}}^{i+1} \rangle|
+ i|\langle \Delta^i, \,\widetilde{\bm{x}}^{i+1} - \widetilde{\bm{x}}^i \rangle|\big).
\end{aligned}
\end{equation*}
Dividing the above inequality by $k+1$, we can get \eqref{compk-iBPGM}.

Moreover, if $\frac{1}{k}\sum^{k-1}_{i=0}i\varepsilon_i\to0$ (which also yields $\frac{1}{k}\sum^{k-1}_{i=0}\varepsilon_i\to0$) and $\frac{1}{k}\sum^{k-1}_{i=0}\big(|\langle \Delta^i, \,\widetilde{\bm{x}}^{i+1} \rangle|
+ i|\langle \Delta^i, \,\widetilde{\bm{x}}^{i+1} - \widetilde{\bm{x}}^i \rangle|\big)\to0$, it is easy to see from \eqref{compk-iBPGM} that
\begin{equation*}
\limsup\limits_{k\to\infty}\,F(\widetilde{\bm{x}}^{k}) \leq F(\bm{x}), \quad \forall\,\bm{x}\in\mathrm{dom}\,P\cap\mathrm{dom}\,\phi.
\end{equation*}
This, together with $\widetilde{\bm{x}}^{k}\in\mathrm{dom}\,P\cap\mathrm{dom}\,\phi\subseteq
\mathrm{dom}\,P\cap\mathrm{dom}\,f\cap Q$ and \eqref{infeq}, implies that
\begin{equation*}
F^*
\leq\liminf\limits_{k\to\infty}\,F(\widetilde{\bm{x}}^{k})
\leq\limsup\limits_{k\to\infty}\,F(\widetilde{\bm{x}}^{k})
\leq F^*,
\end{equation*}
from which we can conclude that $F(\widetilde{\bm{x}}^{k}) \to F^*$.

In addition, $\sum k\varepsilon_k<\infty$ yields
$\sum\varepsilon_k<\infty$. Thus, if problem \eqref{mainpro} has an optimal solution $\bm{x}^*\in\mathrm{dom}\,\phi$, $\sum k\varepsilon_k<\infty$ and $\sum\big(|\langle \Delta^k, \,\widetilde{\bm{x}}^{k+1} \rangle|
+ k|\langle \Delta^k, \,\widetilde{\bm{x}}^{k+1} - \widetilde{\bm{x}}^k \rangle|\big)<\infty$, we can obtain from \eqref{compk-iBPGM} with $\bm{x}^*$ in place of $\bm{x}$ that $F(\widetilde{\bm{x}}^k) - F^* \leq \mathcal{O}(k^{-1})$. This completes the proof.
\end{proof}

\begin{remark}[\textbf{Comments on convergence rate of the iBPGM with (AbSC)}]\label{rek-comp-iBPGM}
We see from Theorem \ref{thm-fval-iBPGM-IC1} that, under proper summable-error conditions, the sequence of the function value achieves the $\mathcal{O}(1/k)$ convergence rate at both the averaged iterate ${\textstyle\frac{1}{k}\sum^{k-1}_{i=0}}\,\widetilde{\bm{x}}^{i+1}$ and the last iterate $\widetilde{\bm{x}}^k$. The latter result requires (unsurprisingly) stronger summability conditions and covers the related convergence results in \cite{bbt2017descent,lu2018relatively,zls2019simple} wherein each subproblem is solved exactly. Next we comment on how the condition $\sum|\langle \Delta^k, \,\widetilde{\bm{x}}^{k+1} \rangle|<\infty$ (or $\sum\big(|\langle \Delta^k, \,\widetilde{\bm{x}}^{k+1} \rangle| + k|\langle \Delta^k, \,\widetilde{\bm{x}}^{k+1} - \widetilde{\bm{x}}^k \rangle|\big)<\infty$) can be ensured. First, as is the case in our experiments, if no error occurs on the left-hand-side of the optimality condition \eqref{inexcond-iBPGM} (namely, $\Delta^k=0$)
when applying a certain subsolver for solving the subproblem, such summable conditions can be readily satisfied. Moreover, if one knows a priori that $\{\widetilde{\bm{x}}^{k}\}$ will be bounded (for example, when $\mathrm{dom}\,P\cap\mathrm{dom}\,\phi$ is bounded as the test problem in our numerical experiments), then it readily reduces to $\sum\|\Delta^k\|<\infty$ (or $\sum k\|\Delta^k\|<\infty$), which can be guaranteed by $\sum \varepsilon_k<\infty$ (or $\sum k\varepsilon_k<\infty$). In addition, one could also set an arbitrarily
summable nonnegative sequence $\{\eta_k\}$, and during the iterations, check that $|\langle \Delta^k, \,\widetilde{\bm{x}}^{k+1} \rangle|\leq\eta_k$ (or $|\langle \Delta^k, \,\widetilde{\bm{x}}^{k+1} \rangle| + k|\langle \Delta^k, \,\widetilde{\bm{x}}^{k+1} - \widetilde{\bm{x}}^k \rangle|\leq\eta_k$).
This then ensures that $\sum|\langle \Delta^k, \,\widetilde{\bm{x}}^{k+1} \rangle|<\infty$ (or $\sum\big(|\langle \Delta^k, \,\widetilde{\bm{x}}^{k+1} \rangle| + k|\langle \Delta^k, \,\widetilde{\bm{x}}^{k+1} - \widetilde{\bm{x}}^k \rangle|\big)<\infty$). The last summability requirement also indicates that if $\{\widetilde{\bm{x}}^{k}\}$ appears to be unbounded, one may need to drive $\Delta^{k}$ to zero more quickly.
\end{remark}

We next study the convergence rate of the iBPGM with (ReSC).

\begin{theorem}[\textbf{Convergence rate of the iBPGM with (ReSC)}]\label{thm-fval-iBPGM-IC2}
Suppose that Assumption \ref{assumA} holds, $\lambda>L$, and $0<\sigma<\frac{\lambda-L}{\lambda}$. Let $\{\bm{x}^k\}$ and $\{\widetilde{\bm{x}}^k\}$ be the sequences generated by the iBPGM with (ReSC) in Algorithm \ref{algo-iBPGM}, and let $e(\bm{x}):=F(\bm{x})-F^*$, where $F^*:=\inf\left\{F(\bm{x}): \bm{x}\in\mathcal{Q} \right\}>-\infty$. Then, the following statements hold.
\begin{itemize}
\item[{\rm (i)}] (\textbf{Summability}) For any $\bm{x}\in\mathrm{dom}\,P\cap\mathrm{dom}\,\phi$, we have that
    \begin{equation*}
    {\textstyle\sum^k_{i=0}}\mathcal{D}_{\phi}(\widetilde{\bm{x}}^{i+1},\,\bm{x}^{i})
    \leq (\lambda-L-\lambda\sigma)^{-1}\big(\lambda\mathcal{D}_{\phi}(\bm{x},\,\bm{x}^{0})
    + (k+1)e(\bm{x})\big).
    \end{equation*}
    Moreover, if problem \eqref{mainpro} has an optimal solution $\bm{x}^*\in\mathrm{dom}\,\phi$, then $\sum^{\infty}_{k=0}\mathcal{D}_{\phi}(\widetilde{\bm{x}}^{k+1},\,\bm{x}^{k})<\infty$ and  $\sum^{\infty}_{k=0}\mathcal{D}_{\phi}(\widetilde{\bm{x}}^{k+1},\,\bm{x}^{k+1})<\infty$.

\item[{\rm (ii)}] (\textbf{At the averaged iterate}) For any $k\geq0$ and any $\bm{x}\in\mathrm{dom}\,P\cap\mathrm{dom}\,\phi$, we have that
                 \begin{equation}\label{compavgk-iBPGM-IC23}
                 \begin{aligned}
                 F\left(\frac{1}{k+1}\sum^k_{i=0}\,\widetilde{\bm{x}}^{i+1}\right) - F(\bm{x})
                 \leq \frac{\lambda}{k+1}\mathcal{D}_{\phi}(\bm{x},\,\bm{x}^{0})
                 \end{aligned}
                 \end{equation}
                 and $F\big({\textstyle\frac{1}{k}\sum^{k-1}_{i=0}}\,\widetilde{\bm{x}}^{i+1}\big) \to F^*$. Moreover, if problem \eqref{mainpro} has an optimal solution $\bm{x}^*\in\mathrm{dom}\,\phi$, we have that
                 \begin{equation*}
                 F\left(\frac{1}{k}\sum^{k-1}_{i=0}\,\widetilde{\bm{x}}^{i+1}\right) - F^* \leq \mathcal{O}\left(\frac{1}{k}\right).
                 \end{equation*}

\item[{\rm (iii)}] (\textbf{At the last iterate}) For any $k\geq0$ and any $\bm{x}\in\mathrm{dom}\,P\cap\mathrm{dom}\,\phi$, we have that
                 \begin{equation}\label{compk-iBPGM-IC2}
                 \begin{aligned}
                 F(\widetilde{\bm{x}}^{k+1}) - F(\bm{x})
                 \leq \frac{1}{k+1}\left(
                 a\mathcal{D}_{\phi}(\bm{x},\bm{x}^{0})
                 + b(k+1)e(\bm{x})
                 + c\sum^{k}_{i=0}i\mathcal{D}_{\phi}(\widetilde{\bm{x}}^{i+1},\bm{x}^{i+1})\right).
                 \end{aligned}
                 \end{equation}
                 where $a:=(1+b)\lambda$, $b:=(\lambda-L-\lambda\sigma)^{-1}\sigma\lambda$, and $c:=2\lambda-(\lambda-L)\sigma^{-1}$. Moreover, if problem \eqref{mainpro} has an optimal solution $\bm{x}^*\in\mathrm{dom}\,\phi$; or $0<\sigma\leq\frac{\lambda-L}{2\lambda}$; or $\frac{1}{k}\sum^{k-1}_{i=0}i\mathcal{D}_{\phi}(\widetilde{\bm{x}}^{i+1},\bm{x}^{i+1})\to0$,        we have that $F(\widetilde{\bm{x}}^k) \to F^*$. In addition, if problem \eqref{mainpro} has an optimal solution $\bm{x}^*\in\mathrm{dom}\,\phi$ together with $0<\sigma\leq\frac{\lambda-L}{2\lambda}$ or $\sum^{\infty}_{k=0}k\mathcal{D}_{\phi}(\widetilde{\bm{x}}^{k+1},\bm{x}^{k+1})<\infty$, we have that
                 \begin{equation*}
                 F(\widetilde{\bm{x}}^k) - F^* \leq \mathcal{O}\left(\frac{1}{k}\right).
                 \end{equation*}

\end{itemize}
\end{theorem}
\begin{proof}
\textit{Statement (i)}.
First, it follows from \eqref{suffdes-iBPGM} and (ReSC) that, for any $i\geq0$ and any $\bm{x}\in\mathrm{dom}\,P\cap\mathrm{dom}\,\phi$,
\begin{equation}\label{sumFbd-tmp-IC23}
\begin{aligned}
&\quad F(\widetilde{\bm{x}}^{i+1}) - F(\bm{x})  \\
&\leq \lambda\mathcal{D}_{\phi}(\bm{x},\,\bm{x}^{i})
- \lambda\mathcal{D}_{\phi}(\bm{x},\,\bm{x}^{i+1})
- (\lambda-L)\mathcal{D}_{\phi}(\widetilde{\bm{x}}^{i+1},\,\bm{x}^{i})
+ \lambda\big(\mathcal{D}_{\phi}(\widetilde{\bm{x}}^{i+1},\,\bm{x}^{i+1}) + \lambda^{-1}\delta_i\big) \\
&\leq \lambda\mathcal{D}_{\phi}(\bm{x},\,\bm{x}^{i})
- \lambda\mathcal{D}_{\phi}(\bm{x},\,\bm{x}^{i+1})
- (\lambda-L-\lambda\sigma)\mathcal{D}_{\phi}(\widetilde{\bm{x}}^{i+1},\,\bm{x}^{i}),
\end{aligned}
\end{equation}
which, together with $F(\widetilde{\bm{x}}^{i+1}) \geq F^*$ for all $i\geq0$, implies that
\begin{equation*}
(\lambda-L-\lambda\sigma)\mathcal{D}_{\phi}(\widetilde{\bm{x}}^{i+1},\,\bm{x}^{i})
\leq \lambda\mathcal{D}_{\phi}(\bm{x},\,\bm{x}^{i})
- \lambda\mathcal{D}_{\phi}(\bm{x},\,\bm{x}^{i+1})
+ e(\bm{x}).
\end{equation*}
Summing this inequality from $i=0$ to $i=k$, we have that
\begin{equation*}
\begin{aligned}
(\lambda-L-\lambda\sigma){\textstyle\sum^k_{i=0}}\mathcal{D}_{\phi}(\widetilde{\bm{x}}^{i+1},\,\bm{x}^{i})
&\leq \lambda\mathcal{D}_{\phi}(\bm{x},\,\bm{x}^{0})
- \lambda\mathcal{D}_{\phi}(\bm{x},\,\bm{x}^{k+1})
+ (k+1)e(\bm{x}) \\
&\leq \lambda\mathcal{D}_{\phi}(\bm{x},\,\bm{x}^{0})
+ (k+1)e(\bm{x}),
\end{aligned}
\end{equation*}
which, together with $\lambda>L$ and $0<\sigma<\frac{\lambda-L}{\lambda}$, yields
\begin{equation*}
{\textstyle\sum^k_{i=0}}\mathcal{D}_{\phi}(\widetilde{\bm{x}}^{i+1},\,\bm{x}^{i})
\leq (\lambda-L-\lambda\sigma)^{-1}\big(\lambda\mathcal{D}_{\phi}(\bm{x},\,\bm{x}^{0})
+ (k+1)e(\bm{x})\big).
\end{equation*}
Moreover, by (ReSC), we have $\mathcal{D}_{\phi}(\widetilde{\bm{x}}^{i+1},\,\bm{x}^{i+1})\leq\sigma\mathcal{D}_{\phi}(\widetilde{\bm{x}}^{i+1},\,\bm{x}^{i})$ for all $i\geq0$. Thus, we obtain that
\begin{equation}\label{sumablediff-IC23}
\begin{aligned}
{\textstyle\sum^{k}_{i=0}}\mathcal{D}_{\phi}(\widetilde{\bm{x}}^{i+1},\,\bm{x}^{i+1})
&\leq
\sigma {\textstyle\sum^{k}_{i=0}}\mathcal{D}_{\phi}(\widetilde{\bm{x}}^{i+1},\,\bm{x}^{i}) \\
&\leq (\lambda-L-\lambda\sigma)^{-1}\sigma\big(\lambda\mathcal{D}_{\phi}(\bm{x},\,\bm{x}^{0})
+ (k+1)e(\bm{x})\big).
\end{aligned}
\end{equation}
Now, if problem \eqref{mainpro} has an optimal solution $\bm{x}^*\in\mathrm{dom}\,\phi$, we can substitute $\bm{x}^*$ in the above relations and obtain from $e(\bm{x}^*)=0$ that
\begin{equation*}
\begin{aligned}
{\textstyle\sum^k_{i=0}}\mathcal{D}_{\phi}(\widetilde{\bm{x}}^{i+1},\,\bm{x}^{i})
&\leq (\lambda-L-\lambda\sigma)^{-1}\lambda\mathcal{D}_{\phi}(\bm{x}^*,\,\bm{x}^{0}), \\
{\textstyle\sum^{k}_{i=0}}\mathcal{D}_{\phi}(\widetilde{\bm{x}}^{i+1},\,\bm{x}^{i+1})
&\leq (\lambda-L-\lambda\sigma)^{-1}\sigma\lambda\mathcal{D}_{\phi}(\bm{x}^*,\,\bm{x}^{0}).
\end{aligned}
\end{equation*}
Then, we have that $\sum^{\infty}_{k=0}\mathcal{D}_{\phi}(\widetilde{\bm{x}}^{k+1},\,\bm{x}^{k})<\infty$ and $\sum^{\infty}_{k=0}\mathcal{D}_{\phi}(\widetilde{\bm{x}}^{k+1},\,\bm{x}^{k+1})<\infty$.

\textit{Statement (ii)}.
From \eqref{sumFbd-tmp-IC23} with $\lambda>L$ and $0<\sigma<\frac{\lambda-L}{\lambda}$, we also have that, for any $i\geq0$,
\begin{equation*}
F(\widetilde{\bm{x}}^{i+1}) - F(\bm{x})
\leq \lambda\mathcal{D}_{\phi}(\bm{x},\,\bm{x}^{i})
- \lambda\mathcal{D}_{\phi}(\bm{x},\,\bm{x}^{i+1}).
\end{equation*}
Summing this inequality from $i=0$ to $i=k$ gives
\begin{equation}\label{sumFbd-IC2}
{\textstyle\sum^k_{i=0}}F(\widetilde{\bm{x}}^{i+1})
- (k+1)F(\bm{x})
\leq \lambda\mathcal{D}_{\phi}(\bm{x},\,\bm{x}^{0})
- \lambda\mathcal{D}_{\phi}(\bm{x},\,\bm{x}^{k+1})
\leq \lambda\mathcal{D}_{\phi}(\bm{x},\,\bm{x}^{0}),
\end{equation}
which, together with $F\big(\frac{1}{k+1}\sum^k_{i=0}\widetilde{\bm{x}}^{i+1}\big)\leq\frac{1}{k+1}\sum^k_{i=0}F(\widetilde{\bm{x}}^{i+1})$ (by the convexity of $F$), yields \eqref{compavgk-iBPGM-IC23}. Moreover, one can see from \eqref{compavgk-iBPGM-IC23} that
\begin{equation*}
\limsup\limits_{k\to\infty}\,
F\big({\textstyle\frac{1}{k}\sum^{k-1}_{i=0}}\,\widetilde{\bm{x}}^{i+1}\big) \leq F(\bm{x}), \quad \forall\,\bm{x}\in\mathrm{dom}\,P\cap\mathrm{dom}\,\phi.
\end{equation*}
This together with $\frac{1}{k}\sum^{k-1}_{i=0}\widetilde{\bm{x}}^{i+1}\in\mathrm{dom}\,P\cap\mathrm{dom}\,\phi\subseteq
\mathrm{dom}\,P\cap\mathrm{dom}\,f\cap Q$ and \eqref{infeq} implies that
\begin{equation*}
F^*
\leq\liminf\limits_{k\to\infty}\,
F\big({\textstyle\frac{1}{k}\sum^{k-1}_{i=0}}\,\widetilde{\bm{x}}^{i+1}\big)  \leq\limsup\limits_{k\to\infty}\,
F\big({\textstyle\frac{1}{k}\sum^{k-1}_{i=0}}\,\widetilde{\bm{x}}^{i+1}\big)
\leq F^*,
\end{equation*}
from which we can conclude that $F\big({\textstyle\frac{1}{k}\sum^{k-1}_{i=0}}\widetilde{\bm{x}}^{i+1}\big) \to F^*$. In addition, if problem \eqref{mainpro} has an optimal solution $\bm{x}^*\in\mathrm{dom}\,\phi$, one can see from \eqref{compavgk-iBPGM-IC23} with $\bm{x}^*$ in place of $\bm{x}$ that $F\big(\frac{1}{k}\sum^{k-1}_{i=0}\widetilde{\bm{x}}^{i+1}\big) - F^* \leq \mathcal{O}(k^{-1})$.

\textit{Statement (iii)}. For any $i\geq0$, by setting $\bm{x}=\widetilde{\bm{x}}^i$ in \eqref{suffdes-iBPGM}, we have
\begin{equation*}
\begin{aligned}
&\quad F(\widetilde{\bm{x}}^{i+1}) - F(\widetilde{\bm{x}}^i)
\;\leq \;\lambda\mathcal{D}_{\phi}(\widetilde{\bm{x}}^i,\,\bm{x}^{i})
- \lambda\mathcal{D}_{\phi}(\widetilde{\bm{x}}^i,\,\bm{x}^{i+1})
- (\lambda-L-\lambda\sigma)\mathcal{D}_{\phi}(\widetilde{\bm{x}}^{i+1},\,\bm{x}^{i}) \\
&\leq \lambda\mathcal{D}_{\phi}(\widetilde{\bm{x}}^i,\,\bm{x}^{i}) - (\lambda-L-\lambda\sigma)\sigma^{-1}\mathcal{D}_{\phi}(\widetilde{\bm{x}}^{i+1},\,\bm{x}^{i+1}) \\
&= \lambda\mathcal{D}_{\phi}(\widetilde{\bm{x}}^{i},\,\bm{x}^{i})
-
\lambda\mathcal{D}_{\phi}(\widetilde{\bm{x}}^{i+1},\,\bm{x}^{i+1})
+ \big[2\lambda-(\lambda-L)\sigma^{-1}\big]\mathcal{D}_{\phi}(\widetilde{\bm{x}}^{i+1},\,\bm{x}^{i+1}),
\end{aligned}
\end{equation*}
where the second inequality follows from $\sigma>0$ and $\mathcal{D}_{\phi}(\widetilde{\bm{x}}^{i+1},\,\bm{x}^{i+1})\leq\sigma\mathcal{D}_{\phi}(\widetilde{\bm{x}}^{i+1},\,\bm{x}^{i})$ for all $i\geq0$ by (ReSC). Then, it follows from the above relation that, for any $i\geq0$,
\begin{equation*}
i\big[F(\widetilde{\bm{x}}^{i+1})+\lambda\mathcal{D}_{\phi}(\widetilde{\bm{x}}^{i+1},\,\bm{x}^{i+1})\big] \leq i\big[F(\widetilde{\bm{x}}^{i})+\lambda\mathcal{D}_{\phi}(\widetilde{\bm{x}}^{i},\,\bm{x}^{i})\big] +\big[2\lambda-(\lambda-L)\sigma^{-1}\big]i\mathcal{D}_{\phi}(\widetilde{\bm{x}}^{i+1},\,\bm{x}^{i+1}),
\end{equation*}
which implies that
\begin{equation*}
\begin{aligned}
F(\widetilde{\bm{x}}^{i+1})+\lambda\mathcal{D}_{\phi}(\widetilde{\bm{x}}^{i+1},\,\bm{x}^{i+1})
&\geq (i+1)\big[F(\widetilde{\bm{x}}^{i+1})+\lambda\mathcal{D}_{\phi}(\widetilde{\bm{x}}^{i+1},\,\bm{x}^{i+1})\big]
-i\big[F(\widetilde{\bm{x}}^{i})+\lambda\mathcal{D}_{\phi}(\widetilde{\bm{x}}^{i},\,\bm{x}^{i})\big] \\
&\qquad  -\big[2\lambda-(\lambda-L)\sigma^{-1}\big]i\mathcal{D}_{\phi}(\widetilde{\bm{x}}^{i+1},\,\bm{x}^{i+1}).
\end{aligned}
\end{equation*}
Summing this inequality from $i=0$ to $i=k$ yields
\begin{equation*}
\begin{aligned}
&\quad {\textstyle\sum^{k}_{i=0}}\big[F(\widetilde{\bm{x}}^{i+1})+\lambda\mathcal{D}_{\phi}(\widetilde{\bm{x}}^{i+1},\,\bm{x}^{i+1})\big] \\
&\geq (k+1)\big[F(\widetilde{\bm{x}}^{k+1})+\lambda\mathcal{D}_{\phi}(\widetilde{\bm{x}}^{k+1},\,\bm{x}^{k+1})\big]
- \big[2\lambda-(\lambda-L)\sigma^{-1}\big]{\textstyle\sum^{k}_{i=0}}\,i\mathcal{D}_{\phi}(\widetilde{\bm{x}}^{i+1},\,\bm{x}^{i+1}) \\
&\geq (k+1)F(\widetilde{\bm{x}}^{k+1}) - \big[2\lambda-(\lambda-L)\sigma^{-1}\big]{\textstyle\sum^{k}_{i=0}}\,i\mathcal{D}_{\phi}(\widetilde{\bm{x}}^{i+1},\,\bm{x}^{i+1}).
\end{aligned}
\end{equation*}
This together with \eqref{sumFbd-IC2} implies that
\begin{equation*}
\begin{aligned}
&\quad (k+1)\big(F(\widetilde{\bm{x}}^{k+1})-F(\bm{x})\big) \\
&\leq {\textstyle\sum^{k}_{i=0}}F(\widetilde{\bm{x}}^{i+1}) - (k+1)F(\bm{x})
+ \lambda{\textstyle\sum^{k}_{i=0}}\mathcal{D}_{\phi}(\widetilde{\bm{x}}^{i+1},\,\bm{x}^{i+1})
+
\big[2\lambda-(\lambda-L)\sigma^{-1}\big]{\textstyle\sum^{k}_{i=0}}\,i\mathcal{D}_{\phi}(\widetilde{\bm{x}}^{i+1},\,\bm{x}^{i+1})
\\
&\leq \lambda\mathcal{D}_{\phi}(\bm{x},\,\bm{x}^{0})
+ \lambda{\textstyle\sum^{k}_{i=0}}\mathcal{D}_{\phi}(\widetilde{\bm{x}}^{i+1},\,\bm{x}^{i+1})
+ \big[2\lambda-(\lambda-L)\sigma^{-1}\big]{\textstyle\sum^{k}_{i=0}}\,i\mathcal{D}_{\phi}(\widetilde{\bm{x}}^{i+1},\,\bm{x}^{i+1}) \\
&\leq a\mathcal{D}_{\phi}(\bm{x},\,\bm{x}^{0})
+ b(k+1)e(\bm{x}) + c\,{\textstyle\sum^{k}_{i=0}}\,i\mathcal{D}_{\phi}(\widetilde{\bm{x}}^{i+1},\,\bm{x}^{i+1}),
\end{aligned}
\end{equation*}
where the last inequality follows from \eqref{sumablediff-IC23} with $a:=(1+b)\lambda$, $b:=(\lambda-L-\lambda\sigma)^{-1}\sigma\lambda$, and $c:=2\lambda-(\lambda-L)\sigma^{-1}$. Then, dividing the above inequality by $k+1$, we can get \eqref{compk-iBPGM-IC2}. We next consider the following three cases.
\begin{itemize}
\item If problem \eqref{mainpro} has an optimal solution $\bm{x}^*\in\mathrm{dom}\,\phi$, we have from \eqref{compk-iBPGM-IC2} with $\bm{x}^*$ in place of $\bm{x}$ that
    \begin{equation}\label{compkic2star}
    \begin{aligned}
    F(\widetilde{\bm{x}}^{k+1}) - F^*
    \leq {\textstyle\frac{1}{k+1}}\left(
    a\,\mathcal{D}_{\phi}(\bm{x}^*,\bm{x}^{0})
    + c\,{\textstyle\sum^{k}_{i=0}}i\mathcal{D}_{\phi}(\widetilde{\bm{x}}^{i+1},\bm{x}^{i+1})\right).
    \end{aligned}
    \end{equation}
    Since  $\sum^{\infty}_{k=0}\mathcal{D}_{\phi}(\widetilde{\bm{x}}^{k+1},\,\bm{x}^{k+1})<\infty$ by statement (i), it then follows from Lemma \ref{lemseqcon2} that $\frac{1}{k+1}\sum^{k}_{i=0}i\mathcal{D}_{\phi}(\widetilde{\bm{x}}^{i+1},\bm{x}^{i+1})\to0$.
    This fact together with \eqref{compkic2star} implies that $F(\widetilde{\bm{x}}^k) \to F^*$.

\item If $0<\sigma\leq\frac{\lambda-L}{2\lambda}$, we see that $c:=2\lambda-(\lambda-L)\sigma^{-1}\leq0$ and hence
    \begin{equation*}
    \begin{aligned}
    F(\widetilde{\bm{x}}^{k+1}) - F(\bm{x})
    \leq {\textstyle\frac{a}{k+1}}\mathcal{D}_{\phi}(\bm{x},\bm{x}^{0})
    + be(\bm{x}), \quad \forall\,\bm{x}\in\mathrm{dom}\,P\cap\mathrm{dom}\,\phi.
    \end{aligned}
    \end{equation*}
    Then, we have that
    \begin{equation*}
    \limsup\limits_{k\to\infty}\,
    F\big(\widetilde{\bm{x}}^{k}\big) \leq F(\bm{x}) + be(\bm{x}), \quad \forall\,\bm{x}\in\mathrm{dom}\,P\cap\mathrm{dom}\,\phi.
    \end{equation*}
    This together with $\widetilde{\bm{x}}^{k}\in\mathrm{dom}\,P\cap\mathrm{dom}\,\phi\subseteq
    \mathrm{dom}\,P\cap\mathrm{dom}\,f\cap Q$ and \eqref{infeq} implies that
    \begin{equation*}
    F^*\leq\liminf\limits_{k\to\infty}\,
    F\big(\widetilde{\bm{x}}^{k}\big)\leq\limsup\limits_{k\to\infty}\,
    F\big(\widetilde{\bm{x}}^{k}\big)
    \leq F^*,
    \end{equation*}
    from which, we can conclude that $F\big(\widetilde{\bm{x}}^{k}\big) \to F^*$.

\item If $\frac{1}{k}\sum^{k-1}_{i=0}i\mathcal{D}_{\phi}(\widetilde{\bm{x}}^{i+1},\bm{x}^{i+1})\to0$, one can see from \eqref{compk-iBPGM-IC2} that
    \begin{equation*}
    \limsup\limits_{k\to\infty}\,
    F\big(\widetilde{\bm{x}}^{k}\big) \leq F(\bm{x}) + be(\bm{x}), \quad \forall\,\bm{x}\in\mathrm{dom}\,P\cap\mathrm{dom}\,\phi.
    \end{equation*}
    Then, using the same arguments as above, we have that $F\big(\widetilde{\bm{x}}^{k}\big) \to F^*$.

\end{itemize}

In addition, when problem \eqref{mainpro} has an optimal solution $\bm{x}^*\in\mathrm{dom}\,\phi$, we have \eqref{compkic2star}. In this case, if $0<\sigma\leq\frac{\lambda-L}{2\lambda}$ or $\sum^{\infty}_{k=0}k\mathcal{D}_{\phi}(\widetilde{\bm{x}}^{k+1},\bm{x}^{k+1})<\infty$, one can see that the right-hand side of \eqref{compkic2star} is finite and hence $F(\widetilde{\bm{x}}^k) - F^* \leq \mathcal{O}\left(\frac{1}{k}\right)$. This completes the proof.
\end{proof}

Theorems \ref{thm-fval-iBPGM-IC1} and \ref{thm-fval-iBPGM-IC2} give the convergence rate of our iBPGM under (AbSC) and (ReSC), respectively. To establish the convergence result for the sequence of iterates, we need to make the following additional assumptions on the kernel function $\phi$.

\begin{assumption}\label{assumB}
Assume that the kernel function $\phi$ satisfies the following conditions.
\begin{itemize}[leftmargin=1cm]
\item[{\bf B1.}] $\mathrm{dom}\,\phi = \overline{\mathrm{dom}}\,\phi = \mathcal{Q}$, i.e., the domain of $\phi$ is closed.

\item[{\bf B2.}] For any $\bm{x}\in\mathrm{dom}\,\phi$ and $\alpha\in\mathbb{R}$, the level set $\big\{\bm{y}\in\mathrm{int}\,\mathrm{dom}\,\phi:
    \mathcal{D}_{\phi}(\bm{x},\,\bm{y})\leq\alpha\big\}$ is bounded.

\item[{\bf B3.}] If $\{\bm{x}^k\}\subseteq\mathrm{int}\,\mathrm{dom}\,\phi$ converges to some $\bm{x}^{*}\in\mathrm{dom}\,\phi$, then $\mathcal{D}_{\phi}(\bm{x}^{*},\,\bm{x}^k)\to0$.

\item[{\bf B4.}] (Convergence consistency) If $\{\bm{x}^k\}\subseteq\mathrm{dom}\,\phi$ and $\{\bm{y}^k\}\subseteq\mathrm{int}\,\mathrm{dom}\,\phi$ are two sequences such that $\{\bm{x}^k\}$ is bounded, $\bm{y}^k \to \bm{y}^{*}$ and $\mathcal{D}_{\phi}(\bm{x}^k,\,\bm{y}^k)\to0$, then $\bm{x}^k \to \bm{y}^{*}$.
\end{itemize}
\end{assumption}

All or part of the above assumptions are typically utilized in the study of the convergence of the Bregman proximal point algorithm or the Bregman proximal gradient method in the convex setting; see, for example, \cite{bbt2017descent,cl1981iterative,cz1992proximal,e1993nonlinear,e1998approximate,ss2000inexact,t2018simplified,yt2020bregman}. These assumptions can be satisfied by several well-known kernel functions, such as the quadratic kernel function $\phi(\bm{x}):=\frac{1}{2}\|\bm{x}\|^2$, the entropy kernel function $\phi(\bm{x}):=\sum_{i=1}^nx_{i}(\log x_{i}-1)$, and the Hellinger kernel function $\phi(\bm{x}):=-\sum_{i=1}^n\sqrt{1-x_i^2}$. Indeed, Bauschke and Borwein have demonstrated in \cite[Proposition 3.3 and Remark 3.4]{bb1997legendre} that Assumption \ref{assumB}3 can be satisfied by all separable essentially smooth functions. Moreover, Solodov and Svaiter have shown in \cite[Theorem 2.4]{ss2000inexact} that Assumption \ref{assumB}4 (convergence consistency) holds under the following conditions: (i) $\phi$ is strictly convex and continuous on $\mathrm{dom}\,\phi$; (ii) $\phi$ is continuously differentiable on $\mathrm{int}\,\mathrm{dom}\,\phi$; (iii) Assumptions \ref{assumB}1, \ref{assumB}2 and \ref{assumB}3 are satisfied. More in-depth discussions and explanations on functions with these properties can be found in \cite{bb1997legendre,bbc2003redundant} and references therein. With Assumption \ref{assumB}, we are now ready to present the convergence results for the iterates generated by Algorithm \ref{algo-iBPGM}.

\begin{theorem}\label{thm-conv-iBPGM}
Suppose that Assumptions \ref{assumA} and \ref{assumB} hold. Moreover, suppose that $\lambda \geq L$ and $\sum\varepsilon_k<\infty$ for (AbSC), and that $\lambda>L$ and $0<\sigma<\frac{\lambda-L}{\lambda}$ for (ReSC). Let $\{\bm{x}^k\}$ and $\{\widetilde{\bm{x}}^k\}$ be the sequences generated by the iBPGM in Algorithm \ref{algo-iBPGM}. If $\{\widetilde{\bm{x}}^k\}$ is bounded and the optimal solution set of problem \eqref{mainpro} is nonempty, then the following statements hold.
\begin{itemize}
\item[{\rm (i)}] Any cluster point of $\{\widetilde{\bm{x}}^k\}$ is an optimal solution of problem \eqref{mainpro}.

\item[{\rm (ii)}] The sequences $\{\bm{x}^{k}\}$ and $\{\widetilde{\bm{x}}^{k}\}$ converge to the same limit that is an optimal solution of problem \eqref{mainpro}.
\end{itemize}
\end{theorem}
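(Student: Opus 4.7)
My plan for \textbf{(i)} is to invoke the descent bound \eqref{suffdes-iBPG} with $\bm{x}$ set to an optimal solution $\bm{x}^\star\in\mathrm{dom}\,P\cap\mathrm{dom}\,\phi$ (whose existence is the implicit reading of the hypothesis, consistent with Theorem \ref{thm-fval-iBPG}(iv)--(v)). Dropping the nonnegative term $F(\widetilde{\bm{x}}^{k+1})-F^\star\ge 0$ yields
\[
L\mathcal{D}_\phi(\bm{x}^\star,\bm{x}^{k+1})\;\le\;L\mathcal{D}_\phi(\bm{x}^\star,\bm{x}^k)+\varepsilon_k,\qquad \varepsilon_k:=\eta_k\|\widetilde{\bm{x}}^{k+1}-\bm{x}^\star\|+L\mu_k+\nu_k,
\]
with $\sum_k\varepsilon_k<\infty$ by boundedness of $\{\widetilde{\bm{x}}^k\}$ together with the summability hypotheses. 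Lemma \ref{lemseqcon} then gives convergence (and hence boundedness) of $\{\mathcal{D}_\phi(\bm{x}^\star,\bm{x}^k)\}$. Telescoping \eqref{suffdes-iBPG} instead gives $\sum_k(F(\widetilde{\bm{x}}^{k+1})-F^\star)<\infty$, whence $F(\widetilde{\bm{x}}^k)\to F^\star$. For any cluster point $\bar{\bm{x}}$ of $\{\widetilde{\bm{x}}^k\}$, the lower semicontinuity of $F=P+f$ (both summands proper closed convex) together with $\bar{\bm{x}}\in\overline{\mathrm{dom}\,\phi}=\mathcal{Q}$ and \eqref{infeq} force $F^\star\le F(\bar{\bm{x}})\le\liminf_j F(\widetilde{\bm{x}}^{k_j})=F^\star$, so $\bar{\bm{x}}$ is optimal.

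For \textbf{(ii)}, Assumption \ref{assumB}1 makes $\mathrm{dom}\,\phi=\mathcal{Q}$ closed, so any cluster point of $\{\widetilde{\bm{x}}^k\}\subseteq\mathrm{dom}\,P\cap\mathrm{dom}\,\phi$ stays in this set and (i) identifies it as optimal. Boundedness of $\{\mathcal{D}_\phi(\bm{x}^\star,\bm{x}^k)\}$ combined with Assumption \ref{assumB}2 yields boundedness of $\{\bm{x}^k\}$. Fix a cluster point $\bar{\bm{x}}$ of $\{\widetilde{\bm{x}}^k\}$; rerunning the previous recursion with $\bar{\bm{x}}$ in place of $\bm{x}^\star$ shows $\{\mathcal{D}_\phi(\bar{\bm{x}},\bm{x}^k)\}$ converges to some $D^\star\ge 0$. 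Along a subsequence with $\widetilde{\bm{x}}^{k_j}\to\bar{\bm{x}}$, extract a further subsequence along which the bounded $\{\bm{x}^{k_j}\}$ converges to some $\bar{\bm{y}}$. Because $\mathcal{D}_\phi(\widetilde{\bm{x}}^{k_j},\bm{x}^{k_j})\le\mu_{k_j-1}\to 0$, Assumption \ref{assumB}4 applied with $\bm{x}^k:=\widetilde{\bm{x}}^{k_j}$ and $\bm{y}^k:=\bm{x}^{k_j}$ forces $\widetilde{\bm{x}}^{k_j}\to\bar{\bm{y}}$, so $\bar{\bm{y}}=\bar{\bm{x}}$ and $\bm{x}^{k_j}\to\bar{\bm{x}}$. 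Assumption \ref{assumB}3 then yields $\mathcal{D}_\phi(\bar{\bm{x}},\bm{x}^{k_j})\to 0$; since the full sequence $\{\mathcal{D}_\phi(\bar{\bm{x}},\bm{x}^k)\}$ already converges, we conclude $D^\star=0$, i.e., $\mathcal{D}_\phi(\bar{\bm{x}},\bm{x}^k)\to 0$.

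To finish, let $\bar{\bm{x}}'$ be any other cluster point of $\{\widetilde{\bm{x}}^k\}$, with $\widetilde{\bm{x}}^{k'_j}\to\bar{\bm{x}}'$ and (by the same extraction) $\bm{x}^{k'_j}\to\bar{\bm{x}}'$. Apply Assumption \ref{assumB}4 once more, now with the constant sequence $\bm{x}^k\equiv\bar{\bm{x}}$ and $\bm{y}^k:=\bm{x}^{k'_j}\to\bar{\bm{x}}'$, using $\mathcal{D}_\phi(\bar{\bm{x}},\bm{x}^{k'_j})\to 0$: B4 returns $\bar{\bm{x}}=\bar{\bm{x}}'$. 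Hence $\{\widetilde{\bm{x}}^k\}$ has a unique cluster point $\bar{\bm{x}}$ and so $\widetilde{\bm{x}}^k\to\bar{\bm{x}}$; the same argument applied to the cluster points of $\{\bm{x}^k\}$ gives $\bm{x}^k\to\bar{\bm{x}}$. The step I expect to be the main obstacle is precisely this final uniqueness argument: Assumption \ref{assumB}4 has to be invoked in a non-obvious way---first in its natural direction to transfer convergence from $\{\bm{x}^k\}$ to $\{\widetilde{\bm{x}}^k\}$ through the two-point slack $\mu_k$, and then with a constant anchor sequence to pin down the common limit of both iterate sequences.
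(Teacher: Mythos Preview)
Your argument for (ii) is essentially the paper's, with only cosmetic differences: the paper starts from a cluster point of $\{\bm{x}^k\}$ (whose boundedness comes from B2) rather than of $\{\widetilde{\bm{x}}^k\}$, but the transfer of limits via B3/B4 and the uniqueness step---applying B4 with a constant anchor sequence---are identical.

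For (i), however, there is a gap. Your telescoping argument plugs an optimal $\bm{x}^\star$ into \eqref{suffdes-iBPG}, and this requires $\bm{x}^\star\in\mathrm{dom}\,P\cap\mathrm{dom}\,\phi$. Under the theorem's hypotheses alone (Assumption~\ref{assumA}, \emph{not} Assumption~\ref{assumB}), an optimal solution is only guaranteed to lie in $\mathcal{Q}=\overline{\mathrm{dom}}\,\phi$, not in $\mathrm{dom}\,\phi$; in that case $\mathcal{D}_\phi(\bm{x}^\star,\cdot)$ is undefined and your telescope is unavailable. Your ``implicit reading'' is exactly what B1 supplies---and indeed the paper invokes it explicitly in part (ii)---but statement (i) is claimed without Assumption~\ref{assumB}. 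The paper avoids this by a different route: from $\sum\eta_k<\infty$, $\sum\mu_k<\infty$, $\sum\nu_k<\infty$ and Lemma~\ref{lemseqcon2} it obtains $\frac{1}{k}\sum_{i<k} i\eta_i\to 0$ (and similarly for $\mu_i,\nu_i$); boundedness of $\{\widetilde{\bm{x}}^k\}$ then verifies the hypotheses of Theorem~\ref{thm-fval-iBPG}(iii), which gives $F(\widetilde{\bm{x}}^k)\to F^\star$ directly, without ever needing an optimal point inside $\mathrm{dom}\,\phi$. Nonemptiness of the optimal set is used only at the very end to identify $F^\star$ as an attained minimum. Once $F(\widetilde{\bm{x}}^k)\to F^\star$ is in hand, your lower-semicontinuity cluster-point argument coincides with the paper's.
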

\begin{proof}
\textit{Statement (i)}.
For (AbSC), since $\sum\varepsilon_k<\infty$, it follows from Lemma \ref{lemseqcon2} that $\frac{1}{k}\sum^{k-1}_{i=0}i\varepsilon_i\to0$. This together with Theorem \ref{thm-fval-iBPGM-IC1}(iii) implies that $F(\widetilde{\bm{x}}^k) \to F^*$. On the other hand, for (ReSC), we also have from (iii) and (iv) in Theorem \ref{thm-fval-iBPGM-IC2} that $F(\widetilde{\bm{x}}^k) \to F^*$. Now, suppose that $\widetilde{\bm{x}}^{\infty}$ is a cluster point (which must exist since $\{\widetilde{\bm{x}}^k\}$ is bounded) and $\{\widetilde{\bm{x}}^{k_i}\}$ is a convergent subsequence such that $\lim\limits_{i\to\infty}\widetilde{\bm{x}}^{k_i} = \widetilde{\bm{x}}^{\infty}$. Then,
\begin{equation*}
\min\left\{F(\bm{x}) : \bm{x}\in\mathcal{Q} \right\}
= F^* = \lim\limits_{k\to\infty}\,F(\widetilde{\bm{x}}^{k}) = \lim\limits_{k_i\to\infty}\,F(\widetilde{\bm{x}}^{k_i}) \geq F(\widetilde{\bm{x}}^{\infty}),
\end{equation*}
where the last inequality follows from the lower semicontinuity of $F$ (since $P$ and $f$ are closed by Assumptions \ref{assumA}2\&3). This implies that $F(\widetilde{\bm{x}}^{\infty})$ is finite and hence  $\widetilde{\bm{x}}^{\infty}\in\mathrm{dom}\,F$. Moreover, since $\widetilde{\bm{x}}^k\in\mathrm{dom}\,P\cap\mathrm{dom}\,\phi\subseteq\mathrm{dom}\,\phi$, then $\widetilde{\bm{x}}^{\infty}\in\overline{\mathrm{dom}}\,\phi=\mathcal{Q}$. Therefore, $\widetilde{\bm{x}}^{\infty}$ is an optimal solution of problem \eqref{mainpro}. This proves statement (i).

\textit{Statement (ii)}.
Let $\bm{x}^*$ be an arbitrary optimal solution of problem \eqref{mainpro}. From Assumptions \ref{assumA}3 and \ref{assumB}1, we see that $\bm{x}^*\in\mathrm{dom}\,P\cap\mathrm{dom}\,f\cap\mathcal{Q} = \mathrm{dom}\,P\cap\mathrm{dom}\,\phi$. Then, we set $\bm{x}=\bm{x}^*$ in \eqref{suffdes-iBPGM} and obtain after rearranging the resulting inequality that
\begin{equation}\label{suffdes-iBPGM-xstar}
\begin{aligned}
\mathcal{D}_{\phi}(\bm{x}^*,\,\bm{x}^{k+1})
&\leq \mathcal{D}_{\phi}(\bm{x}^*,\,\bm{x}^{k})
- \lambda^{-1}(F(\widetilde{\bm{x}}^{k+1})-F(\bm{x}^*))
- \lambda^{-1}(\lambda-L)\mathcal{D}_{\phi}(\widetilde{\bm{x}}^{k+1},\,\bm{x}^{k}) \\
&\qquad + \big(\mathcal{D}_{\phi}(\widetilde{\bm{x}}^{k+1},\,\bm{x}^{k+1}) + \lambda^{-1}\delta_k\big)
+ \lambda^{-1}|\langle \Delta^k, \,\widetilde{\bm{x}}^{k+1} - \bm{x}^* \rangle|.
\end{aligned}
\end{equation}
We next show that $\{\mathcal{D}_{\phi}(\bm{x}^*,\,\bm{x}^{k})\}$ is convergent.
\begin{itemize}
\item For (AbSC), we have from \eqref{suffdes-iBPGM-xstar}, $\lambda \geq L$ and $F(\widetilde{\bm{x}}^{k+1}) \geq F(\bm{x}^*)$ for all $k\geq0$ that $\mathcal{D}_{\phi}(\bm{x}^*,\,\bm{x}^{k+1})
    \leq \mathcal{D}_{\phi}(\bm{x}^*,\,\bm{x}^{k})
    + \big(\lambda^{-1}\|\widetilde{\bm{x}}^{k+1}-\bm{x}^*\|+1\big)\varepsilon_{k}$. Then, using this fact, the nonnegativity of $\mathcal{D}_{\phi}(\bm{x}^*,\,\bm{x}^{k})$, $\sum\varepsilon_k<\infty$, the boundedness of $\{\widetilde{\bm{x}}^k\}$, and Lemma \ref{lemseqcon}, we obtain that $\{\mathcal{D}_{\phi}(\bm{x}^*,\,\bm{x}^{k})\}$ is convergent.

\item For (ReSC), we have from \eqref{suffdes-iBPGM-xstar} that
    \begin{equation*}
    \begin{aligned}
    \mathcal{D}_{\phi}(\bm{x}^*,\,\bm{x}^{k+1})
    &\leq \mathcal{D}_{\phi}(\bm{x}^*,\,\bm{x}^{k})
    - \lambda^{-1}(F(\widetilde{\bm{x}}^{k+1})-F(\bm{x}^*))
    - \lambda^{-1}(\lambda-L-\lambda\sigma)\mathcal{D}_{\phi}(\widetilde{\bm{x}}^{k+1},\,\bm{x}^{k}) \\
    &\leq \mathcal{D}_{\phi}(\bm{x}^*,\,\bm{x}^{k}),
    \end{aligned}
    \end{equation*}
    where the last inequality follows from $\lambda>L$, $0<\sigma<\frac{\lambda-L}{\lambda}$, and $F(\widetilde{\bm{x}}^{k+1}) \geq F(\bm{x}^*)$ for all $k\geq0$. Thus, $\{\mathcal{D}_{\phi}(\bm{x}^*,\,\bm{x}^{k})\}$ is nonincreasing. This together with the nonnegativity of $\mathcal{D}_{\phi}(\bm{x}^*,\,\bm{x}^{k})$ implies that $\{\mathcal{D}_{\phi}(\bm{x}^*,\,\bm{x}^{k})\}$ is convergent.

\end{itemize}

Since $\{\mathcal{D}_{\phi}(\bm{x}^*,\,\bm{x}^{k})\}$ is convergent, it then follows from Assumption \ref{assumB}2
that the sequence $\{\bm{x}^{k}\}$ is bounded and hence has at least one cluster point. Suppose that $\bm{x}^{\infty}$ is a cluster point and $\{\bm{x}^{k_j}\}$ is a convergent subsequence such that $\lim_{j\to\infty} \bm{x}^{k_j} = \bm{x}^{\infty}$.
Then, from $\lim_{j\to\infty}\mathcal{D}_{\phi}(\widetilde{\bm{x}}^{k_j}, \,\bm{x}^{k_j}) = 0$ (by Theorem \ref{thm-fval-iBPGM-IC2}(i)),
the boundedness of $\{\widetilde{\bm{x}}^{k_j}\}$ and
Assumption \ref{assumB}4, we have that $\lim_{j\to\infty}\widetilde{\bm{x}}^{k_j} = \bm{x}^{\infty}$. This together with statement (i) implies that $\bm{x}^{\infty}$ is an optimal solution of problem \eqref{mainpro}. Moreover, by using \eqref{suffdes-iBPGM-xstar} with $\bm{x}^*$ replaced by $\bm{x}^{\infty}$, we can conclude that $\{\mathcal{D}_{\phi}(\bm{x}^{\infty},\,\bm{x}^k)\}$ is convergent. On the other hand, it follows from $\lim_{j\to\infty} \bm{x}^{k_j} = \bm{x}^{\infty}$ and Assumption \ref{assumB}3 that $\mathcal{D}_{\phi}(\bm{x}^{\infty}, \,\bm{x}^{k_j})\to0$. Consequently, we must have that $\mathcal{D}_{\phi}(\bm{x}^{\infty},\,\bm{x}^k)\to0$. Now, let $\bm{z}$ be arbitrary cluster point of $\{\bm{x}^k\}$ with a convergent subsequence $\{\bm{x}^{k'_j}\}$ such that $\bm{x}^{k'_j}\to\bm{z}$. Since $\mathcal{D}_{\phi}(\bm{x}^{\infty},\,\bm{x}^k)\to0$, then we have $\mathcal{D}_{\phi}(\bm{x}^{\infty},\,\bm{x}^{k'_j})\to0$. From this and Assumption \ref{assumB}4, we see that $\bm{x}^{\infty}=\bm{z}$. Since $\bm{z}$ is arbitrary, we can conclude that $\lim_{k\to\infty}\bm{x}^k=\bm{x}^{\infty}$. Finally, using this together with the boundedness of $\{\widetilde{\bm{x}}^k\}$, $\mathcal{D}_{\phi}(\widetilde{\bm{x}}^{k}, \,\bm{x}^{k})\to0$ and Assumption \ref{assumB}4, we deduce that $\{\widetilde{\bm{x}}^k\}$ also converges to $\bm{x}^{\infty}$. This completes the proof.
\end{proof}

\section{An inertial variant of the iBPGM}\label{sec-v-iBPGM}

In this section, we shall develop an inertial variant of our iBPGM based on two types of inexact stopping criteria to obtain a possibly faster convergence rate. Before proceeding, we introduce an additional restricted relative smoothness assumption, which modifies the one proposed by Gutman and Pe{\~n}a in
\cite[Section 3.3]{gp2018perturbed}. This assumption is crucial for developing the convergence rate of our inertial variant and can subsume the related conditions used in \cite{at2006interior,hrx2018accelerated,t2008on,t2010approximation} for developing inertial methods.

\begin{assumption}\label{assumC}
In addition to Assumption \ref{assumA}3 with a closed convex set $\mathcal{X}\supseteq\mathrm{dom}\,P\cap\mathrm{dom}\,\phi$, $f$ also satisfies the following smoothness condition relative to $\phi$ restricted on $\mathcal{X}$: there exist two constants $\tau>0$ and $\gamma\geq1$ such that, for any $\bm{x},\,\widetilde{\bm{z}}\in\mathrm{dom}\,P\cap\mathrm{dom}\,\phi$ and $\bm{z}\in\mathrm{int}\,\mathrm{dom}\,\phi\cap\mathcal{X}$,
\begin{equation}\label{ineq-smooth}
\mathcal{D}_{f}\big((1-\theta)\bm{x}+\theta\widetilde{\bm{z}}, \,(1-\theta)\bm{x}+\theta\bm{z}\big) \leq \tau L\,\theta^{\gamma}\,\mathcal{D}_{\phi}(\widetilde{\bm{z}},\,\bm{z}), \quad \forall\,\theta\in(0,\,1].
\end{equation}
Here, $\gamma$ is called the restricted relative smoothness exponent of $f$ relative to $\phi$ restricted on $\mathcal{X}$.
\end{assumption}

We provide below two representative examples of $(f,\,\phi)$ satisfying Assumption \ref{assumC}.

\begin{example}\label{example1}
If $\nabla f$ is $L_f$-Lipschitz continuous on $\mathcal{X}$ and $\phi$ is $\mu_{\phi}$-strongly convex on $\mathcal{X}$, one can verify that
\begin{equation*}
\mathcal{D}_{f}\big((1-\theta)\bm{x}+\theta\widetilde{\bm{z}}, \,(1-\theta)\bm{x}+\theta\bm{z}\big) \leq \frac{L_f\theta^2}{2}\|\widetilde{\bm{z}}-\bm{z}\|^2 \leq \mu_{\phi}^{-1} L_f\,\theta^2\,\mathcal{D}_{\phi}(\widetilde{\bm{z}},\,\bm{z}).
\end{equation*}
Thus, \eqref{ineq-smooth} holds with $\tau=\mu_{\phi}^{-1}$, $L=L_f$ and $\gamma=2$. Here, we would like to point out that, without the restriction on the set $\mathcal{X}$, the above inequality may not hold for the entropy kernel function $\phi(\bm{x})=\sum_{i}x_{i}(\log x_{i}-1)$ even when $\nabla f$ is Lipschitz continuous on $\mathbb{R}^n$, since the entropy kernel is not strongly convex on its whole domain $\mathbb{R}^n_+$. In contrast, when taking into consideration of the restriction on the set $\mathcal{X}$ and when $\mathcal{X}$ is a bounded and closed subset of $\mathbb{R}^n_+$, one can have the above inequality with the exponent $\gamma=2$ for the entropy kernel function. As we shall see later from Theorem \ref{thm-iABPGM-IC1}(iii), this would lead to a faster convergence rate of $\mathcal{O}(1/k^2)$. Therefore, introducing the set $\mathcal{X}$ is important to broaden the choices of $(f,\,\phi)$.
\end{example}

\begin{example}
If $f$ is $L$-smooth relative to $\phi$ restricted on $\mathcal{X}$ and the Bregman distance $\mathcal{D}_{\phi}$ has the so-called triangle scaling property (TSP) \cite[Definition 2]{hrx2018accelerated}) with a triangle scaling exponent $\gamma\geq1$, one can verify that
\begin{equation*}
\mathcal{D}_{f}\big((1-\theta)\bm{x}+\theta\widetilde{\bm{z}}, \,(1-\theta)\bm{x}+\theta\bm{z}\big)
\leq L\mathcal{D}_{\phi}\big((1-\theta)\bm{x}+\theta\widetilde{\bm{z}}, \,(1-\theta)\bm{x}+\theta\bm{z}\big)
\leq L\theta^{\gamma}\mathcal{D}_{\phi}(\widetilde{\bm{z}},\,\bm{z}).
\end{equation*}
Thus, \eqref{ineq-smooth} holds with $\tau=1$ and some $\gamma\geq1$ determined by $\mathcal{D}_{\phi}$. For example, when considering the entropy function $\phi(\bm{x})=\sum_{i}x_{i}(\log x_{i}-1)$ as a kernel function, $\mathcal{D}_{\phi}(\cdot, \,\cdot)$ is jointly convex\footnote{More examples on the joint convexity of the Bregman distance can be found in \cite{bb2001joint}.} and hence $\mathcal{D}_{\phi}$ has the TSP with $\gamma=1$.
\end{example}

We also need to specify the conditions on the choice of the parameters sequence $\{\theta_k\}_{k=-1}^{\infty}$, which will be used for developing the inertial method in the sequel. Specifically, the sequence of the parameters $\{\theta_k\}_{k=-1}^{\infty}\subseteq(0,1]$ with $\theta_{-1}=\theta_0=1$ is chosen such that, for all $k\geq1$,
\begin{numcases}{}
\theta_k\leq\frac{\alpha-1}{k+\alpha-1}, ~~\alpha\geq\gamma+1, \label{condtheta1} \\
\vartheta_k := \frac{1}{\theta_{k-1}^{\gamma}} -  \frac{1-\theta_{k}}{\theta_{k}^{\gamma}} \geq 0, \label{condtheta}
\end{numcases}
where $\gamma\geq1$ is the restricted relative smoothness exponent specified in Assumption \ref{assumC}. These conditions are inspired by several works on  accelerated methods (see, for example, \cite{cd2015convergence,hrx2018accelerated,t2008on}), and they actually provide a unified and broader framework to choose $\{\theta_k\}$. Using similar arguments as in \cite[Lemma 3]{hrx2018accelerated}, one can show that, for any $\alpha\geq\gamma+1$, $\theta_k=\frac{\alpha-1}{k+\alpha-1}$, $\forall k\geq1$, satisfy conditions \eqref{condtheta1} and \eqref{condtheta}. One can also obtain a sequence $\{\theta_k\}$ by iteratively solving the equality form of \eqref{condtheta} via an appropriate root-finding procedure. It should be noted that these conditions allow $\{\theta_k\}$ to decrease but not too fast. Moreover, we have the following lemma whose proof can be found in Appendix \ref{apd-lemmas}.

\begin{lemma}\label{lem-thetasumbd}
For any sequence $\{\theta_k\}_{k=-1}^{\infty}\subseteq(0,1]$ with $\theta_{-1}=\theta_0=1$ satisfying conditions \eqref{condtheta1} and \eqref{condtheta}, we have that $\theta_k\geq\frac{1}{k+\gamma}$ for all $k\geq0$ and
\begin{equation*}
\sum^k_{i=0} \vartheta_i \leq 2 + \frac{(k+1+\gamma)^{\gamma}}{\gamma}.
\end{equation*}
\end{lemma}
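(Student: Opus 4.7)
The plan is to establish the two assertions separately: the pointwise lower bound $\theta_k \geq 1/(k+\gamma)$ by induction on $k$, and then the summation estimate by combining a telescoping identity with an integral comparison.

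\textbf{Step 1 (pointwise bound).} I would introduce the auxiliary function $g(t) := (1-t)/t^{\gamma}$ on $(0,1]$ and observe that $g$ is strictly decreasing there: a direct differentiation gives
\[
g'(t) = \frac{(\gamma-1)t - \gamma}{t^{\gamma+1}} \leq \frac{-1}{t^{\gamma+1}} < 0 \quad \text{for all } t \in (0,1],\ \gamma \geq 1.
\]
Condition \eqref{condtheta} rewrites as $g(\theta_k) \leq 1/\theta_{k-1}^{\gamma}$. The base case $k=0$ is immediate since $\theta_0 = 1 \geq 1/\gamma$. For the inductive step, assume $\theta_{k-1} \geq 1/(k-1+\gamma)$, which gives $1/\theta_{k-1}^{\gamma} \leq (k-1+\gamma)^{\gamma}$. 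A direct computation then yields
\[
g\!\left(\tfrac{1}{k+\gamma}\right) = (k+\gamma-1)(k+\gamma)^{\gamma-1} \geq (k+\gamma-1)^{\gamma},
\]
where the inequality uses $(k+\gamma)^{\gamma-1} \geq (k+\gamma-1)^{\gamma-1}$, valid for $\gamma \geq 1$. Chaining these gives $g(1/(k+\gamma)) \geq g(\theta_k)$, and the monotonicity of $g$ delivers $\theta_k \geq 1/(k+\gamma)$, completing the induction.

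\textbf{Step 2 (sum bound).} I would use the algebraic identity $(1-\theta_i)/\theta_i^{\gamma} = 1/\theta_i^{\gamma} - 1/\theta_i^{\gamma-1}$ to rewrite
\[
\vartheta_i = \left( \frac{1}{\theta_{i-1}^{\gamma}} - \frac{1}{\theta_i^{\gamma}} \right) + \frac{1}{\theta_i^{\gamma-1}}.
\]
Summing from $i=0$ to $k$, the first piece telescopes to $1/\theta_{-1}^{\gamma} - 1/\theta_k^{\gamma} = 1 - 1/\theta_k^{\gamma} \leq 0$ since $\theta_k \leq 1$. By Step 1, $1/\theta_i^{\gamma-1} \leq (i+\gamma)^{\gamma-1}$, and since $t \mapsto (t+\gamma)^{\gamma-1}$ is nondecreasing on $[0,\infty)$ for $\gamma \geq 1$, an integral comparison gives
\[
\sum_{i=0}^{k} (i+\gamma)^{\gamma-1} \leq \int_0^{k+1}(t+\gamma)^{\gamma-1}\,dt = \frac{(k+1+\gamma)^{\gamma} - \gamma^{\gamma}}{\gamma} \leq \frac{(k+1+\gamma)^{\gamma}}{\gamma}.
\]
Putting the two pieces together yields $\sum_{i=0}^k \vartheta_i \leq (k+1+\gamma)^{\gamma}/\gamma$, which is in fact sharper than the claimed $2 + (k+1+\gamma)^{\gamma}/\gamma$; the additive constant $2$ in the statement provides comfortable slack that absorbs the (negative) telescoped term and the $-\gamma^{\gamma}/\gamma$ left over from the integral.

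\textbf{Anticipated difficulty.} The only delicate point is the boundary case $\gamma = 1$, where several of the factors $(k+\gamma)^{\gamma-1}$ collapse to $1$ and some inequalities above become equalities; one should check separately that both the monotonicity of $g$ and the integral comparison remain valid (both do, essentially trivially). It is also worth noting that condition \eqref{condtheta1} is not directly invoked by my argument — only the nonnegativity $\vartheta_k \geq 0$ from \eqref{condtheta} together with the initialization $\theta_{-1} = \theta_0 = 1$ is used — so the lemma ultimately reduces to the interplay between the recursive inequality on $\{\theta_k\}$ and the monotonicity of the one-variable function $g$.
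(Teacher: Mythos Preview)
Your proposal is correct and follows the same overall architecture as the paper's proof: induction for the pointwise lower bound, then the telescoping identity combined with an integral comparison for the sum. The two executions diverge slightly in the inductive step: the paper uses the elementary inequality $(1-\theta)^{\gamma}\leq 1-\theta$ (valid for $\theta\in(0,1]$, $\gamma\geq1$) to reduce $\frac{1-\theta_{k+1}}{\theta_{k+1}^{\gamma}}\leq\frac{1}{\theta_k^{\gamma}}$ to $\frac{1-\theta_{k+1}}{\theta_{k+1}}\leq\frac{1}{\theta_k}$ and then solves for $\theta_{k+1}$, whereas you invert the monotone map $g(t)=(1-t)/t^{\gamma}$ after the computation $g(1/(k+\gamma))\geq(k-1+\gamma)^{\gamma}$. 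Both devices are equally elementary. In Step~2 the paper separates out the $i=0$ term (which equals~$1$ exactly) before invoking $\theta_i\geq 1/(i+\gamma)$, which is where the additive constant~$2$ originates; you instead apply the lower bound at $i=0$ as well and drop the negative telescoped piece, yielding the sharper constant-free estimate $\sum_{i=0}^k\vartheta_i\leq(k+1+\gamma)^{\gamma}/\gamma$. Your observation that condition~\eqref{condtheta1} is not used in the proof is also accurate.
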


We are now ready to present an inertial variant of our iBPGM (denoted by v-iBPGM for short) with two types of inexact stopping criteria for solving problem \eqref{mainpro}. The complete framework is presented as Algorithm \ref{algo-iABPGM}.

\begin{algorithm}[ht]
\caption{An inertial variant of the iBPGM (v-iBPGM) for solving problem \eqref{mainpro}}\label{algo-iABPGM}
\textbf{Input:} Let $\{\varepsilon_k\}_{k=0}^{\infty}$ be a sequence of nonnegative scalars, $\mathcal{X}\supseteq\mathrm{dom}\,P\cap\mathrm{dom}\,\phi$ be the closed convex set in Assumption \ref{assumA}3, and $\gamma\geq1$ be the restricted relative smoothness exponent specified in Assumption \ref{assumC}. Choose $\bm{x}^{0}\in\mathrm{dom}\,P\cap\mathrm{dom}\,\phi$ and $\bm{z}^0\in\mathcal{X}\cap\mathrm{int}\,\mathrm{dom}\,\phi$. Set $\theta_0=1$ and $k=0$.  \\
\textbf{while} a termination criterion is not met, \textbf{do} \vspace{-2mm}
\begin{itemize}[leftmargin=2cm]
\item[\textbf{Step 1}.] Compute $\bm{y}^k = (1-\theta_k)\bm{x}^k + \theta_k\bm{z}^k$.

\item[\textbf{Step 2}.] Find a pair $(\bm{z}^{k+1}, \,\widetilde{\bm{z}}^{k+1})$ and an error pair $(\Delta^k, \delta_k)$ by approximately solving
    \begin{equation}\label{subpro-iABPGM}
    \min\limits_{\bm{z}}~P(\bm{z}) + \langle \nabla f(\bm{y}^k), \,\bm{z}-\bm{y}^k\rangle
    + \lambda\,\theta_k^{\gamma-1}\mathcal{D}_{\phi}(\bm{z},\,\bm{z}^k),
    \end{equation}
    such that $\bm{z}^{k+1} \in \mathcal{X}\cap \mathrm{int}\,\mathrm{dom}\,\phi$, $\widetilde{\bm{z}}^{k+1} \in \mathrm{dom}\,P\cap\mathrm{dom}\,\phi$ and
    \begin{equation}\label{inexcond-iABPGM}
    \Delta^k \in \partial_{\delta_k} P(\widetilde{\bm{z}}^{k+1}) + \nabla f(\bm{y}^k) + \lambda\,\theta_k^{\gamma-1}(\nabla \phi(\bm{z}^{k+1})-\nabla \phi(\bm{z}^{k})),
    \end{equation}
    satisfying \textit{either} an absolute-type stopping criterion (AbSC') \textit{or} a partial relative-type stopping criterion (ReSC') as follows:
    \begin{equation*}
    \begin{aligned}
    &\textbf{(AbSC')} \qquad \|\Delta^k\| + \lambda^{-1}\theta_k^{1-\gamma}\delta_k
    + \mathcal{D}_{\phi}(\widetilde{\bm{z}}^{k+1}, \,\bm{z}^{k+1})
    \leq \varepsilon_{k}, \\
    &\textbf{(ReSC')} \qquad \|\Delta^k\|=0 ~~\mbox{and}~~\lambda^{-1}\theta_k^{1-\gamma}\delta_k
    + \mathcal{D}_{\phi}(\widetilde{\bm{z}}^{k+1}, \,\bm{z}^{k+1})
    \leq \sigma\mathcal{D}_{\phi}(\widetilde{\bm{z}}^{k+1}, \,\bm{z}^{k}).
    \end{aligned}
    \end{equation*}

\item[\textbf{Step 3}.] Compute $\bm{x}^{k+1} = (1-\theta_k)\bm{x}^k + \theta_k\widetilde{\bm{z}}^{k+1}$.

\item[\textbf{Step 4}.] Choose $\theta_{k+1}\in(0,1]$ satisfying conditions \eqref{condtheta1} and \eqref{condtheta}.

\item[\textbf{Step 5}.] Set $k = k+1$ and go to \textbf{Step 1}. \vspace{-1.5mm}
\end{itemize}
\textbf{end while}  \\
\textbf{Output}: $\bm{x}^{k}$ \vspace{0.5mm}
\end{algorithm}

Our v-iBPGM in Algorithm \ref{algo-iABPGM} is inspired by Hanzely, Richt\'{a}rik and Xiao's accelerated Bregman proximal gradient method \cite{hrx2018accelerated}, which extends Auslender and Teboulle's improved interior gradient algorithm
\cite[Section 5]{at2006interior} and Tseng's extension \cite{t2008on,t2010approximation} to the relatively smooth setting. However, note that none of the methods in \cite{at2006interior,hrx2018accelerated,t2008on,t2010approximation} allow the subproblem to be solved \textit{approximately}. Moreover, the analysis in \cite{hrx2018accelerated} is established based on the relative smoothness condition and a crucial triangle scaling property (TSP) for the Bregman distance. Since TSP may not imply the strong convexity of the kernel function, the convergence results in \cite{hrx2018accelerated} actually cannot recover the related results in \cite{at2006interior,t2008on,t2010approximation} when $\nabla f$ is $L$-Lipschitz continuous. In contrast, our analysis shall use the restricted relative smoothness condition \eqref{relsmoothcond} together with a more
general condition \eqref{ineq-smooth} in Assumption \ref{assumC} that could be  satisfied by the conditions used in \cite{at2006interior,hrx2018accelerated,t2008on,t2010approximation}. Thus, our subsequent convergence results can readily subsume the related results in \cite{at2006interior,hrx2018accelerated,t2008on,t2010approximation} when the subproblem is solved exactly.

\begin{lemma}
Suppose that Assumptions \ref{assumA} and \ref{assumC} hold, $\lambda \geq \tau L$, and $0\leq\sigma\leq\frac{\lambda-\tau L}{\lambda}$. Let $\{\bm{x}^k\}$, $\{\bm{z}^k\}$ and $\{\widetilde{\bm{z}}^k\}$ be the sequences generated by the v-iBPGM in Algorithm \ref{algo-iABPGM}. Then, for any $k\geq0$ and any $\bm{x}\in\mathrm{dom}\,P\cap\mathrm{dom}\,\phi$, the following statements hold.
\begin{itemize}
\item For (AbSC'), we have that
      \begin{equation}\label{suffdes-iABPGM-IC1}
      \begin{aligned}
      &\quad \theta_{k+1}^{-\gamma}(1-\theta_{k+1})\big(F(\bm{x}^{k+1})-F(\bm{x})\big)
      + \lambda\mathcal{D}_{\phi}(\bm{x},\,\bm{z}^{k+1}) \\
      &\leq \theta_k^{-\gamma}(1-\theta_k)\big(F(\bm{x}^{k})-F(\bm{x})\big)
      + \lambda\mathcal{D}_{\phi}(\bm{x},\,\bm{z}^{k})
      + \vartheta_{k+1}e(\bm{x}) \\
      &\qquad
      + \big(\lambda+\|\bm{x}\|\theta_k^{1-\gamma}\big)\varepsilon_k
      + \theta_k^{1-\gamma}|\langle\Delta^k, \,\widetilde{\bm{z}}^{k+1}\rangle|.
      \end{aligned}
      \end{equation}

\item For (ReSC'), we have that
      \begin{equation}\label{suffdes-iABPGM-IC2}
      \begin{aligned}
      &\quad \theta_{k+1}^{-\gamma}(1-\theta_{k+1})\big(F(\bm{x}^{k+1})-F(\bm{x})\big)
      + \lambda\mathcal{D}_{\phi}(\bm{x},\,\bm{z}^{k+1}) \\
      &\leq \theta_k^{-\gamma}(1-\theta_k)\big(F(\bm{x}^{k})-F(\bm{x})\big)
      + \lambda\mathcal{D}_{\phi}(\bm{x},\,\bm{z}^{k})
      +\vartheta_{k+1} e(\bm{x}).
      \end{aligned}
      \end{equation}

\end{itemize}
Here, $\vartheta_{k+1}:=\theta_{k}^{-\gamma}-\theta_{k+1}^{-\gamma}(1-\theta_{k+1})$ and $e(\bm{x}):=F(\bm{x})-F^*$ with $F^*:=\inf\left\{F(\bm{x}) : \bm{x}\in\mathcal{Q} \right\}>-\infty$ (see Assumption \ref{assumA}4).
\end{lemma}
\begin{proof}
First, using the similar arguments for deducing \eqref{ineq1-gen}, we have that, for any $k\geq0$ and any $\bm{x}\in\mathrm{dom}\,P\cap\mathrm{dom}\,\phi$,
\begin{equation*}
\begin{aligned}
P(\widetilde{\bm{z}}^{k+1})
&\leq P(\bm{x}) - \langle \nabla f(\bm{y}^k), \,\widetilde{\bm{z}}^{k+1}
-\bm{x} \rangle + h_k(\bm{x}),
\end{aligned}
\end{equation*}
where
\begin{equation*}
\begin{aligned}
h_k(\bm{x}) := &\; \lambda\theta_k^{\gamma-1}\mathcal{D}_{\phi}(\bm{x},\,\bm{z}^{k})
- \lambda\theta_k^{\gamma-1}\mathcal{D}_{\phi}(\bm{x},\,\bm{z}^{k+1})
- \lambda\theta_k^{\gamma-1}\mathcal{D}_{\phi}(\widetilde{\bm{z}}^{k+1},\,\bm{z}^{k})
\\
&~~ + \lambda\theta_k^{\gamma-1}\big(\mathcal{D}_{\phi}(\widetilde{\bm{z}}^{k+1},\,\bm{z}^{k+1}) + \lambda^{-1}\theta_k^{1-\gamma}\delta_k\big)
+ |\langle\Delta^k, \,\widetilde{\bm{z}}^{k+1}-\bm{x}\rangle|.
\end{aligned}
\end{equation*}
This implies that
\begin{equation}\label{ineq2-acciBPGM}
\begin{aligned}
&\quad P(\widetilde{\bm{z}}^{k+1}) + f(\bm{y}^k) + \langle \nabla f(\bm{y}^k), \,\widetilde{\bm{z}}^{k+1}-\bm{y}^{k}\rangle \\
&\leq P(\bm{x}) + f(\bm{y}^k)
+ \langle \nabla f(\bm{y}^k), \,\bm{x}-\bm{y}^{k}\rangle
+ h_k(\bm{x})
\;\leq\; P(\bm{x}) + f(\bm{x})
+ h_k(\bm{x}),
\end{aligned}
\end{equation}
where the last inequality follows from the convexity of $f$. Next, we see that
\begin{equation*}
\begin{aligned}
&\quad F(\bm{x}^{k+1})
= P(\bm{x}^{k+1}) + f(\bm{x}^{k+1})
= P\big((1-\theta_k)\bm{x}^k + \theta_k\widetilde{\bm{z}}^{k+1}\big) + f(\bm{x}^{k+1}) \\
&\leq (1-\theta_k)P(\bm{x}^k) + \theta_kP(\widetilde{\bm{z}}^{k+1})
+ f(\bm{y}^k) + \langle \nabla f(\bm{y}^k), \,\bm{x}^{k+1}-\bm{y}^{k}\rangle + \mathcal{D}_{f}(\bm{x}^{k+1},\,\bm{y}^{k}) \\
&= (1-\theta_k)\!\left[ P(\bm{x}^k) + f(\bm{y}^k) + \langle \nabla f(\bm{y}^k), \,\bm{x}^{k}-\bm{y}^{k}\rangle \right]
+ \theta_k\!\left[ P(\widetilde{\bm{z}}^{k+1}) + f(\bm{y}^k) + \langle \nabla f(\bm{y}^k), \,\widetilde{\bm{z}}^{k+1}-\bm{y}^{k}\rangle \right] \\
&\qquad + \mathcal{D}_{f}\big((1-\theta_k)\bm{x}^k + \theta_k\widetilde{\bm{z}}^{k+1},\,(1-\theta_k)\bm{x}^k + \theta_k\bm{z}^k\big) \\
&\leq (1-\theta_k)F(\bm{x}^k)
+ \theta_kF(\bm{x})
+ \lambda\theta_k^{\gamma}\mathcal{D}_{\phi}(\bm{x},\bm{z}^{k})
- \lambda\theta_k^{\gamma}\mathcal{D}_{\phi}(\bm{x},\bm{z}^{k+1}) \\
&\qquad - (\lambda-\tau L)
\theta_k^{\gamma}\mathcal{D}_{\phi}(\widetilde{\bm{z}}^{k+1},\bm{z}^{k})
+ \lambda\theta_k^{\gamma}\big(\mathcal{D}_{\phi}(\widetilde{\bm{z}}^{k+1},\,\bm{z}^{k+1})
+ \lambda^{-1}\theta_k^{1-\gamma}\delta_k\big)
+ \theta_k|\langle\Delta^k, \,\widetilde{\bm{z}}^{k+1}-\bm{x}\rangle|,
\end{aligned}
\end{equation*}
where the first inequality follows from the convexity of $P$ and the definition of $\mathcal{D}_f$, and the last inequality follows from \eqref{ineq-smooth} and \eqref{ineq2-acciBPGM}. Now, in the above inequality, subtracting $F(\bm{x})$ from both sides, dividing both sides by $\theta_k^{\gamma}$ and rearranging the resulting relation, we have that for any $k\geq0$,
\begin{equation}\label{suffdes-iABPGM-tmp}
\begin{aligned}
&\quad \theta_k^{-\gamma}\big(F(\bm{x}^{k+1})-F(\bm{x})\big)
+ \lambda\mathcal{D}_{\phi}(\bm{x},\bm{z}^{k+1})  \\
&\leq \theta_k^{-\gamma}(1-\theta_k)\big(F(\bm{x}^{k})-F(\bm{x})\big)
+ \lambda\mathcal{D}_{\phi}(\bm{x},\bm{z}^{k})
- (\lambda-\tau L)\mathcal{D}_{\phi}(\widetilde{\bm{z}}^{k+1},\bm{z}^{k}) \\
&\qquad
+ \lambda\big(\mathcal{D}_{\phi}(\widetilde{\bm{z}}^{k+1},\,\bm{z}^{k+1}) + \lambda^{-1}\theta_k^{1-\gamma}\delta_k\big)
+ \theta_k^{1-\gamma}|\langle\Delta^k, \,\widetilde{\bm{z}}^{k+1}-\bm{x}\rangle|.
\end{aligned}
\end{equation}
Moreover,
\begin{equation*}
\begin{aligned}
\theta_{k}^{-\gamma}\big(F(\bm{x}^{k+1})-F(\bm{x})\big)
&= \theta_{k+1}^{-\gamma}(1-\theta_{k+1})\big(F(\bm{x}^{k+1})-F(\bm{x})\big)
+ \vartheta_{k+1}\big(F(\bm{x}^{k+1})-F(\bm{x})\big) \\
&\geq
\theta_{k+1}^{-\gamma}(1-\theta_{k+1})\big(F(\bm{x}^{k+1})-F(\bm{x})\big)
+ \vartheta_{k+1}\big(F^*-F(\bm{x})\big),
\end{aligned}
\end{equation*}
where the last inequality follows from $\vartheta_{k+1}\geq0$ (by condition \eqref{condtheta}) and $F(\bm{x}^{k+1}) \geq F^*$ for all $k\geq0$. Then, combining the above relations yields
\begin{equation}\label{suffdes-iABPGM}
\begin{aligned}
&\quad \theta_{k+1}^{-\gamma}(1-\theta_{k+1})\big(F(\bm{x}^{k+1})-F(\bm{x})\big)
+ \lambda\mathcal{D}_{\phi}(\bm{x},\,\bm{z}^{k+1}) \\
&\leq \theta_k^{-\gamma}(1-\theta_k)\big(F(\bm{x}^{k})-F(\bm{x})\big)
+ \lambda\mathcal{D}_{\phi}(\bm{x},\,\bm{z}^{k})
+ \vartheta_{k+1} (F(\bm{x})-F^*)   \\[2pt]
&\quad
- (\lambda-\tau L)\mathcal{D}_{\phi}(\widetilde{\bm{z}}^{k+1},\bm{z}^{k})
+ \lambda\big(\mathcal{D}_{\phi}(\widetilde{\bm{z}}^{k+1},\,\bm{z}^{k+1})
+ \lambda^{-1}\theta_k^{1-\gamma}\delta_k\big)
+ \theta_k^{1-\gamma}|\langle\Delta^k, \,\widetilde{\bm{z}}^{k+1}-\bm{x}\rangle|.
\end{aligned}
\end{equation}
Thus, one can have the following results.
\begin{itemize}
\item When using (AbSC'), we have that
      \begin{equation}\label{addineq-iABPGM-IC1}
      \hspace{-0.7cm}
      \begin{aligned}
      &-(\lambda-\tau L)\mathcal{D}_{\phi}(\widetilde{\bm{z}}^{k+1},\bm{z}^{k})
      + \lambda\big(\mathcal{D}_{\phi}(\widetilde{\bm{z}}^{k+1},\,\bm{z}^{k+1}) + \lambda^{-1}\theta_k^{1-\gamma}\delta_k\big)
      + \theta_k^{1-\gamma}|\langle\Delta^k, \,\widetilde{\bm{z}}^{k+1}-\bm{x}\rangle|  \\
      &\leq \lambda\varepsilon_k + \theta_k^{1-\gamma}|\langle\Delta^k, \,\bm{x}\rangle| + \theta_k^{1-\gamma}|\langle\Delta^k, \,\widetilde{\bm{z}}^{k+1}\rangle|
      \leq \big(\lambda+\|\bm{x}\|\theta_k^{1-\gamma}\big)\varepsilon_k
      + \theta_k^{1-\gamma}|\langle\Delta^k, \,\widetilde{\bm{z}}^{k+1}\rangle|,
      \end{aligned}
      \end{equation}
      which, together with \eqref{suffdes-iABPGM}, gives \eqref{suffdes-iABPGM-IC1}.

\item When using (ReSC') with $0\leq\sigma\leq\frac{\lambda-\tau L}{\lambda}$, we have that
    \begin{equation}\label{addineq-iABPGM-IC2}
    \hspace{-0.7cm}
    \begin{aligned}
    &-(\lambda-\tau L)\mathcal{D}_{\phi}(\widetilde{\bm{z}}^{k+1},\bm{z}^{k})
    + \lambda\big(\mathcal{D}_{\phi}(\widetilde{\bm{z}}^{k+1},\,\bm{z}^{k+1}) + \lambda^{-1}\theta_k^{1-\gamma}\delta_k\big)
    + \theta_k^{1-\gamma}|\langle\Delta^k, \,\widetilde{\bm{z}}^{k+1}-\bm{x}\rangle|  \\
    &\leq - (\lambda-\tau L-\lambda\sigma)\mathcal{D}_{\phi}(\widetilde{\bm{z}}^{k+1},\bm{z}^{k})
    \leq 0,
    \end{aligned}
    \end{equation}
    which, together with \eqref{suffdes-iABPGM}, gives \eqref{suffdes-iABPGM-IC2}.

\end{itemize}
We then complete the proof.
\end{proof}

\begin{theorem}[\textbf{Convergence rate of the v-iBPGM with (AbSC')}]\label{thm-iABPGM-IC1}
Suppose that Assumptions \ref{assumA} and \ref{assumC} hold, and $\lambda \geq \tau L$. Let $\{\bm{x}^k\}$, $\{\bm{z}^k\}$ and $\{\widetilde{\bm{z}}^k\}$ be the sequences generated by the v-iBPGM with (AbSC') in Algorithm \ref{algo-iABPGM}. Then, the following statements hold.
\begin{itemize}
\item[{\rm (i)}] For any $k\geq0$ and any $\bm{x}\in\mathrm{dom}\,P\cap\mathrm{dom}\,\phi$,
    \begin{equation*}
    \begin{aligned}
    &\quad F(\bm{x}^{k+1}) - F(\bm{x})  \\
    &\leq \left(\frac{\alpha-1}{k+\alpha-1}\right)^{\gamma}
    \left(\lambda\mathcal{D}_{\phi}(\bm{x},\,\bm{z}^{0})
    + e(\bm{x}){\textstyle\sum^{k}_{i=0}}\vartheta_i
    +  {\textstyle\sum^{k}_{i=0}}\big(\lambda+\|\bm{x}\|\theta_i^{1-\gamma}\big)\varepsilon_i
    + {\textstyle\sum^{k}_{i=0}}\,\theta_i^{1-\gamma}|\langle\Delta^i, \,\widetilde{\bm{z}}^{i+1}\rangle|\right),
    \end{aligned}
    \end{equation*}
    where $e(\bm{x}):=F(\bm{x})-F^*$ with $F^*:=\inf\left\{F(\bm{x}) : \bm{x}\in\mathcal{Q} \right\}>-\infty$, $\vartheta_{k+1}:=\theta_{k}^{-\gamma}-\theta_{k+1}^{-\gamma}(1-\theta_{k+1})$, and $\alpha$ is as in \eqref{condtheta1}.

\item[{\rm (ii)}] If $k^{-\gamma}\sum^{k-1}_{i=0}\theta_i^{1-\gamma}\varepsilon_i\to0$ and $k^{-\gamma}\sum^{k-1}_{i=0}\,\theta_i^{1-\gamma}|\langle\Delta^i, \,\widetilde{\bm{z}}^{i+1}\rangle|\to0$, then $F(\bm{x}^k) \to F^*$.

\item[{\rm (iii)}] If $\sum\theta_k^{1-\gamma}\varepsilon_k<\infty$, $\sum\theta_k^{1-\gamma}|\langle\Delta^k, \,\widetilde{\bm{z}}^{k+1}\rangle|<\infty$ and problem \eqref{mainpro} has an optimal solution $\bm{x}^*$ such that $\bm{x}^*\in\mathrm{dom}\,\phi$, then
    \begin{equation*}
    F(\bm{x}^k) - F^* \leq \mathcal{O}\left(\frac{1}{k^{\gamma}}\right).
    \end{equation*}
\end{itemize}
\end{theorem}
\begin{proof}
\textit{Statement (i)}.
First, we see from \eqref{suffdes-iABPGM-IC1} that, for any $i\geq0$ and any $\bm{x}\in\mathrm{dom}\,P\cap\mathrm{dom}\,\phi$,
\begin{equation*}
\begin{aligned}
&\quad~ \theta_{i+1}^{-\gamma}(1-\theta_{i+1})\big(F(\bm{x}^{i+1})-F(\bm{x})\big)
+ \lambda\mathcal{D}_{\phi}(\bm{x},\,\bm{z}^{i+1}) \\
&\leq \theta_i^{-\gamma}(1-\theta_i)\big(F(\bm{x}^{i})-F(\bm{x})\big)
+ \lambda\mathcal{D}_{\phi}(\bm{x},\,\bm{z}^{i})
+ e(\bm{x})\vartheta_{i+1}   \\
& \qquad
+ \big(\lambda+\|\bm{x}\|\theta_i^{1-\gamma}\big)\varepsilon_i
+ \theta_i^{1-\gamma}|\langle\Delta^i, \,\widetilde{\bm{z}}^{i+1}\rangle|.
\end{aligned}
\end{equation*}
For any $k\geq1$, summing this inequality from $i=0$ to $i=k-1$ and recalling $\theta_0=1$ results in
\begin{equation*}
\begin{aligned}
&\quad \theta_{k}^{-\gamma}(1-\theta_{k})\big(F(\bm{x}^{k})-F(\bm{x})\big)
+ \lambda\mathcal{D}_{\phi}(\bm{x},\,\bm{z}^{k})   \\
&\leq \lambda\mathcal{D}_{\phi}(\bm{x},\,\bm{z}^{0})
+ e(\bm{x}){\textstyle\sum^{k-1}_{i=0}}\vartheta_{i+1}
+ {\textstyle\sum^{k-1}_{i=0}}\big(\lambda+\|\bm{x}\|\theta_i^{1-\gamma}\big)\varepsilon_i
+ {\textstyle\sum^{k-1}_{i=0}}\theta_i^{1-\gamma}|\langle\Delta^i, \,\widetilde{\bm{z}}^{i+1}\rangle|,
\end{aligned}
\end{equation*}
which, together with \eqref{suffdes-iABPGM-tmp}, \eqref{addineq-iABPGM-IC1} and $\sum^{k-1}_{i=0}\vartheta_{i+1}\leq\sum^{k}_{i=0}\vartheta_{i}$, implies that, for any $k\geq1$,
\begin{equation*}
\begin{aligned}
&\quad \theta_{k}^{-\gamma}\big(F(\bm{x}^{k+1})-F(\bm{x})\big)
+ \lambda\mathcal{D}_{\phi}(\bm{x},\,\bm{z}^{k+1})  \\
&\leq \lambda\mathcal{D}_{\phi}(\bm{x},\,\bm{z}^{0})
+ e(\bm{x}){\textstyle\sum^{k}_{i=0}}\,\vartheta_i
+  {\textstyle\sum^{k}_{i=0}}\big(\lambda+\|\bm{x}\|\theta_i^{1-\gamma}\big)\varepsilon_i
+ {\textstyle\sum^{k}_{i=0}}\,\theta_i^{1-\gamma}|\langle\Delta^i, \,\widetilde{\bm{z}}^{i+1}\rangle|.
\end{aligned}
\end{equation*}
Moreover, it is easy to verify that this inequality also holds for $k=0$ (using \eqref{suffdes-iABPGM-tmp} for $k=0$). Thus, multiplying the above inequality by $\theta_{k}^{\gamma}$ and using condition \eqref{condtheta1}, we can obtain the desired result in statement (i).

\textit{Statement (ii)}.
Note from Lemma \ref{lem-thetasumbd} that $\sum^{k}_{i=0}\vartheta_i \leq 2+\frac{(k+1+\gamma)^{\gamma}}{\gamma}$. Thus, one can verify that, for all $k\geq0$,
\begin{equation*}
\begin{aligned}
&\left(\frac{\alpha-1}{k+\alpha-1}\right)^{\gamma}\sum^{k}_{i=0}\vartheta_i
\leq 2\left(\frac{\alpha-1}{k+\alpha-1}\right)^{\gamma} + \left(\frac{\alpha-1}{k+\alpha-1}\right)^{\gamma}\frac{(k+1+\gamma)^{\gamma}}{\gamma} \\
&\leq 2 + \frac{(\alpha-1)^{\gamma}}{\gamma}\left(\frac{k+1+\gamma}{k+\alpha-1}\right)^{\gamma}
\leq 2 + \frac{\max\big\{(\alpha-1)^{\gamma},\,(1+\gamma)^{\gamma}\big\}}{\gamma},
\end{aligned}
\end{equation*}
where the second inequality follows from $\frac{\alpha-1}{k+\alpha-1}\leq1$ and the last inequality follows from $\frac{k+1+\gamma}{k+\alpha-1}\leq\max\big\{1,\, \frac{1+\gamma}{\alpha-1}\big\}$. Using this fact, $k^{-\gamma}\sum^{k-1}_{i=0}\theta_i^{1-\gamma}\varepsilon_i\to0$ (which implies  $k^{-\gamma}\sum^{k-1}_{i=0}\varepsilon_i\to0$ since $\theta_i^{1-\gamma}\geq1$),  and $k^{-\gamma}\sum^{k-1}_{i=0}\,\theta_i^{1-\gamma}|\langle\Delta^i, \,\widetilde{\bm{z}}^{i+1}\rangle|\to0$, one can see from statement (i) that
\begin{equation*}
\limsup\limits_{k\to\infty}\,F(\bm{x}^{k}) \leq F(\bm{x}) + \left(2+\frac{\max\big\{(\alpha-1)^{\gamma},\,(1+\gamma)^{\gamma}\big\}}{\gamma}\right)e(\bm{x}), \quad \forall\,\bm{x}\in\mathrm{dom}\,P\cap\mathrm{dom}\,\phi.
\end{equation*}
This together with \eqref{infeq} implies that
\begin{equation*}
F^*
\leq\liminf\limits_{k\to\infty}\,F(\bm{x}^{k})
\leq\limsup\limits_{k\to\infty}\,F(\bm{x}^{k})
\leq F^*,
\end{equation*}
from which, we can conclude that $F(\bm{x}^{k}) \to F^*$ and prove statement (ii).

\textit{Statement (iii)}.
When problem \eqref{mainpro} has an optimal solution $\bm{x}^*$ such that $\bm{x}^*\in\mathrm{dom}\,\phi$, we have $F^*=F(\bm{x}^*)=\min\left\{F(\bm{x}) : \bm{x}\in\mathcal{Q} \right\}$. Then, applying statement (i) with $\bm{x}=\bm{x}^*$, we get
\begin{equation*}
\begin{aligned}
&\quad F(\bm{x}^{k+1}) - F^*  \\
&\leq \left(\frac{\alpha-1}{k+\alpha-1}\right)^{\gamma}
\left(\lambda\mathcal{D}_{\phi}(\bm{x}^*,\bm{z}^{0})
+  {\textstyle\sum^{k}_{i=0}}\big(\lambda+\|\bm{x}^*\|\theta_i^{1-\gamma}\big)\varepsilon_i
+ {\textstyle\sum^{k}_{i=0}}\,\theta_i^{1-\gamma}|\langle\Delta^i, \,\widetilde{\bm{z}}^{i+1}\rangle|\right).
\end{aligned}
\end{equation*}
Since $\sum\theta_k^{1-\gamma}\varepsilon_k<\infty$ and $\sum\theta_k^{1-\gamma}|\langle\Delta^k, \,\widetilde{\bm{z}}^{k+1}\rangle|<\infty$, it is easy to see that $F(\bm{x}^k)-F^*\leq\mathcal{O}(k^{-\gamma})$. This completes the proof.
\end{proof}

\begin{remark}[\textbf{Comments on convergence rate of the v-iBPGM with (AbSC')}]\label{rek-comp-viBPGM}
Similar to the discussions in Remark \ref{rek-comp-iBPGM}, the condition that $\sum\theta_k^{1-\gamma}|\langle\Delta^k, \,\widetilde{\bm{z}}^{k+1}\rangle|<\infty$
can be readily guaranteed if no error occurs on the left-hand-side of the optimality condition (namely, $\Delta^k=0$) when applying a certain subsolver for solving the subproblem, as is the case in our experiments. It can also reduce to $\sum\theta_k^{1-\gamma}\varepsilon_k<\infty$ if, for example, $\mathrm{dom}\,P\cap\mathrm{dom}\,\phi$ is bounded, which is the case for the test problem in our numerical experiments. Additionally, one could introduce an arbitrarily summable nonnegative sequence $\{\eta_k\}$, and at each iteration, verify that $\theta_k^{1-\gamma}|\langle\Delta^k, \,\widetilde{\bm{z}}^{k+1}\rangle|\leq\eta_k$ along with (AbSC') to meet the summability requirement.
\end{remark}

\begin{theorem}[\textbf{Convergence rate of the v-iBPGM with (ReSC')}]\label{thm-iABPGM-IC2}
Suppose that Assumptions \ref{assumA} and \ref{assumC} hold, $\lambda \geq \tau L$, and $0\leq\sigma\leq\frac{\lambda-\tau L}{\lambda}$. Let $\{\bm{x}^k\}$, $\{\bm{z}^k\}$ and $\{\widetilde{\bm{z}}^k\}$ be the sequences generated by the v-iBPGM with (ReSC') in Algorithm \ref{algo-iABPGM}. Then, the following statements hold.
\begin{itemize}
\item[{\rm (i)}] For any $k\geq0$ and any $\bm{x}\in\mathrm{dom}\,P\cap\mathrm{dom}\,\phi$,
    \begin{equation*}
    F(\bm{x}^{k+1}) - F(\bm{x})
    \leq \left(\frac{\alpha-1}{k+\alpha-1}\right)^{\gamma}
    \left(\lambda\mathcal{D}_{\phi}(\bm{x},\,\bm{z}^{0})
    + e(\bm{x}){\textstyle\sum^{k}_{i=0}}\vartheta_i\right),
    \end{equation*}
    where $e(\bm{x}):=F(\bm{x})-F^*$ with $F^*:=\inf\left\{F(\bm{x}) : \bm{x}\in\mathcal{Q} \right\}>-\infty$, $\vartheta_{k+1}:=\theta_{k}^{-\gamma}-\theta_{k+1}^{-\gamma}(1-\theta_{k+1})$, and $\alpha$ is as in \eqref{condtheta1}.

\item[{\rm (ii)}] It holds that $F(\bm{x}^k) \to F^*$.

\item[{\rm (iii)}] If problem \eqref{mainpro} has an optimal solution $\bm{x}^*$ such that $\bm{x}^*\in\mathrm{dom}\,\phi$, then
    \begin{equation*}
    F(\bm{x}^k) - F^* \leq \mathcal{O}\left(\frac{1}{k^{\gamma}}\right).
    \end{equation*}
\end{itemize}
\end{theorem}
\begin{proof}
\textit{Statement (i)}.
First, we see from \eqref{suffdes-iABPGM-IC2} that, for any $i\geq0$ and any $\bm{x}\in\mathrm{dom}\,P\cap\mathrm{dom}\,\phi$,
\begin{equation*}
\begin{aligned}
&\quad~ \theta_{i+1}^{-\gamma}(1-\theta_{i+1})\big(F(\bm{x}^{i+1})-F(\bm{x})\big)
+ \lambda\mathcal{D}_{\phi}(\bm{x},\,\bm{z}^{i+1}) \\
&\leq \theta_i^{-\gamma}(1-\theta_i)\big(F(\bm{x}^{i})-F(\bm{x})\big)
+ \lambda\mathcal{D}_{\phi}(\bm{x},\,\bm{z}^{i})
+ e(\bm{x})\vartheta_{i+1}.
\end{aligned}
\end{equation*}
For any $k\geq1$, summing this inequality from $i=0$ to $i=k-1$ and recalling $\theta_0=1$ results in
\begin{equation*}
\theta_{k}^{-\gamma}(1-\theta_{k})\big(F(\bm{x}^{k})-F(\bm{x})\big)
+ \lambda\mathcal{D}_{\phi}(\bm{x},\,\bm{z}^{k})
\leq \lambda\mathcal{D}_{\phi}(\bm{x},\,\bm{z}^{0})
+ e(\bm{x}){\textstyle\sum^{k-1}_{i=0}}\vartheta_{i+1},
\end{equation*}
which, together with \eqref{suffdes-iABPGM-tmp}, \eqref{addineq-iABPGM-IC2} and $\sum^{k-1}_{i=0}\vartheta_{i+1}\leq\sum^{k}_{i=0}\vartheta_{i}$, implies that, for any $k\geq1$,
\begin{equation*}
\theta_{k}^{-\gamma}\big(F(\bm{x}^{k+1})-F(\bm{x})\big)
+ \lambda\mathcal{D}_{\phi}(\bm{x},\,\bm{z}^{k+1})
\leq \lambda\mathcal{D}_{\phi}(\bm{x},\,\bm{z}^{0})
+ e(\bm{x}){\textstyle\sum^{k}_{i=0}}\vartheta_i.
\end{equation*}
Moreover, it is easy to verify that this inequality also holds for $k=0$ (using \eqref{suffdes-iABPGM-tmp} for $k=0$). Thus, multiplying the above inequality by $\theta_{k}^{\gamma}$ and using condition \eqref{condtheta1}, we can obtain the desired result in statement (i).

With statement (i), the proofs of statements (ii) and (iii) are similar to those in Theorem \ref{thm-iABPGM-IC1} and thus are omitted to save space.
\end{proof}

One can see from Theorems \ref{thm-iABPGM-IC1} and \ref{thm-iABPGM-IC2} that our v-iBPGM with (AbSC') or (ReSC') achieves a flexible convergence rate of $\mathcal{O}(1/k^{\gamma})$ with $\gamma\geq1$ being a restricted relative smoothness exponent. Thus, when $\gamma>1$, the v-iBPGM indeed improves the $\mathcal{O}(1/k)$ convergence rate of the iBPGM at both the averaged iterate and the last iterate. In particular, when $\gamma=2$, $\tau=1$, and $\mathrm{dom}\,P\cap\mathrm{dom}\,\phi$ is bounded, along with the fact $\theta_k=\mathcal{O}(1/k)$ (see condition \eqref{condtheta1} and Lemma \ref{lem-thetasumbd}), the summable-error conditions required by the v-iBPGM with (AbSC') in Theorem \ref{thm-iABPGM-IC1} readily boil down to $\sum k\varepsilon_k<\infty$, which is the same as the one required by the iBPGM with (AbSC) in Theorem \ref{thm-fval-iBPGM-IC1}. Moreover, in this case, the tolerance condition $0\leq\sigma\leq\frac{\lambda-L}{\lambda}$ required by the v-iBPGM with (ReSC') in Theorem \ref{thm-iABPGM-IC2} is also looser than the condition $0<\sigma\leq\frac{\lambda-L}{2\lambda}$ by the iBPGM with (ReSC) in Theorem \ref{thm-fval-iBPGM-IC2}. Thus, it is interesting to see that the v-iBPGM appears to achieve a faster convergence rate at the last iterate under a same or even weaker level of error controls as the iBPGM. But we should be mindful that, since $\theta_k$ is contained as part of the proximal parameter in the subproblem \eqref{subpro-iABPGM} of the v-iBPGM, and it goes to zero eventually, we may have a more difficult subproblem to solve when $\theta_k$ becomes very small. This is somewhat the price we have to pay to obtain a theoretically faster convergence rate, and hence the true practical performance is indeed problem-dependent, as shown in our numerical section.

Before closing this section, we give the subsequential convergence result for the v-iBPGM. Note that establishing the global sequential convergence for the inertial variant could be more challenging, even when the subproblem is solved \textit{exactly}. Inspired by recent works \cite{ahgp2021block,mops2020convex} on inertial variants of the BPGM, it might be possible to establish similar global convergence results for our v-iBPGM under the widely-used Kurdyka-{\L}ojasiewicz (KL) property (see, e.g, \cite{ab2009on,abrs2010proximal,abs2013convergence,bdl2007the,bst2014proximal} for more details) together with additional assumptions on the kernel function $\phi$. We will leave this topic for future research.

\begin{theorem}[\textbf{Subsequential convergence of the v-iBPGM}]\label{thm-conv-v-iBPGM}
Suppose that Assumptions \ref{assumA} and \ref{assumC} hold. Moreover, for (AbSC'), suppose that $\lambda \geq \tau L$, $k^{-\gamma}\sum^{k-1}_{i=0}\theta_i^{1-\gamma}\varepsilon_i\to0$ and $k^{-\gamma}\sum^{k-1}_{i=0}\,\theta_i^{1-\gamma}|\langle\Delta^i, \,\widetilde{\bm{z}}^{i+1}\rangle|\to0$; for (ReSC'), suppose that $\lambda \geq \tau L$ and $0\leq\sigma\leq\frac{\lambda-\tau L}{\lambda}$. Let $\{\bm{x}^k\}$ and $\{\widetilde{\bm{z}}^k\}$ be the sequences generated by the v-iBPGM in Algorithm \ref{algo-iABPGM}. If the optimal solution set of problem \eqref{mainpro} is nonempty and $\{\bm{x}^k\}$ is bounded, then any cluster point of $\{\bm{x}^k\}$ is an optimal solution of problem \eqref{mainpro}.
\end{theorem}
\begin{proof}
If the optimal solution set of problem \eqref{mainpro} is nonempty and $\{\bm{x}^k\}$ is bounded, we have that $F^*=\min\left\{F(\bm{x}) : \bm{x}\in\mathcal{Q} \right\}$ and $\{\bm{x}^k\}$ has at least one cluster point. Suppose that $\bm{x}^{\infty}$ is a cluster point and $\{\bm{x}^{k_i}\}$ is a convergent subsequence such that $\lim\limits_{i\to\infty}\bm{x}^{k_i} = \bm{x}^{\infty}$. Then, from Theorem \ref{thm-iABPGM-IC1}(ii) or Theorem \ref{thm-iABPGM-IC2}(ii) together with the lower semicontinuity of $F$ (since $P$ and $f$ are closed by Assumptions \ref{assumA}2\&3), we see that
\begin{equation*}
\min\left\{F(\bm{x}) : \bm{x}\in\mathcal{Q} \right\} = F^* = \lim\limits_{k\to\infty}\,F(\bm{x}^{k}) \geq F(\bm{x}^{\infty}).
\end{equation*}
This implies that $F(\bm{x}^{\infty})$ is finite and hence  $\bm{x}^{\infty}\in\mathrm{dom}\,F$. Moreover, since $\bm{x}^k\in\mathrm{dom}\,P\cap\mathrm{dom}\,\phi\subseteq\mathrm{dom}\,\phi$, then $\bm{x}^{\infty}\in\overline{\mathrm{dom}}\,\phi=\mathcal{Q}$. Therefore, $\bm{x}^{\infty}$ is an optimal solution of problem \eqref{mainpro}. This completes the proof.
\end{proof}

\begin{remark}[\textbf{Comments on the boundedness of iterates}]\label{rek-bdness-v-iBPGM}
In Theorem \ref{thm-conv-v-iBPGM}, we assume the boundedness of $\{\bm{x}^k\}$ to ensure the existence of a cluster point. Here, we comment on sufficient conditions that can guarantee this boundedness assumption. Indeed, this can be easily satisfied when $\mathrm{dom}\,P\cap\mathrm{dom}\,\phi$ is bounded, which is the case for the test problems in our numerical experiments. Moreover, from Theorems \ref{thm-iABPGM-IC1}(iii) and \ref{thm-iABPGM-IC2}(iii), we see that the sequence of the objective value $\{F(\bm{x}^k)\}$ achieves a convergence rate of $\mathcal{O}\left(1/k^{\gamma}\right)$ under appropriate conditions. Thus, when $F$ is additionally level-bounded, the boundedness of $\{\bm{x}^k\}$ can be readily guaranteed.
\end{remark}

\section{Numerical experiments}\label{secnum}

In this section, we conduct some numerical experiments to test our iBPGM and v-iBPGM with the absolute-type and relative-type stopping criteria for solving the discrete quadratic regularized optimal transport (QROT) problem. It is worth noting that the goal of our experiments here is not to develop a state-of-the-art method for the QROT problem, but rather to preliminarily demonstrate the influence of different inexact stopping criteria on the convergence behaviors of different methods, and thus providing some insight beyond the theoretical results and validating the necessity of developing practical inexact versions. All experiments in this section are run in {\sc Matlab} R2024a on a PC with Intel Xeon processor E-2176G@3.70GHz (with 6 cores and 12 threads) and 64GB of RAM, equipped with a Windows OS.

The QROT problem, as an important variant of the classic optimal transport problem, has attracted particular attention in recent years; see, e.g., \cite{blondel2018smooth,essid2018quadratically,lorenz2021quadratically} for more details. Mathematically, the discrete QROT problem is given as follows:
\begin{equation}\label{QROTproblem}
\begin{aligned}
&\min\limits_{X}~\langle C, \,X\rangle + \frac{\nu}{2}\|X\|_F^2  \\
&~\mathrm{s.t.} \quad X\in\Omega := \big\{X\in\mathbb{R}^{m \times n}: X\bm{e}_n = \bm{a}, ~X^{\top}\bm{e}_m = \bm{b}, ~X\geq0\big\},
\end{aligned}
\end{equation}
where $\nu>0$ is a given regularization parameter, $C\in\mathbb{R}^{m \times n}_+$ is a given cost matrix, $\bm{a}:=(a_1,\cdots,a_m)^{\top}\in\Sigma_m$ and  $\bm{b}:=(b_1,\cdots,b_n)^{\top}\in\Sigma_n$ are given probability vectors with $\Sigma_{m}$ (resp., $\Sigma_{n}$) denoting the $m$ (resp., $n$)-dimensional unit simplex, and $\bm{e}_{m}$ (resp., $\bm{e}_{n}$) denotes the $m$ (resp., $n$)-dimensional vector of all ones. It is obvious that problem \eqref{QROTproblem} falls into the form of \eqref{mainpro} via some simple reformulations, and thus our iBPGM and v-iBPGM are applicable.

One can also show that the dual problem of \eqref{QROTproblem} is given by
\begin{equation}\label{dualQROT}
\max\limits_{\bm{f},\,\bm{g}}~
- \frac{1}{2\nu}\left\|\left(\bm{f}\bm{e}_{n}^{\top}
+ \bm{e}_{m}\bm{g}^{\top}-C\right)_+\right\|_F^2
+ \bm{a}^{\top}\bm{f}
+ \bm{b}^{\top}\bm{g},
\end{equation}
and the Karush-Kuhn-Tucker (KKT) system for \eqref{QROTproblem} and \eqref{dualQROT} is given by
\begin{equation}\label{kkt-QROT}
\begin{array}{l}
X\bm{e}_{n} = \bm{a}, ~~
X^{\top}\bm{e}_{m} = \bm{b}, ~~
\langle X, \,\mathcal{Z}(X,\bm{f},\bm{g})\rangle = 0, ~~X\geq0,
~~\mathcal{Z}(X,\bm{f},\bm{g})\geq0,
\end{array}
\end{equation}
where $\bm{f}\in\mathbb{R}^m$ and $\bm{g}\in\mathbb{R}^n$ are the Lagrangian multipliers (\textit{or} dual variables), $\mathcal{Z}(X,\bm{f},\bm{g}) := C + \nu X - \bm{f}\bm{e}_{n}^{\top} - \bm{e}_{m}\bm{g}^{\top}$, and $(\cdot)_+$ denotes the projection operator onto $\mathbb{R}^{m\times n}_+$. Moreover, $(X,\bm{f},\bm{g})$ satisfies the KKT system \eqref{kkt-QROT} if and only if $X$ solves \eqref{QROTproblem} and $(\bm{f}, \bm{g})$ solves \eqref{dualQROT}, respectively. Based on \eqref{kkt-QROT}, we define the relative KKT residual for any $(X,\,\bm{f},\,\bm{g})$ as follows:
\begin{equation*}
\Delta_{\rm kkt}(X,\bm{f},\bm{g})
:= \max\left\{\Delta_p(X), \,\Delta_d(X,\bm{f},\bm{g}), \,\Delta_c(X,\bm{f},\bm{g})\right\},
\end{equation*}
where $\Delta_p(X):=\max\left\{\frac{\|X\bm{e}_{n}-\bm{a}\|}{1+\|\bm{a}\|}, \,\frac{\|X^{\top}\bm{e}_{m}-\bm{b}\|}{1+\|\bm{b}\|},
\,\frac{\|\min\{X, \,0\}\|_F}{1+\|X\|_F}\right\}$, $\Delta_d(X,\bm{f},\bm{g}):= \frac{\|\min\{\mathcal{Z}(X,\bm{f},\bm{g}), \,0\}\|_F}{1+\|C\|_F}$, and $\Delta_c(X,\bm{f},\bm{g}):=\frac{\left|\langle X, \,\mathcal{Z}(X,\bm{f},\bm{g})\rangle\right|}{1+\|C\|_F}$. Moreover, we define the relative duality gap as follows:
\begin{equation*}
\Delta_{\rm gap}(X, \bm{f}, \bm{g}) := \cfrac{\left|\mathtt{pobj}(X) - \mathtt{dobj}(\bm{f},\bm{g})\right|}{1 + \left|\mathtt{pobj}(X)\right| + \left|\mathtt{dobj}(\bm{f},\bm{g})\right|},
\end{equation*}
where $\mathtt{pobj}(X) := \langle C, \,X\rangle
+ \frac{\nu}{2}\|X\|_F^2$ and $\mathtt{dobj}(\bm{u},\bm{v})
:= -\frac{1}{2\nu}\big\|\big(\bm{f}\bm{e}_{n}^{\top}
+ \bm{e}_{m}\bm{g}^{\top}-C\big)_+\big\|_F^2
+ \bm{a}^{\top}\bm{f}
+ \bm{b}^{\top}\bm{g}$. These relative residuals will be used later in our termination criterion.

\subsection{Implementation details}

To apply the iBPGM and v-iBPGM, we equivalently reformulate problem \eqref{QROTproblem} as
\begin{equation}\label{QROTproblem-ent}
\min\limits_{X}~\delta_{\Omega^{\circ}}(X) + \langle C, \,X\rangle + \frac{\nu}{2}\|X\|_F^2 \quad \mathrm{s.t.} \quad X\geq0,
\end{equation}
where $\Omega^{\circ}:=\big\{X\in\mathbb{R}^{m \times n}: X\bm{e}_n=\bm{a}, \,X^{\top}\bm{e}_m=\bm{b}\big\}$ is an affine space. Clearly, this problem takes the form of \eqref{mainpro} with $P(X):=\delta_{\Omega^{\circ}}(X)$, $f(X):=\langle C, \,X\rangle + \frac{\nu}{2}\|X\|_F^2$ and $\mathcal{Q}:=\mathbb{R}^{m \times n}_{+}$. Then, we consider the entropy kernel function $\phi(X)=\sum_{ij}x_{ij}(\log x_{ij}-1)$ whose domain is $\mathbb{R}^{m \times n}_{+}$ and choose $\mathcal{X}=[0,1]^{m\times n}$ which contains $\mathrm{dom}\,P\cap\mathrm{dom}\,\phi$. One can verify that $\phi$ is 1-strongly convex on $[0,1]^{m\times n}$, because $\phi$ is entrywise separable and the function $t\mapsto t(\log t-1)$ is 1-strongly convex on $[0,1]$. This, together with the Lipschitz continuity of $\nabla f$, implies that $f$ is $L$-smooth relative to $\phi$ restricted on $\mathcal{X}$ with $L=\nu$. Using these facts, one can further see from Example \ref{example1} in Section \ref{sec-v-iBPGM} that $(f, \,\phi)$ satisfies Assumption \ref{assumC} with $\tau=1$ and $\gamma=2$. Thus, we can readily apply our iBPGM and v-iBPGM with the entropy kernel function to solve problem \eqref{QROTproblem-ent} (and hence problem \eqref{QROTproblem}).

The subproblem at the $k$-th iteration ($k\geq0$) takes the following form
\begin{equation*}
\min_{X}~\delta_{\Omega^{\circ}}(X) + \langle D^k, \,X\rangle + \mu_k\mathcal{D}_{\phi}(X, \,S^k),
\end{equation*}
where $D^k\in\mathbb{R}^{m \times n}$, $S^k\in\mathbb{R}^{m \times n}$, and $\mu_k>0$ are specified in \eqref{subpro-iBPGM} or \eqref{subpro-iABPGM}. This problem is further equivalent to
\begin{equation}\label{proreform2-subpro}
\min\limits_{X}~ \langle M^k, \,X\rangle + \mu_k\,{\textstyle\sum_{ij}}x_{ij}(\log x_{ij}-1), \quad \mathrm{s.t.} \quad X\bm{e}_n=\bm{a}, ~~ X^{\top}\bm{e}_m = \bm{b},
\end{equation}
where $M^k:=D^k-\mu_k \log S^k$. Problem \eqref{proreform2-subpro} has the same form as the entropic regularized optimal transport problem and hence can be readily solved by the popular Sinkhorn's algorithm; see
\cite[Section 4.2]{pc2019computational} for more details. Specifically, let $\Xi^k:=e^{-M^k/\mu_k}$. Then, given an arbitrary initial positive vector $\bm{v}^{k,0}$, the iterative scheme is given by
\begin{equation}\label{sinkalg}
\bm{u}^{k,t} = \bm{a} ./ \big(\Xi^k\bm{v}^{k,t-1}\big), \quad
\bm{v}^{k,t} = \bm{b} ./ \big((\Xi^k)^{\top}\bm{u}^{k,t}\big),
\end{equation}
where `$./$' denotes the entrywise division between two vectors. When a pair $(\bm{u}^{k,t}, \,\bm{v}^{k,t})$ is obtained based on a certain stopping criterion, an approximate solution of \eqref{proreform2-subpro} can be recovered by setting $X^{k,t}:= \mathrm{Diag}(\bm{u}^{k,t})\,\Xi^k\,\mathrm{Diag}(\bm{v}^{k,t})$. Meanwhile, a pair of approximate dual solutions can be recovered by setting $\bm{f}^{k,t}:=\mu_k\log\bm{u}^{k,t}$ and $\bm{g}^{k,t}:=\mu_k\log\bm{v}^{k,t}$. Note that $X^{k,t}$ is in general not exactly \textit{feasible} for \eqref{QROTproblem}, namely, $X^{k,t}\notin\Omega$. Thus, a proper projection \textit{or} rounding procedure is further needed for the verification of the inexact condition. Let $\mathcal{G}_{\Omega}$ be a rounding procedure given in \cite[Algorithm 2]{awr2017near} so that $\widetilde{X}^{k,t} := \mathcal{G}_{\Omega}(X^{k,t})\in\Omega$. Then, similar to the manipulations in \textbf{Example 2} in subsection \ref{sec:examples}, we can show that for any $Y\in\Omega^{\circ}$,
\begin{equation*}
\begin{aligned}
&\quad \langle -D^k-\mu_k\,(\log X^{k,t} - \log S^k), \,Y-\widetilde{X}^{k,t}\rangle
= \langle - M^k - \mu_k \log X^{k,t}, \,Y-\widetilde{X}^{k,t}\rangle \\
&= \langle - M^k - \mu_k \log (\mathrm{Diag}(\bm{u}^{k,t})\,\Xi^k\,\mathrm{Diag}(\bm{v}^{k,t})), \,Y-\widetilde{X}^{k,t}\rangle \\
&= \langle -\mu_k\,(\log \bm{u}^{k,t})\,\bm{e}_n^{\top} - \mu_k\,\bm{e}_m\,(\log \bm{v}^{k,t})^{\top}, \,Y-\widetilde{X}^{k,t}\rangle \\
&= - \mu_k\,\langle\,\log \bm{u}^{k,t}, \,Y\bm{e}_n-\widetilde{X}^{k,t}\bm{e}_n\,\rangle
- \mu_k\,\langle\,\log \bm{v}^{k,t}, \, Y^{\top}\bm{e}_m - (\widetilde{X}^{k,t})^{\top}\bm{e}_m\,\rangle
= 0,
\end{aligned}
\end{equation*}
where the last equality follows from $Y\bm{e}_n=\bm{a}=\widetilde{X}^{k,t}\bm{e}_n$ and $Y^{\top}\bm{e}_m=\bm{b}=(\widetilde{X}^{k,t})^{\top}\bm{e}_m$. This relation implies that
\begin{equation}\label{OTincl-ent}
0\in\partial\delta_{\Omega^{\circ}}(\widetilde{X}^{k,t}) + D^k + \mu_k\big(\log X^{k,t} - \log S^k\big).
\end{equation}
From this relation, we see that our inexact condition \eqref{inexcond-iBPGM} or \eqref{inexcond-iABPGM} with the absolute-type or relative-type stopping criterion is verifiable at the pair $(X^{k,t}, \,\widetilde{X}^{k,t})$, and can be satisfied when the Bregman distance $\mathcal{D}_{\phi}(\widetilde{X}^{k,t}, \,X^{k,t})$ is smaller than a given absolute-type or relative-type tolerance as shown in \eqref{stopcond-ab} or \eqref{stopcond-re}, respectively.

\begin{remark}[\textbf{Comments on errors incurred in computation}]\label{rek-err}
Here, we remind the reader that, as discussed in deriving \eqref{OTincl-ent}, when applying Sinkhorn's algorithm for solving problem \eqref{proreform2-subpro}, no error occurs on the left-hand-side of the optimality condition
\eqref{inexcond-iBPGM} or \eqref{inexcond-iABPGM}
(i.e., $\Delta^k=0$) and the computation of $\partial \delta_{\Omega^{\circ}}$ (i.e., $\delta_k=0$). However, this outcome does depend on the specific problem and the subsolver used to solve the subproblem. In fact, different subsolvers may incur errors at different parts of the optimality condition of the subproblem. We refer the reader to an example in \cite[Section 3.2]{clty2020an} where the $\delta$-subdifferential is incurred when employing a block coordinate descent method to solve the subproblem in the entropic proximal point algorithm. Therefore, a flexible inexact condition in form of \eqref{inexcond-iBPGM} or \eqref{inexcond-iABPGM} that allows different type of errors is useful and necessary to fit different situations in practice.
\end{remark}

In view of the above, we employ Sinkhorn's algorithm as a subroutine in our iBPGM and v-iBPGM, and as guided by our inexact stopping criteria, at the $k$-th iteration ($k\geq0$), we will terminate Sinkhorn's algorithm when the following absolute-type or relative-type stopping criterion is satisfied.
\begin{itemize}
\item The absolute-type criterion:
      \begin{equation}\label{stopcond-ab}
      \mathcal{D}_{\phi}(\widetilde{X}^{k,t}, \,X^{k,t}) \leq \max\left\{\frac{\Upsilon}{(k+1)^p}, \,10^{-10}\right\}.
      \end{equation}
      Here, the coefficient $\Upsilon$ controls the initial accuracy for solving the subproblem and, together with $p$, would determine the tightness of the tolerance requirement. Generally, a small $\Upsilon$ or a large $p$ would give a tight tolerance control, which may cause excessive cost of solving each subproblem, while a large $\Upsilon$ or a small $p$ would give a loose tolerance control, which may greatly increase the number of outer iterations. Therefore, there is a tradeoff on the choices of $\Upsilon>0$ and $p>0$. In our experiments, we will choose $\Upsilon\in\{10, \,1, \,0.1, \,0.01\}$ and $p\in\{1.1, \,2.1, \,3.1\}$ for comparisons.

\item The relative-type criterion:
      \begin{equation}\label{stopcond-re}
      \mathcal{D}_{\phi}(\widetilde{X}^{k,t}, \,X^{k,t}) \leq \sigma\mathcal{D}_{\phi}(\widetilde{X}^{k,t}, \,X^{k}),
      \end{equation}
      where $0<\sigma<1$ is the tolerance parameter and $X^k$ is the iterate obtained from the previous iteration. Note from Theorems \ref{thm-fval-iBPGM-IC2} and \ref{thm-iABPGM-IC2} that the tolerance conditions $0<\sigma\leq\frac{\lambda-L}{2\lambda}$ and $0\leq\sigma\leq\frac{\lambda-\tau L}{\lambda}$ are needed for the iBPGM and v-iBPGM respectively to guarantee the corresponding convergence rates. However, as observed from our experiments, such theoretical requirements are usually too conservative to achieve good empirical performances. In our experiments, we will choose $\sigma\in\{0.999, \,0.99, \,0.9, \,0.7, \,0.5, \,0.3, \,0.1\}$ for comparisons.

\end{itemize}

For ease of future reference, in the following, we use  iBPGM-Ab/v-iBPGM-Ab and iBPGM-Re/v-iBPGM-Re to denote the iBPGM and v-iBPGM with the absolute-type and relative-type stopping criteria, respectively. In all methods, we choose $\lambda=2\nu$. Moreover, in both v-iBPGM-Ab and v-iBPGM-Re, we choose $\theta_k=\frac{\alpha-1}{k+\alpha-1}$ with $\alpha=5$ for all $k\geq1$. We initialize all methods with $X^0:=\bm{a}\bm{b}^{\top}$ and perform a \textit{warm-start} strategy to initialize Sinkhorn's algorithm. Specifically, at each iteration, we initialize Sinkhorn's algorithm with $\bm{v}^{k,0}=\bm{v}^{k-1,t}$, where $\bm{v}^{k-1,t}$ is the iterate obtained by Sinkhorn's algorithm in the previous outer iteration.

We follow \cite[Section 6.1]{yt2020bregman} to randomly generate the simulated data. Specifically, we first generate two discrete probability distributions $\big\{ (a_i, \,\bm{p}_i)\in \mathbb{R}_+\times\mathbb{R}^3 : i = 1,\cdots,m \big\}$ and $\big\{ (b_j, \,\bm{q}_j)\in \mathbb{R}_+\times\mathbb{R}^3 : j = 1,\cdots,n
\big\}$. Here, $\bm{a}:=(a_1, \cdots\!, a_{m})^{\top}$ and $\bm{b}:=(b_1, \cdots\!, b_{n})^{\top}$ are probabilities/weights, which are generated from the uniform distribution on the open interval $(0,\,1)$ and further normalized such that $\sum^{m}_ia_i=\sum^{n}_jb_j=1$. Moreover, $\{\bm{p}_i\}$ and $\{\bm{q}_j\}$ are support points whose entries are drawn from a five-component multivariate Gaussian mixture distribution, with a mean vector $(-20,10,0,10,20)^{\top}$ and a variance vector $(5,5,5,5,5)^{\top}$, using randomly generated weights. Then, the cost matrix $C$ is generated by $c_{ij}=\|\bm{p}_i-\bm{q}_j\|^2$ for $1\leq i\leq m$ and $1\leq j\leq n$ and normalized by dividing (element-wise) by its maximal entry.

Since problem \eqref{QROTproblem} is a quadratic programming (QP) problem, we also apply Gurobi (with default settings) to solve it. It is well known that Gurobi is a powerful commercial solver for QPs and is able to obtain high quality solutions. Therefore, we will use the objective function value obtained by Gurobi as the benchmark to evaluate the qualities of the solutions obtained by our methods.

\subsection{Comparison results}

In the following comparisons, we choose $m=n=200$, and generate 10 test instances with different random seeds. We will terminate the iBPGM and v-iBPGM when
\begin{equation*}
\max\left\{\Delta_{\rm kkt}(X^k,\bm{f}^k,\bm{g}^k), \,\Delta_{\rm gap}(X^k,\bm{f}^k,\bm{g}^k) \right\}
< 10^{-5},
\end{equation*}
where $X^{k}$ and $(\bm{f}^{k}, \bm{g}^{k})$ are approximate optimal solutions of problem \eqref{QROTproblem} and its corresponding dual problem, respectively. Both can be recovered from the Sinkhorn iterate. Moreover, we also terminate all methods when the number of total Sinkhorn iterations exceeds $10^5$.

Tables \ref{ResTable-nu1} and \ref{ResTable-nu001} show the average numerical performance of the iBPGM and v-iBPGM under different inexact stopping criteria for $\nu=1$ and $\nu=0.01$, respectively. In each table, we report ``\texttt{nobj}", which denotes the normalized objective function value $\big|\langle C, \,\mathcal{G}_{\Omega}(X^{k})\rangle + \frac{\nu}{2}\|\mathcal{G}_{\Omega}(X^{k})\|_F^2-f^*\big|\,/\,|f^*|$ with $f^*$ being the highly accurate optimal function value computed by Gurobi, the number of outer iterations (denoted by ``\texttt{out}\#"), the number of total Sinkhorn iterations (denoted by ``\texttt{sink}\#"), and the computational time in seconds (denoted by ``\texttt{time}"). From the results, we have several observations as follows.

\begin{itemize}[leftmargin=0.5cm]
\item \textbf{(Overall performance vs Inexactness)} The overall performance (namely, ``\texttt{nobj}" vs ``\texttt{sink}\#") of each method is unsurprisingly affected by the choice of $(\Upsilon,p)$ in \eqref{stopcond-ab} or $\sigma$ in \eqref{stopcond-re}, which determines the tightness of the inexact tolerance requirement. Generally, a larger $\Upsilon$ with a smaller $p$ or a larger $\sigma$ would lead to a larger number of outer iterations, since the stopping criterion \eqref{stopcond-ab} or \eqref{stopcond-re} is easier to satisfy. However, it does not always yield good overall performances. For example, when $\nu=0.01$, both iBPGM and v-iBPGM with $(\Upsilon,p)=(10, 1.1)$ or $\sigma=0.999$ require more outer iterations and Sinkhorn iterations, while giving a worse \texttt{nobj}. This somewhat aligns with our theoretical results, where the convergence (rate) is only guaranteed under certain requirements on the inexact errors, though the current theoretical requirements appear to be conservative. On the other hand, choosing $p=3.1$ (especially with $\Upsilon=0.01$) or $\sigma=0.1$ results in the fastest tolerance decay, but does not provide the best overall performance for any method, mainly due to the excessive cost of solving each subproblem more accurately. This indicates that a tighter tolerance requirement for solving the subproblem does not necessarily result in better overall efficiency and there is a trade-off between the number of outer and inner iterations.

\item \textbf{(Inertial version vs Non-inertial version)} One can also observe that  v-iBPGM-Ab/v-iBPGM-Re can outperform iBPGM-Ab/iBPGM-Re, requiring fewer outer iterations, particularly for a larger $\nu$ (e.g., $\nu=1$). This empirically verifies the improved convergence rate of v-iBPGM-Ab/v-iBPGM-Re under similar tolerance requirements for iBPGM-Ab/iBPGM-Re, as expected from Theorems \ref{thm-iABPGM-IC1} and \ref{thm-iABPGM-IC2} with $\gamma=2$ for the QROT problem. However, the improvement becomes less significant as $\nu$ decreases. This is likely because, when $\nu$ is smaller, along with the gradual decrease of $\theta_k$, the proximal parameter $\lambda\theta_k$ (with $\lambda=2\nu$ in our setting) in \eqref{subpro-iABPGM} becomes very small, making the subproblem more challenging to solve using Sinkhorn's algorithm and thus necessitating more Sinkhorn iterations for solving each subproblem. This implies that the true performance of the inertial variant may depend on the difficulty of the subproblem and the efficiency of the subsolver for solving the subproblem in different scenarios.

\item \textbf{(Absolute criterion vs Relative criterion)} Finally, one can see that, with proper choices of tolerance parameters,  iBPGM-Ab (resp., v-iBPGM-Ab) and  iBPGM-Re (resp., v-iBPGM-Re) can be comparable in terms of the total number of Sinkhorn iterations. This is indeed reasonable because  iBPGM-Ab (resp., v-iBPGM-Ab) and  iBPGM-Re (resp., v-iBPGM-Re) essentially use the same iBPGM (resp., v-iBPGM) framework but with different stopping criteria for solving the subproblems. Note that  iBPGM-Re and v-iBPGM-Re involve only a \textit{single} tolerance parameter $\sigma$. Therefore, they are typically more user-friendly for parameter tuning compared to  iBPGM-Ab and v-iBPGM-Ab, which require a summable error-tolerance sequence $\{\varepsilon_k\}$ for achieving good practical efficiency. Specifically, the absolute-type criterion in \eqref{stopcond-ab} requires the careful tuning of the two parameters $\Upsilon$ and $p$ for good performance. However, we should also note that  iBPGM-Re and v-iBPGM-Re incur additional overhead (e.g., the computation of $\mathcal{D}_{\phi}(\widetilde{X}^{k,t}, \,X^{k})$ on the right-hand side of \eqref{stopcond-re}), when checking the relative-type stopping criterion. This is why  iBPGM-Re and v-iBPGM-Re may take more computational time than  iBPGM-Ab and v-iBPGM-Ab, despite using a similar number of total Sinkhorn iterations.

\end{itemize}

In summary, allowing the subproblem to be solved approximately is both necessary and important for the BPGM and its inertial variant to be practical and implementable. Different methods with different types of stopping criteria have different inherent inexactness tolerance requirements. The study of such phenomena can deepen our understanding of these methods under different inexact stopping criteria, which can further guide us to choose a proper method with a suitable inexact stopping criterion in practical implementations.

\begin{table}[ht]
\setlength{\belowcaptionskip}{7pt}
\renewcommand\arraystretch{1.15}
\caption{Comparisons between iBPGM and v-iBPGM under different choices of tolerance parameters for $\nu=1$. In the table, ``\texttt{nobj}" denotes the normalized objective function value, ``$\texttt{out}\#$" denotes the number of outer iterations, ``$\texttt{sink}\#$" denotes the the number of Sinkhorn iterations, and ``--" means that the number of total Sinkhorn iterations reaches $10^5$.}\label{ResTable-nu1}
\centering \tabcolsep 10pt
\small{
\begin{tabular}{lllllllll}
\toprule
&\multicolumn{4}{c}{iBPGM-Ab}&\multicolumn{4}{c}{v-iBPGM-Ab}   \\
\cmidrule(l){1-1} \cmidrule(l){2-5} \cmidrule(l){6-9}
$(\Upsilon,p)$
& \texttt{nobj} & \texttt{out}\# & \texttt{sink}\# & \texttt{time}
& \texttt{nobj} & \texttt{out}\# & \texttt{sink}\# & \texttt{time}  \\
\cmidrule(l){1-1} \cmidrule(l){2-5} \cmidrule(l){6-9}
(10, \,1.1) & 2.14e-04 & 6342 & 12685 & 6.54 & 4.93e-04 & 337 & 674 & 0.35 \\
(10, \,2.1) & 2.14e-04 & 6342 & 12685 & 6.45 & 4.93e-04 & 337 & 674 & 0.35 \\
(10, \,3.1) & 2.86e-03 & 2348 & -- & 20.50 & 4.97e-04 & 334 & 962 & 0.41 \\
(1, \,1.1) & 2.14e-04 & 6342 & 12685 & 6.45 & 4.93e-04 & 337 & 674 & 0.34 \\
(1, \,2.1) & 2.14e-04 & 6342 & 14214 & 6.85 & 4.96e-04 & 328 & 2946 & 0.87 \\
(1, \,3.1) & 9.47e-03 & 1479 & -- & 20.38 & 4.92e-04 & 326 & 60299 & 14.13 \\
(0.1, \,1.1) & 2.14e-04 & 6342 & 12685 & 6.46 & 4.97e-04 & 336 & 691 & 0.35 \\
(0.1, \,2.1) & 2.14e-04 & 6327 & 70412 & 21.72 & 4.92e-04 & 327 & 50474 & 11.78 \\
(0.1, \,3.1) & 1.86e-02 & 1172 & -- & 20.38 & 3.49e-03 & 218 & 99836 & 22.22 \\
(0.01, \,1.1) & 2.14e-04 & 6342 & 12685 & 6.45 & 4.93e-04 & 328 & 2888 & 0.87 \\
(0.01, \,2.1) & 2.34e-03 & 2603 & -- & 20.78 & 2.01e-03 & 247 & 97855 & 22.10 \\
(0.01, \,3.1) & 2.65e-02 & 1079 & -- & 20.40 & 4.26e-03 & 211 & -- & 22.52 \\
\bottomrule  \vspace{-2mm} \\
\toprule
&\multicolumn{4}{c}{iBPGM-Re}&\multicolumn{4}{c}{v-iBPGM-Re}   \\
\cmidrule(l){1-1}\cmidrule(l){2-5}\cmidrule(l){6-9}
$\sigma$
& \texttt{nobj} & \texttt{out}\# & \texttt{sink}\# & \texttt{time}
& \texttt{nobj} & \texttt{out}\# & \texttt{sink}\# & \texttt{time}  \\
\cmidrule(l){1-1}\cmidrule(l){2-5}\cmidrule(l){6-9}
0.999 & 2.14e-04 & 6342 & 12685 & 7.76 & 4.93e-04 & 337 & 674 & 0.41 \\
0.99  & 2.14e-04 & 6342 & 12685 & 7.67 & 4.96e-04 & 335 & 942 & 0.49 \\
0.9   & 2.14e-04 & 6342 & 15784 & 9.12 & 4.94e-04 & 329 & 2860 & 1.07 \\
0.7   & 2.14e-04 & 6342 & 36164 & 17.09 & 4.98e-04 & 327 & 6780 & 2.27 \\
0.5   & 2.14e-04 & 6311 & 65559 & 28.29 & 4.95e-04 & 327 & 11545 & 3.69 \\
0.3   & 2.14e-04 & 5817 & 94225 & 37.16 & 4.93e-04 & 327 & 18909 & 5.89 \\
0.1   & 7.71e-04 & 3762 & -- & 32.54 & 4.92e-04 & 327 & 37350 & 11.36 \\
\bottomrule
\end{tabular}}
\end{table}

\begin{table}[ht]
\setlength{\belowcaptionskip}{7pt}
\captionsetup{width=14cm}
\renewcommand\arraystretch{1.15}
\caption{Same as Table \ref{ResTable-nu1} but for $\nu=0.01$.}\label{ResTable-nu001}
\centering \tabcolsep 10pt
\small{
\begin{tabular}{lllllllll}
\toprule
&\multicolumn{4}{c}{iBPGM-Ab}&\multicolumn{4}{c}{v-iBPGM-Ab}   \\
\cmidrule(l){1-1} \cmidrule(l){2-5} \cmidrule(l){6-9}
$(\Upsilon,p)$
& \texttt{nobj} & \texttt{out}\# & \texttt{sink}\# & \texttt{time}
& \texttt{nobj} & \texttt{out}\# & \texttt{sink}\# & \texttt{time}  \\
\cmidrule(l){1-1}\cmidrule(l){2-5}\cmidrule(l){6-9}
(10, \,1.1) & 6.61e-04 & 1453 & 2945 & 1.18 & 5.33e-04 & 4349 & 80801 & 47.22 \\
(10, \,2.1) & 6.39e-04 & 143 & 1283 & 0.33 & 5.32e-04 & 387 & 15542 & 11.35 \\
(10, \,3.1) & 3.53e-04 & 137 & 14450 & 3.09 & 5.32e-04 & 148 & 13321 & 9.19 \\
(1, \,1.1) & 6.98e-04 & 419 & 1169 & 0.41 & 5.33e-04 & 755 & 15035 & 10.77 \\
(1, \,2.1) & 3.93e-04 & 138 & 3069 & 0.70 & 5.54e-04 & 53 & 4903 & 1.91 \\
(1, \,3.1) & 3.48e-04 & 135 & 56758 & 11.94 & 5.84e-04 & 40 & 23213 & 5.24 \\
(0.1, \,1.1) & 5.39e-04 & 149 & 851 & 0.24 & 5.31e-04 & 84 & 4426 & 2.27 \\
(0.1, \,2.1) & 3.53e-04 & 137 & 15236 & 3.25 & 5.85e-04 & 38 & 38850 & 8.80 \\
(0.1, \,3.1) & 3.93e-04 & 117 & 97516 & 20.37 & 5.79e-04 & 38 & 78862 & 17.83 \\
(0.01, \,1.1) & 3.96e-04 & 137 & 3564 & 0.81 & 5.85e-04 & 41 & 9191 & 2.15 \\
(0.01, \,2.1) & 3.48e-04 & 134 & 60421 & 12.62 & 5.79e-04 & 38 & 78229 & 17.65 \\
(0.01, \,3.1) & 7.01e-04 & 89 & -- & 20.95 & 5.79e-04 & 38 & 90368 & 20.38 \\
\bottomrule  \vspace{-2mm} \\
\toprule
&\multicolumn{4}{c}{iBPGM-Re}&\multicolumn{4}{c}{v-iBPGM-Re}   \\
\cmidrule(l){1-1} \cmidrule(l){2-5} \cmidrule(l){6-9}
$\sigma$
& \texttt{nobj} & \texttt{out}\# & \texttt{sink}\# & \texttt{time}
& \texttt{nobj} & \texttt{out}\# & \texttt{sink}\# & \texttt{time}  \\
\cmidrule(l){1-1}\cmidrule(l){2-5}\cmidrule(l){6-9}
0.999 & 5.21e-04 & 303 & 7791 & 2.27 & 2.07e-02 & 3955 & 45975 & 32.72 \\
0.99 & 4.77e-04 & 142 & 2038 & 0.67 & 7.84e-02 & 1011 & 31514 & 23.82 \\
0.9 & 3.63e-04 & 135 & 3789 & 1.20 & 5.33e-04 & 60 & 8803 & 5.01 \\
0.7 & 3.55e-04 & 137 & 9990 & 3.07 & 5.33e-04 & 47 & 10265 & 4.36 \\
0.5 & 3.51e-04 & 137 & 16961 & 5.12 & 5.83e-04 & 43 & 11491 & 3.61 \\
0.3 & 3.49e-04 & 136 & 29350 & 8.78 & 5.84e-04 & 41 & 16765 & 4.95 \\
0.1 & 3.48e-04 & 134 & 59866 & 17.82 & 5.66e-04 & 40 & 31262 & 9.51 \\
\bottomrule
\end{tabular}}
\end{table}

\section{Concluding remarks}\label{seccon}

In this paper, we develop an inexact Bregman proximal gradient  method  (iBPGM)
based on two types of two-point inexact stopping criteria, and establish the $\mathcal{O}(1/k)$ convergence rate as well as the convergence of the sequence under some proper conditions. To improve the convergence speed, we further develop an inertial variant of our iBPGM (denoted by v-iBPGM) and show that it has the $\mathcal{O}(1/k^{\gamma})$ convergence rate, where $\gamma\geq1$ is a restricted relative smoothness exponent. Thus, when $\gamma>1$, the v-iBPGM can improve the $\mathcal{O}(1/k)$ convergence rate of the iBPGM. We also conduct some preliminary numerical experiments for solving the discrete quadratic regularized optimal transport problem to show the convergence behaviors of the iBPGM and v-iBPGM under different inexactness settings.

Note that the current iterative frameworks of our iBPGM and v-iBPGM require the knowledge of the relative smooth constant $L$ to determine the proximal parameter $\lambda$ in the subproblems \eqref{subpro-iBPGM}. However, this may not always be achievable, or the constant $L$ can be very large, leading to poor practical performance. A possible solution is to incorporate an appropriate backtracking or line-search procedure (see, for example, \cite{bt2009a,hrx2018accelerated,s2017variable,y2024proximal}) into our iterative frameworks. However, integrating such a procedure with our inexact stopping criteria would also require additional effort. We will leave it as a future research topic.

\section*{Acknowledgments}

We thank the editor and referees for their valuable suggestions and comments, which have helped to improve the quality of this paper.

\begin{appendices}

\section{Proofs of supporting lemmas}\label{apd-lemmas}

\subsection{Proof of Lemma \ref{lemseqcon2}}

For any $\varepsilon>0$, we have from $\sum^{\infty}_{k=0}\alpha_k<\infty$ that there exists $\widetilde{N}\geq0$ such that $\sum^{\infty}_{k=\widetilde{N}}\alpha_k\leq\varepsilon/2$. Now, let $N_{\varepsilon}:=\max\left\{\widetilde{N}+1, \,2\varepsilon^{-1}\sum^{\widetilde{N}-1}_{i=0}i\alpha_i\right\}$. Then, for any $k\geq N_{\varepsilon}$, we see that
\begin{equation*}
\frac{1}{k}\sum^{k-1}_{i=0}i\alpha_i
= \frac{1}{k}\sum^{\widetilde{N}-1}_{i=0}i\alpha_i
+ \frac{1}{k}\sum^{k-1}_{i=\widetilde{N}}i\alpha_i
\leq \frac{1}{N_{\varepsilon}}\sum^{\widetilde{N}-1}_{i=0}i\alpha_i
+ \sum^{k-1}_{i=\widetilde{N}}\alpha_i \leq \varepsilon.
\end{equation*}
Since $\varepsilon$ is arbitrary, we can conclude that $\frac{1}{k}\sum^{k-1}_{i=0}i\alpha_i\to0$.

\subsection{Proof of Lemma \ref{supplem}}

First, from condition \eqref{inexcond-gen}, there exists $\bm{d} \in \partial_{\delta}P(\widetilde{\bm{x}}^*)$ such that $\bm{d} + \nabla f(\bm{y}) + \lambda(\nabla \phi(\bm{x}^*)-\nabla \phi(\bm{y})) = \Delta$. Then, for any $\bm{x}\in\mathrm{dom}\,P\cap\mathrm{dom}\,\phi$, we see that
\begin{equation*}
P(\bm{x})
\geq P(\widetilde{\bm{x}}^*) + \langle \bm{d}, \,\bm{x} - \widetilde{\bm{x}}^* \rangle - \delta
= P(\widetilde{\bm{x}}^*) + \langle \Delta - \nabla f(\bm{y}) - \lambda(\nabla \phi(\bm{x}^*) - \nabla \phi(\bm{y})), \,\bm{x} - \widetilde{\bm{x}}^*\rangle - \delta,
\end{equation*}
which implies that
\begin{equation}\label{ineq1-1-gen}
P(\widetilde{\bm{x}}^*)
\leq P(\bm{x}) - \langle \nabla f(\bm{y}), \,\widetilde{\bm{x}}^* - \bm{x} \rangle
+ \lambda\langle \nabla \phi(\bm{x}^*)-\nabla \phi(\bm{y}), \,\bm{x} - \widetilde{\bm{x}}^*\rangle + |\langle \Delta,\,\widetilde{\bm{x}}^* - \bm{x} \rangle| + \delta.
\end{equation}
Note from the four-points identity \eqref{fourId} that
\begin{equation}\label{ineq1-2-gen}
\begin{aligned}
\langle \,\nabla\phi(\bm{x}^*) - \nabla\phi(\bm{y}), \,\bm{x}-\widetilde{\bm{x}}^*\,\rangle
=\mathcal{D}_{\phi}(\bm{x},\,\bm{y})
-\mathcal{D}_{\phi}(\bm{x},\,\bm{x}^*) -\mathcal{D}_{\phi}(\widetilde{\bm{x}}^*,\,\bm{y})
+\mathcal{D}_{\phi}(\widetilde{\bm{x}}^*,\,\bm{x}^*).
\end{aligned}
\end{equation}
Thus, combining \eqref{ineq1-1-gen} and \eqref{ineq1-2-gen}, we obtain that
\begin{equation}\label{ineq1-gen}
\begin{aligned}
P(\widetilde{\bm{x}}^*)
&\leq P(\bm{x}) - \langle \nabla f(\bm{y}), \,\widetilde{\bm{x}}^* - \bm{x} \rangle
+ \lambda\mathcal{D}_{\phi}(\bm{x},\,\bm{y})
- \lambda\mathcal{D}_{\phi}(\bm{x},\,\bm{x}^*)
- \lambda\mathcal{D}_{\phi}(\widetilde{\bm{x}}^*,\,\bm{y}) \\
&\qquad + \lambda\mathcal{D}_{\phi}(\widetilde{\bm{x}}^*,\,\bm{x}^*)
+ |\langle \Delta, \,\widetilde{\bm{x}}^* - \bm{x} \rangle| + \delta.
\end{aligned}
\end{equation}
On the other hand, since $f$ is $L$-smooth relative to $\phi$ restricted on $\mathcal{X}$ (by Assumption \ref{assumA}3) and $f$ is convex with $\mathrm{dom}\,f\supseteq\mathrm{dom}\,\phi$, then
\begin{equation*}
\begin{aligned}
f(\widetilde{\bm{x}}^*) &\leq f(\bm{y}) + \langle \nabla f(\bm{y}), \,\widetilde{\bm{x}}^*-\bm{y}\rangle + L\mathcal{D}_{\phi}(\widetilde{\bm{x}}^*,\,\bm{y}), \\
f(\bm{x}) &\geq f(\bm{y}) + \langle \nabla f(\bm{y}), \,\bm{x}-\bm{y}\rangle.
\end{aligned}
\end{equation*}
Summing the above two inequalities, we obtain that
\begin{equation}\label{ineq2-gen}
f(\widetilde{\bm{x}}^*) \leq f(\bm{x}) + \langle \nabla f(\bm{y}), \,\widetilde{\bm{x}}^*-\bm{x}\rangle + L\mathcal{D}_{\phi}(\widetilde{\bm{x}}^*,\,\bm{y}).
\end{equation}
Thus, summing \eqref{ineq1-gen} and \eqref{ineq2-gen}, one can obtain the desired result.

\subsection{Proof of Lemma \ref{lem-thetasumbd}}

We first show by induction that $\theta_k\geq\frac{1}{k+\gamma}$ for all $k\geq0$. This obviously holds for $k=0$ since $\theta_0=1$. Suppose that $\theta_k\geq\frac{1}{k+\gamma}$ holds for some $k\geq0$. Since $0<\theta_k\leq1$ and $\gamma\geq1$, then $(1-\theta_{k})^{\gamma}\leq1-\theta_{k}$. Using this and \eqref{condtheta}, we see that
\begin{equation*}
\frac{(1-\theta_{k+1})^{\gamma}}{\theta_{k+1}^{\gamma}} \leq \frac{1}{\theta_{k}^{\gamma}}
\quad \Longleftrightarrow \quad
\frac{1-\theta_{k+1}}{\theta_{k+1}} \leq \frac{1}{\theta_{k}},
\end{equation*}
from which, we have that $\theta_{k+1}\geq\frac{\theta_k}{\theta_k+1}=\frac{1}{k+1+\gamma}$. This completes the induction. Using this fact, we further get
\begin{equation*}
\begin{aligned}
&\quad \sum^{k}_{i=0}\frac{1}{\theta_{i-1}^{\gamma}} - \frac{1-\theta_{i}}{\theta_{i}^{\gamma}}
= \frac{1}{\theta_{-1}^{\gamma}} - \frac{1}{\theta_{k}^{\gamma}} + \sum^{k}_{i=0}\frac{1}{\theta_{i}^{\gamma-1}}
\leq 2 + \sum^{k}_{i=1}\frac{1}{\theta_{i}^{\gamma-1}} \\
&\leq 2 + \sum^{k}_{i=1}(i+\gamma)^{\gamma-1}
\leq 2 + \int^{k+1}_{0}(t+\gamma)^{\gamma-1}\mathrm{d}t
\leq 2 + \frac{(k+1+\gamma)^{\gamma}}{\gamma}.
\end{aligned}
\end{equation*}
This completes the proof.

\end{appendices}

\bibliographystyle{plain}
\bibliography{references/Ref_iBPG}

\end{document}